\newtheorem{theorem}{Theorem}[section]
\newtheorem{lemma}[theorem]{Lemma}
\theoremstyle{remark}
\newtheorem{remark}[theorem]{\bf Remark}
\def\NN{\mathds{N}}
\def\RR{\mathbb{R}}
\def\QQ{\mathbb{Q}}
\def\CC{\mathbb{C}}
\def\ZZ{\mathbb{Z}}
\def\kk{\mathds{k}}
\begin{document}
	
	\def\NN{\mathbb{N}}
	\def\RR{\mathds{R}}
	\def\HH{I\!\! H}
	\def\QQ{\mathbb{Q}}
	\def\CC{\mathds{C}}
	\def\ZZ{\mathbb{Z}}
	\def\DD{\mathds{D}}
	\def\OO{\mathcal{O}}
	\def\kk{\mathds{k}}
	\def\KK{\mathbb{K}}
	\def\ho{\mathcal{H}_0^{\frac{h(d)}{2}}}
	\def\LL{\mathbb{L}}
	\def\L{\mathds{k}_2^{(2)}}
	\def\M{\mathds{k}_2^{(1)}}
	\def\k{\mathds{k}^{(*)}}
	\def\l{\mathds{L}}

	\selectlanguage{english}
	%\title[ The unit groups of real triquadratic   fields ]{ The unit groups of real triquadratic number fields and applications}
	\title[ Arithmetic  of some real triquadratic   fields... ]{ Arithmetic  of some real triquadratic   fields; Units and 2-class groups}
	%premier auteur
	
	\author[M. M. Chems-Eddin]{Mohamed Mahmoud Chems-Eddin}
	\address{Mohamed Mahmoud Chems-Eddin: Sidi Mohamed Ben Abdellah University,   
		Faculty of Sciences Dhar El Mahraz, Departement of Mathematics, Fez, Morocco }
	\email{2m.chemseddin@gmail.com}

	\subjclass[2010]{11R04, 11R27, 11R29, 11R37.}
	\keywords{Real multiquadratic   fields,  unit group, $2$-class group, $2$-class  number.}

	\begin{abstract}
		In this paper, we   compute the unit groups and the $2$-class numbers of the Fr\"ohlich's    triquadratic fields $\KK=\mathbb{Q}(\sqrt{2},\sqrt{p},\sqrt{q})$,
		where  $p$  and $q$  are   two prime  numbers such that ($p\equiv 1 \pmod8$ and $q\equiv 3 \pmod4$) or  ($p\equiv 5$ or $3 \pmod8$  and $q\equiv 3 \pmod4$). Furthermore,   we determine some families of  the fields $\KK$ whose $2$-class groups are trivial or cyclic non trivial, and some other families with $2$-class groups   isomorphic to the Klein group.
	\end{abstract}
	
	\selectlanguage{english}
	
	\maketitle
	%	\tableofcontents

	\section{\bf Introduction}
	A Fr\"ohlich multiquadratic    field of degree $2^n$ is a real multiquadratic field of the form $F_n=\mathbb{Q}(\sqrt{p_1},\sqrt{p_2},...,\sqrt{p_n})$, where the $p_i$'s are prime numbers.
	These fields are of major interest in   class field theory  and genus theory of quadratic and biquadratic fields. Their study has a long history, and here we shall quote   some 
	works which are related to the subject of this paper.  
	In the best of our knowledge, when $n\geq 3$, all the facts that we have about the class groups of these  fields concern the cyclicity of their $2$-class groups and the parity of their class numbers. For example, in \cite{FrohlichCentral},
	Fr\"ohlich  showed that if more than four finite primes are ramified in a finite extension $K/\mathbb{Q}$, then the class number of $K$ is even and therefore $F_n$, with $n\geq 5$ 
	has an even class number. The parity of the class number of the quadratic field (i.e. $F_1$) can be determined using genus theory. The biquadratic field (i.e. $F_2$) was studied by 
	Fr\"ohlich \cite{FrohlichCentral}, Conner and  Hurrelbrink \cite{connor88} and  Ku{\v{c}}era \cite{kuvcera1995parity}. The parity of the class numbers of Fr\"ohlich  fields of degree $8$  (i.e. $F_3$) was studied by Bulant \cite{Bulant} who used the method of Ku{\v{c}}era which is based on circular units. Furthermore,  the authors of \cite{MouhibMouvahhedi} determined a list of the
	fields $F_3$ with $p_i\equiv 3\pmod 4$  whose $2$-class groups are cyclic non trivial. Finally, the parity of the class number of $F_4$, was   investigated  in   \cite{MouhibParity}. 
	We believe that after   this  list of interesting works, it  is  time to go further and discover more  and different arithmetical properties of these fields. 
	
	In the present paper, we provide the unit groups and the  $2$-class numbers  of the Fr\"ohlich field $F_3:=\KK=\mathbb{Q}(\sqrt{2},\sqrt{p},\sqrt{q})$, where $p$ and $q$ are two prime numbers such that ($p\equiv 1 \pmod8$ and $q\equiv 3 \pmod4$) or  ($p\equiv 5$ or $3 \pmod8$  and $q\equiv 3  \pmod4$). Furthermore, we shall give some   families  of   $\KK$ with $2$-class groups   of type $(2,2)$. Note that the reason behind choosing this form comes from our expertise and previous studies which showed the importance of these fields in the study of many problems of class field theory and genus theory related to biquadratic and triquadratic fields \cite{chemszekhniniaziziUnits1,ChemsUnits9}. Note also that the fields $\KK$ represent the first step of the cyclotomic $\mathbb{Z}_2$-extension of the fields $\mathbb{Q}( \sqrt{p},\sqrt{q})$ and our results may also be 
	very useful for studying some problems related to Iwasawa theory on biquadratic and triquadratic fields 
	%the fields $\mathbb{Q}( \sqrt{p},\sqrt{q})$ and $\mathbb{Q}( \sqrt{p},\sqrt{q}, i)$ which we shall expose in a forthcoming paper
	(see \cite[Theorem 3.6]{ChemsUnits9} for a  direct example of such applications).
	
	The plan of this paper is as follows; In Sec. 2, we   collect some preliminary results   which we shall use  later. In Sec. 3, we provide unit groups and $2$-class numbers of the Fr\"ohlich fields $ \KK=\mathbb{Q}(\sqrt{2},\sqrt{p},\sqrt{q})$. Therein we give some families whose $2$-class groups are trivial or cyclic non trivial. In the last section, we provide some families of Fr\"ohlich fields whose  $2$-class groups are of type $(2,2)$.
	\section*{Notations}
	Let $k$ be a number field. We shall use  the following notations for the rest of this paper:
	
	\begin{enumerate}[$\star$]

		\item $E_k$: The unit group of $k$,
		
		\item $q(k)=(E_{k}: \prod_{i}E_{k_i})$ is the unit index of $k$, if $k$ is multiquadratic, where   $k_i$ are  the  quadratic subfields	of $k$,
		\item $h(k)$: The  class number of  $k$,
		\item $h_2(k)$: The $2$-class number of $k$,
		\item $h_2(d)$: The $2$-class number of a quadratic field $\mathbb{Q}(\sqrt{d})$,
		\item $\varepsilon_d$: The fundamental unit of a real quadratic field $\mathbb{Q}(\sqrt{d})$,
		%	\item $x$, $y$: Two integers such that 	$ \varepsilon_{2pq}=x+y\sqrt{2pq}$,
		%\item $v$, $w$: Two integers such that 	$ \varepsilon_{pq}=v+w\sqrt{pq}$,
		\item $N(\varepsilon_d)$: The norm of   $\varepsilon_d$ in the extension $\mathbb{Q}(\sqrt{d})/\mathbb{Q}$,
		\item $\tau_i$: Defined in Page \pageref{fsu preparations},
		\item$k_i$: Defined in Page \pageref{fsu preparations},
		\item $u$: Defined in Lemma \ref{lm noms esp_2p},
		\item $\left(\frac{\cdot}{\cdot}\right)$: The Legendre symbol.
		%	\item $N_{k'/k}$: The norm map of an extension $k'/k$.
	\end{enumerate}
	\section{\bf Preparations} \label{sec2prep}	
	Let us start this section by  recalling the method given in    \cite{wada}, that describes a fundamental system  of units of a real  multiquadratic field $K_0$. Let  $\sigma_1$ and 
	$\sigma_2$ be two distinct elements of order $2$ of the Galois group of $K_0/\mathbb{Q}$. Let $K_1$, $K_2$ and $K_3$ be the three subextensions of $K_0$ invariant by  $\sigma_1$,
	$\sigma_2$ and $\sigma_3= \sigma_1\sigma_2$, respectively. Let $\varepsilon$ denote a unit of $K_0$. Then \label{algo wada}
	$$\varepsilon^2=\varepsilon\varepsilon^{\sigma_1}  \varepsilon\varepsilon^{\sigma_2}(\varepsilon^{\sigma_1}\varepsilon^{\sigma_2})^{-1},$$
	and we have, $\varepsilon\varepsilon^{\sigma_1}\in E_{K_1}$, $\varepsilon\varepsilon^{\sigma_2}\in E_{K_2}$  and $\varepsilon^{\sigma_1}\varepsilon^{\sigma_2}\in E_{K_3}$.
	It follows that the unit group of $K_0$  
	is generated by the elements of  $E_{K_1}$, $E_{K_2}$ and $E_{K_3}$, and the square roots of elements of   $E_{K_1}E_{K_2}E_{K_3}$ which are perfect squares in $K_0$.
	
	This method is very useful for computing a fundamental system of units of a real biquadratic number field, however, in the case of a real triquadratic 
	number field the problem of the determination of the unit group becomes very difficult and demands some specific computations and eliminations, as we will see in the next section.
	We shall consider the field $\KK=\mathbb{Q}(\sqrt{2},\sqrt{p},\sqrt{q})$, where $p$ and $q$ are two distinct prime numbers. Thus, we have the following diagram:

	\begin{figure}[H]
		$$\xymatrix@R=0.8cm@C=0.3cm{
			&\KK=\QQ( \sqrt 2, \sqrt{p}, \sqrt{q})\ar@{<-}[d] \ar@{<-}[dr] \ar@{<-}[ld] \\
			k_1=\QQ(\sqrt 2,\sqrt{p})\ar@{<-}[dr]& k_2=\QQ(\sqrt 2, \sqrt{q}) \ar@{<-}[d]& k_3=\QQ(\sqrt 2, \sqrt{pq})\ar@{<-}[ld]\\
			&\QQ(\sqrt 2)}$$
		\caption{Intermediate fields of $\KK/\QQ(\sqrt 2)$}\label{fig:I}
	\end{figure}
	Let $\tau_1$, $\tau_2$ and $\tau_3$ be the elements of  $ \mathrm{Gal}(\KK/\QQ)$ defined by
	\begin{center}	\begin{tabular}{l l l }
			$\tau_1(\sqrt{2})=-\sqrt{2}$, \qquad & $\tau_1(\sqrt{p})=\sqrt{p}$, \qquad & $\tau_1(\sqrt{q})=\sqrt{q},$\\
			$\tau_2(\sqrt{2})=\sqrt{2}$, \qquad & $\tau_2(\sqrt{p})=-\sqrt{p}$, \qquad &  $\tau_2(\sqrt{q})=\sqrt{q},$\\
			$\tau_3(\sqrt{2})=\sqrt{2}$, \qquad &$\tau_3(\sqrt{p})=\sqrt{p}$, \qquad & $\tau_3(\sqrt{q})=-\sqrt{q}.$
		\end{tabular}
	\end{center}
	Note that  $\mathrm{Gal}(\KK/\QQ)=\langle \tau_1, \tau_2, \tau_3\rangle$
	and the subfields  $k_1$, $k_2$ and $k_3$ are
	fixed by  $\langle \tau_3\rangle$, $\langle\tau_2\rangle$ and $\langle\tau_2\tau_3\rangle$ respectively. Therefore,\label{fsu preparations} a fundamental system of units  of $\KK$ consists  of seven  units chosen from those of $k_1$, $k_2$ and $k_3$, and  from the square roots of the elements of $E_{k_1}E_{k_2}E_{k_3}$ which are squares in $\KK$.   
	With these   notations, we have:

	\begin{lemma}\label{lm noms esp_2p}
		Let $p$ be a prime number such that $N(\varepsilon_{2p})=1$. Put $\varepsilon_{2p}=\beta+\alpha\sqrt{2p}$ with $\beta , \alpha\in\ZZ$.  Then  $\sqrt{\varepsilon_{2p}}=\frac{1}{\sqrt2}(\alpha_1+\alpha_2\sqrt{ 2p})$, for some integers $\alpha_1, \alpha_2$ such that $\alpha=\alpha_1 \alpha_2$. It follows that: 
		\begin{eqnarray}\label{T_3_-1_eqi2p_N=1} 
			\begin{tabular}{ |c|c|c|c|c|c|c|c|c|}
				\hline
				$  \sigma $               &$1+\tau_2$         &$1+\tau_1\tau_2$    &$1+\tau_1\tau_3$         &$1+\tau_2\tau_3$&    $1+\tau_1$      \\
				\hline
				$\sqrt{\varepsilon_{2p}}^\sigma$&$(-1)^{u}$           &$-\varepsilon_{2p}$                &	$(-1)^{u+1}$                 &$(-1)^u$  &   $(-1)^{u+1}$                     \\
				\hline
			\end{tabular} 
		\end{eqnarray}
		for some $u$   in $\{0, 1\}$ such that $\frac 12(\alpha_1^2-2p\alpha_2^2)=(-1)^u$\label{the int u}.	With   ${\varepsilon}^{\sigma+\tau}:=\sigma({\varepsilon})\tau({\varepsilon})$, for any $\sigma$, $\tau\in \mathrm{Gal}(\KK/\QQ)$ and for any $\varepsilon\in \KK$.
		
		%	********** to add the case N(epq)=-1.****
	\end{lemma}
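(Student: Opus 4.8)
The plan is to prove the two assertions in turn: first determine the exact shape of $\sqrt{\varepsilon_{2p}}$, and then obtain the table by a direct Galois computation from that shape. For the shape, I would start from the norm-one relation $\beta^2-2p\alpha^2=1$, rewritten as $(\beta-1)(\beta+1)=2p\alpha^2$. A parity check shows $\beta$ must be odd: if $\beta$ were even then $\beta^2-1\equiv 3\pmod 4$, whereas $2p\alpha^2$ is always $\equiv 0$ or $2\pmod 4$. With $\beta$ odd one has $\beta^2-1\equiv 0\pmod 8$, and since $p$ is odd this forces $\alpha$ to be even, say $\alpha=2\gamma$. Setting $r=\tfrac{\beta-1}{2}$ and $s=\tfrac{\beta+1}{2}$, these are coprime integers with $s-r=1$ and $rs=2p\gamma^2$. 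Unique factorization together with $\gcd(r,s)=1$ distributes the squarefree part $2p$ between $r$ and $s$, leaving exactly four configurations $\{r,s\}=\{g_1^2,2pg_2^2\}$, $\{2pg_1^2,g_2^2\}$, $\{2g_1^2,pg_2^2\}$ or $\{pg_1^2,2g_2^2\}$, with $\gamma=g_1g_2$.

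The key step, and the main obstacle of the whole statement, is to discard the first two configurations. In either of them the element $\eta=g_1+g_2\sqrt{2p}\in\mathbb{Q}(\sqrt{2p})$ satisfies $\eta^2=(g_1^2+2pg_2^2)+2g_1g_2\sqrt{2p}=\beta+\alpha\sqrt{2p}=\varepsilon_{2p}$, so $\varepsilon_{2p}$ would be the square of a unit of $\mathbb{Q}(\sqrt{2p})$, contradicting its being the fundamental unit. Hence one of the remaining two configurations holds. In the case $\{r,s\}=\{2g_1^2,pg_2^2\}$ I would set $\alpha_1=2g_1,\ \alpha_2=g_2$, and in the case $\{pg_1^2,2g_2^2\}$ set $\alpha_1=2g_2,\ \alpha_2=g_1$; in both cases $\alpha_1\alpha_2=2g_1g_2=\alpha$, and a one-line verification gives $\bigl(\tfrac{1}{\sqrt2}(\alpha_1+\alpha_2\sqrt{2p})\bigr)^2=\tfrac12(\alpha_1^2+2p\alpha_2^2)+\alpha_1\alpha_2\sqrt{2p}=\beta+\alpha\sqrt{2p}=\varepsilon_{2p}$. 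Finally $\tfrac12(\alpha_1^2-2p\alpha_2^2)=r-s=-1$ in the first case and $s-r=+1$ in the second, which is precisely the sign $(-1)^u$ defining $u$. (Everything here rests on the factorization bookkeeping and the minimality of $\varepsilon_{2p}$; the ambient congruence $p\equiv 1\pmod 8$ is used only to place us in the norm-one setting of the section.)

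With the shape in hand, the table is a routine computation. Writing $\mu=\sqrt{\varepsilon_{2p}}=\tfrac{1}{\sqrt2}(\alpha_1+\alpha_2\sqrt{2p})$ and $\bar\mu=\tfrac{1}{\sqrt2}(\alpha_1-\alpha_2\sqrt{2p})$, one has $\mu^2=\varepsilon_{2p}$ and $\mu\bar\mu=\tfrac12(\alpha_1^2-2p\alpha_2^2)=(-1)^u$. Now $\tau_3$ fixes $\sqrt2,\sqrt p$ and $\sqrt{2p}$, so $\tau_3(\mu)=\mu$; $\tau_2$ fixes $\sqrt2$ but sends $\sqrt{2p}\mapsto-\sqrt{2p}$, so $\tau_2(\mu)=\bar\mu$; and $\tau_1$ negates both $1/\sqrt2$ and $\sqrt{2p}$, so $\tau_1(\mu)=-\bar\mu$ and likewise $\tau_1(\bar\mu)=-\mu$. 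Feeding these into the group-ring exponents gives $\mu^{1+\tau_2}=\mu\bar\mu=(-1)^u$, $\mu^{1+\tau_2\tau_3}=\mu\,\tau_2(\mu)=\mu\bar\mu=(-1)^u$, $\mu^{1+\tau_1}=\mu(-\bar\mu)=(-1)^{u+1}$, $\mu^{1+\tau_1\tau_3}=\mu\,\tau_1(\mu)=\mu(-\bar\mu)=(-1)^{u+1}$, and $\mu^{1+\tau_1\tau_2}=\mu\,\tau_1(\bar\mu)=\mu(-\mu)=-\varepsilon_{2p}$, matching every entry of the table.
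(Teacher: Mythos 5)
Your argument is correct and complete. The paper gives no proof of this lemma (it says only ``The reader can check it easily''), but your route is exactly the one the author uses for the analogous Lemma \ref{lm expressions of units under cond 1}: factor $(\beta-1)(\beta+1)=2p\alpha^2$ with $\gcd\bigl(\tfrac{\beta-1}{2},\tfrac{\beta+1}{2}\bigr)=1$, distribute the squarefree part $2p$, and discard the configurations $\{g_1^2,\,2pg_2^2\}$ --- the only difference being that you re-derive this exclusion directly from the fundamentality of $\varepsilon_{2p}$, whereas the paper would invoke Lemma \ref{1:046}, which encodes precisely that fact. The Galois computation of the table from $\mu\bar\mu=(-1)^u$ checks out entry by entry.
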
	
	
	To prove this lemma we need to recall the following :
	\begin{lemma}[\cite{Az-00},  {Lemma 5}]\label{1:046}
		Let $d>1$ be a square-free integer and $\varepsilon_d=x+y\sqrt d$,
		where $x$,  $y$ are  integers or semi-integers. If $N(\varepsilon_d)=1$,  then $2(x+1)$,  $2(x-1)$,  $2d(x+1)$ and
		$2d(x-1)$ are not squares in  $\QQ$.
	\end{lemma}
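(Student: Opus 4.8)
The plan is to argue by contradiction, using the fact that $\varepsilon_d$ is by hypothesis the fundamental unit of $\QQ(\sqrt d)$, hence the smallest unit $>1$ and in particular not the square of a unit. First I would record the two consequences of $N(\varepsilon_d)=1$: the conjugate satisfies $\bar\varepsilon_d=\varepsilon_d^{-1}$, and $x^2-dy^2=1$, so that
\begin{equation*}
(x-1)(x+1)=dy^2. \tag{$\ast$}
\end{equation*}
Since $\varepsilon_d>1$ we have $x=\tfrac12(\varepsilon_d+\varepsilon_d^{-1})>1$, so the four numbers $2(x\pm1)$ and $2d(x\pm1)$ are strictly positive, and $y\neq0$ (otherwise $\varepsilon_d=\pm1$). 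Note that this setup is insensitive to whether $x,y$ are integral or half-integral, since only the norm relation is used.

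Next I would introduce $\eta=\sqrt{\varepsilon_d}$, the positive real square root, and compute, using $\varepsilon_d+\varepsilon_d^{-1}=2x$,
\[
(\eta+\eta^{-1})^2=\varepsilon_d+2+\varepsilon_d^{-1}=2(x+1),\qquad (\eta-\eta^{-1})^2=2(x-1).
\]
As $\eta>1>\eta^{-1}>0$, both radicals are positive, giving $2\eta=\sqrt{2(x+1)}+\sqrt{2(x-1)}$. The key observation furnished by $(\ast)$ is the product relation
\[
\sqrt{2(x+1)}\cdot\sqrt{2(x-1)}=\sqrt{4(x^2-1)}=2|y|\sqrt d\in\QQ(\sqrt d).
\]

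Now suppose, for contradiction, that one of the four quantities is a square in $\QQ$. In each of the four cases one checks directly that at least one of the radicals $\sqrt{2(x+1)}$, $\sqrt{2(x-1)}$ already lies in $\QQ(\sqrt d)$: it is rational when $2(x\pm1)$ is a rational square, and it equals a rational multiple of $\sqrt d$ (namely $a/\sqrt d=a\sqrt d/d$) when $2d(x\pm1)=a^2$. By the product relation the companion radical is then forced to be $2|y|\sqrt d$ divided by an element of $\QQ(\sqrt d)$, hence it too lies in $\QQ(\sqrt d)$. Therefore $\eta=\sqrt{\varepsilon_d}\in\QQ(\sqrt d)$ in every case.

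Finally I would close the argument: $\eta$ is a root of the monic polynomial $X^2-\varepsilon_d$ with coefficients in $\OO_{\QQ(\sqrt d)}$, so $\eta$ is an algebraic integer lying in $\QQ(\sqrt d)$, and $N(\eta)^2=N(\varepsilon_d)=1$ makes it a unit; but $\varepsilon_d>1$ forces $1<\eta<\varepsilon_d$, contradicting the minimality of the fundamental unit. The step requiring the most care is precisely this last integrality point—one must notice that mere membership of $\sqrt{\varepsilon_d}$ in $\QQ(\sqrt d)$ already forces it to be an algebraic integer, hence a genuine unit strictly between $1$ and $\varepsilon_d$, which is what clashes with fundamentality; the four-case radical bookkeeping is routine but must be carried out so that each companion radical is verified to remain inside $\QQ(\sqrt d)$ and not escape to a larger field.
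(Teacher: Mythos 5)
Your argument is correct and complete: the identities $(\eta\pm\eta^{-1})^2=2(x\pm1)$ together with the product relation $\sqrt{2(x+1)}\,\sqrt{2(x-1)}=2|y|\sqrt d$ do force $\sqrt{\varepsilon_d}\in\QQ(\sqrt d)$ in each of the four cases, and the final integrality-plus-minimality step correctly rules this out since $\sqrt{\varepsilon_d}$ would be a unit strictly between $1$ and the fundamental unit. Note that the paper does not prove this lemma at all --- it imports it by citation from Azizi's article --- so there is no internal proof to compare against; your proof is the standard one (equivalent to writing $\sqrt{2\varepsilon_d}=\sqrt{x+1}+\sqrt{x-1}$ and deriving the same contradiction) and can stand as a self-contained justification.
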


	\begin{proof}[Proof of Lemma \ref{lm noms esp_2p}]
		As $N(\varepsilon_{2p})=1$, then $\beta^2-1=\alpha^22p$. So by   Lemma \ref{1:046}, we have % ($\beta\pm1= \alpha_1^22$ and  $\beta\mp1= \alpha_2^2p$) or
		($\beta\mp1= \alpha_1^2 $  and  $\beta\pm1=  \alpha_2^22p$) for some integers $\alpha_1$ and $\alpha_2$. Thus, $2\beta=  \alpha_1^2+\alpha_2^22p$ and     $\frac 12(\alpha_1^2-2p\alpha_2^2)=(-1)^u$, for some $u$   in $\{0, 1\}$. Therefore, $2\varepsilon_{2p}=2\beta+2\alpha\sqrt{2p}=\alpha_1^22p+\alpha_2^2+2\alpha_1^2+\alpha_2^22p=(\alpha_1+\alpha_2\sqrt{2p})^2$. The reader can deduce the rest easily.
	\end{proof}

	\begin{lemma}[\cite{azizitalbi}, Theorem  6] \label{units of k1}Let $p\equiv 1\pmod 4$ be a prime number.  
		\begin{enumerate}[\rm 1.]
			\item  If $N(\varepsilon_{2p})=-1$, then $\{\varepsilon_{2}, \varepsilon_{p},	\sqrt{\varepsilon_{2}\varepsilon_{p}\varepsilon_{2p}}\}$ is a fundamental system of units
			of $k_1=\QQ(\sqrt 2,\sqrt{p})$.
			\item  If $N(\varepsilon_{2p})=1$, then $\{\varepsilon_{2}, \varepsilon_{p},	\sqrt{\varepsilon_{2p}}\}$ is a fundamental system of units
			of $k_1=\QQ(\sqrt 2,\sqrt{p})$.
		\end{enumerate}
	\end{lemma}

	\begin{lemma}\label{lm expressions of units under cond 1}
		Let   $p\equiv1\pmod 8$  and  $ q \equiv3\pmod8$  be two primes such that
		$\genfrac(){}{0}{p}{q} =-1$.
		\begin{enumerate}[\rm 1.]
			\item Let  $x$ and $y$   be two integers such that
			$ \varepsilon_{2pq}=x+y\sqrt{2pq}$. Then   
			\begin{enumerate}[\rm i.]
				\item $(x-1)$ is a square in $\NN$, 
				\item $\sqrt{2\varepsilon_{2pq}}=y_1+y_2\sqrt{2pq}$ and 	$2= -y_1^2+2pqy_2^2$, for some integers $y_1$ and $y_2$.
			\end{enumerate}
			% $p(x-1)$ is a square in $\NN$, $\sqrt{2\varepsilon_{2pq}}=y_1\sqrt{p}+y_2\sqrt{2q}$ and 	$2=2qy_2^2-py_1^2$, for some integers $y_1$ and $y_2$   such that $y=y_1y_2$.
			
			\item  Let    $v$ and $w$ be two integers such that
			$ \varepsilon_{pq}=v+w\sqrt{pq}$. Then    
			\begin{enumerate}[\rm i.]
				\item $(v-1)$ is a square in $\NN$, 
				\item   $\sqrt{ 2\varepsilon_{ pq}}=w_1+w_2\sqrt{pq}$ and 	$2=-w_1^2+pqw_2^2$, for some integers $w_1$ and $w_2$.
			\end{enumerate}
			
		\end{enumerate}
	\end{lemma}
	\begin{proof}
		It is known that $N(\varepsilon_{2pq})=1$. Then, by the unique factorization in $\mathbb{Z}$ and Lemma \ref{1:046}  there exist some integers $y_1$ and $y_2$  $(y=y_1y_2)$ such that
		$$(1):\ \left\{ \begin{array}{ll}
			x\pm1=y_1^2\\
			x\mp1=2pqy_2^2,
		\end{array}\right. \quad
		(2):\ \left\{ \begin{array}{ll}
			x\pm1=py_1^2\\
			x\mp1=2qy_2^2,
		\end{array}\right.\quad
		\text{ or }\quad
		(3):\ \left\{ \begin{array}{ll}
			x\pm1=2py_1^2\\
			x\mp1=qy_2^2,
		\end{array}\right.
		$$		
		\begin{enumerate}[\rm$*$]
			\item System $(2)$ can not occur since it implies  
			$-1=\left(\frac{2qy_2^2}{p}\right)=\left(\frac{x\mp 1}{p}\right)=\left(\frac{x\pm1\mp 2}{p}\right)=\left(\frac{\mp2}{p}\right)=\left(\frac{2}{p}\right)=1$, which is absurd. 
			\item We similarly show that  System $(3)$ and $\left\{ \begin{array}{ll}
				x+1=y_1^2\\
				x-1=2pqy_2^2
			\end{array}\right.$ 
			can not occur.\\
		\end{enumerate}	
		Therefore $\left\{ \begin{array}{ll}
			x-1=y_1^2\\
			x+1=2pqy_2^2
		\end{array}\right.$ which gives the first item. The proof of the second item is analogous.
	\end{proof}
	
	\begin{lemma}\label{lm expressions of units under cond 1 eps_q, 2q}
		Let    $ q \equiv3\pmod8$ be a prime number.  
		\begin{enumerate}[\rm 1.]
			\item  Let    $c $ and $d$ be two integers such that
			$ \varepsilon_{2q}=c +d\sqrt{2q}$. Then    
			\begin{enumerate}[\rm i.]
				\item   $c-1$ is a square in $\NN$,
				\item  $\sqrt{ 2\varepsilon_{  2q}}=d_1 +d_2\sqrt{2q}$ and 	$2=-d_1^2+2qd_2^2$, for some integers $d_1$ and $d_2$.
			\end{enumerate}

			\item  Let    $\alpha $ and $\beta$ be two integers such that
			$ \varepsilon_{q}=\alpha +\beta\sqrt{q}$. Then   
			\begin{enumerate}[\rm i.]
				\item   $\alpha-1$ is a square in $\NN$,
				\item  $\sqrt{ 2\varepsilon_{  q}}=\beta_1 +\beta_2\sqrt{q}$ and 	$2=-\beta_1^2+q\beta_2^2$, for some integers $\beta_1$ and $\beta_2$.
			\end{enumerate}
		\end{enumerate}
		Furthermore, for any prime number $p\equiv 1\pmod 4$ we have: 
		%	\begin{eqnarray}
		\begin{table}[H] 
			\begin{tabular}{ |c|c|c|c|c|c|c|c|c|}
				\hline
				$\varepsilon$&$\varepsilon_{2}$ &$ {\varepsilon_{p}}$&$\sqrt{\varepsilon_{q}}$&$\sqrt{\varepsilon_{2q}}$          \\
				\hline 
				$\varepsilon^{1+\tau_1}$&$-1$ &$\varepsilon_{p}^2$ 	&$-\varepsilon_{q}$&$1$\\
				\hline
				$\varepsilon^{1+\tau_2}$&$\varepsilon_{2}^2$ &$-1$&	$\varepsilon_{q}$&$\varepsilon_{2q}$\\
				\hline
				$\varepsilon^{1+\tau_3}$&$\varepsilon_{2}^2$ &$\varepsilon_{p}^2$& $-1$&$-1$\\
				\hline
				$\varepsilon^{1+\tau_1\tau_2}$&$-1$ &$-1$&	$-\varepsilon_{q}$&$1$\\
				\hline
				$\varepsilon^{1+\tau_1\tau_3}$&$-1$ &$\varepsilon_{p}^2$&	$1$&$-\varepsilon_{2q}$\\
				\hline
				$\varepsilon^{1+\tau_2\tau_3}$&$\varepsilon_{2}^2$ &$-1$&	$-1$&$-1$\\
				\hline
			\end{tabular} 
			\vspace*{0.2cm}
			\caption{Norm maps on units  }\label{norm q=3 p=1mod4q}
		\end{table}
		%	\end{eqnarray}
	\end{lemma}	
	\begin{proof}
		Similar to that of	Lemma  \ref{lm expressions of units under cond 1}.
	\end{proof}

	\begin{lemma}\label{lm expressions of units under cond 3}
		Let        $p\equiv1\pmod 8$ and $ q \equiv3\pmod8$ be two primes such that
		$\genfrac(){}{0}{p}{q} =1$.
		\begin{enumerate}[\rm 1.]
			\item Let  $x$ and $y$   be two integers such that
			$ \varepsilon_{2pq}=x+y\sqrt{2pq}$. Then   
			\begin{enumerate}[\rm i.]
				\item $(x-1)$, $p(x-1)$ or $2p(x+1)$ is a square in $\NN$, 
				\item Furthermore, 
				\begin{enumerate}[\rm a)]
					\item If $(x-1)$ is a square in $\NN$, then $\sqrt{2\varepsilon_{2pq}}=y_1+y_2\sqrt{2pq}$ and 	$2= -y_1^2+2pqy_2^2$.
					\item If $p(x-1)$ is a square in $\NN$, then $\sqrt{2\varepsilon_{2pq}}=y_1\sqrt{p}+y_2\sqrt{2q}$ and $2= -py_1^2+2qy_2^2$.
					\item If $2p(x+1)$ is a square in $\NN$, then $\sqrt{2\varepsilon_{2pq}}=y_1\sqrt{2p}+y_2\sqrt{q}$ and $2=  2py_1^2-qy_2^2$. 
				\end{enumerate}
				
			\end{enumerate}
			\indent Where $y_1$ and $y_2$ are two integers  such that $y=y_1y_2$.
			\item  Let    $v$ and $w$ be two integers such that
			$ \varepsilon_{pq}=v+w\sqrt{pq}$. Then   
			\begin{enumerate}[\rm i.]
				\item $(v-1)$, $p(v-1)$ or $2p(v+1)$ is a square in $\NN$, 
				\item  Furthermore, 
				\begin{enumerate}[\rm a)]
					\item If $(v-1)$ is a square in $\NN$, then $\sqrt{2\varepsilon_{pq}}=w_1+w_2\sqrt{pq}$ and 	$2= -w_1^2+pqw_2^2$.
					\item If $p(v-1)$ is a square in $\NN$, then $\sqrt{2\varepsilon_{pq}}=w_1\sqrt{p}+w_2\sqrt{q}$ and $2= -pw_1^2+qw_2^2$.
					\item If $2p(v+1)$ is a square in $\NN$, then $\sqrt{\varepsilon_{pq}}=w_1\sqrt{p}+w_2\sqrt{q}$ and $1= pw_1^2-qw_2^2$. 
				\end{enumerate}
				
			\end{enumerate}	
			Where $w_1$ and $w_2$ are two integers  such that $w=w_1w_2$ in $a)$ and $b)$, and  $w=2w_1w_2$ in $c)$.
		\end{enumerate}
	\end{lemma}	
	\begin{proof}
		We proceed as in the proof of	Lemma \ref{lm expressions of units under cond 1}.
	\end{proof}

	Now we recall the following lemmas:

	% 	\begin{lemma}[\cite{ZAT-15}]\label{3:105}
	% 		Let $d\equiv1\pmod4$ be a positive square free integer and   $\varepsilon_d=x+y\sqrt d$ be the fundamental unit of  $\QQ(\sqrt d)$. Assume   $N(\varepsilon_d)=1$,  then
	%		\begin{enumerate}[\rm\indent1.]
	%			\item $x+1$ and $x-1$ are not squares in  $\NN$,  i.e.,  $2\varepsilon_{d}$ is not a square in  $\QQ(\sqrt{d})$.
	%			\item For all prime  $p$ dividing   $d$,  $p(x+1)$ and $p(x-1)$ are not squares in  $\NN$.
	%		\end{enumerate}
	%	\end{lemma}

	% Let us close this section by recalling the   following class number formula for a multiquadratic number field  which is usually attributed to Kuroda \cite{Ku-50}, but it goes back to Herglotz \cite{He-22}  (cf. \cite[p. 27]{BFL}).
	\begin{lemma}[\cite{Ku-50}]\label{wada's f.}
		Let $K$ be a multiquadratic number field of degree $2^n$, $n\in\mathds{N}$,  and $k_i$ the $s=2^n-1$ quadratic subfields of $K$. Then
		$$h(K)=\frac{1}{2^v}( E_K: \prod_{i=1}^{s}E_{k_i})\prod_{i=1}^{s}h(k_i),$$
		with $$v=\left\{ \begin{array}{cl}
			n(2^{n-1}-1); &\text{ if } K \text{ is real, }\\
			(n-1)(2^{n-2}-1)+2^{n-1}-1 & \text{ if } K \text{ is imaginary.}
		\end{array}\right.$$
	\end{lemma}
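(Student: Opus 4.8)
The statement is \emph{Kuroda's class number formula}, and the plan is to deduce it from the analytic class number formula, isolating the whole difficulty in a comparison of regulators. Write $G=\mathrm{Gal}(K/\mathbb{Q})\cong(\mathbb{Z}/2\mathbb{Z})^{n}$. Its $s=2^{n}-1$ nontrivial characters are precisely the quadratic Dirichlet characters $\chi_{1},\dots,\chi_{s}$ cutting out the quadratic subfields $k_{1},\dots,k_{s}$. Because $K/\mathbb{Q}$ is abelian, the Dedekind zeta functions factor through these characters,
\begin{equation*}
\zeta_{k_i}(z)=\zeta_{\mathbb{Q}}(z)\,L(z,\chi_i),\qquad \zeta_{K}(z)=\zeta_{\mathbb{Q}}(z)\prod_{i=1}^{s}L(z,\chi_i).
\end{equation*}
Eliminating the $L$-factors and taking residues at the simple pole $z=1$ (where $\mathrm{Res}\,\zeta_{\mathbb{Q}}=1$ and each $L(1,\chi_i)$ is finite and nonzero) gives $\mathrm{Res}\,\zeta_{K}=\prod_{i=1}^{s}\mathrm{Res}\,\zeta_{k_i}$.

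Next I would substitute the analytic class number formula $\mathrm{Res}\,\zeta_{F}=2^{r_1(F)}(2\pi)^{r_2(F)}h(F)R_F\big/\big(w_F\sqrt{|d_F|}\big)$ for $F=K$ and $F=k_i$. The conductor--discriminant formula yields $|d_K|=\prod_{i}|d_{k_i}|$, so the discriminant factors cancel. Collecting the remaining archimedean data $r_1,r_2$, the numbers of roots of unity $w_F$, and the powers of $2$ and $2\pi$ then reduces the identity to a relation between class numbers and regulators. In the totally real case ($r_1(K)=2^{n}$, $r_2=0$, $w=2$ throughout) these $2$-powers cancel exactly and one is left with $h(K)R_K=\big(\prod_i R_{k_i}\big)\big(\prod_i h(k_i)\big)$; in the imaginary case the complex places, the factors $2\pi$ and the larger group of roots of unity survive and produce an explicit power of $2$, which is the source of the different exponent $v$.

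It remains to convert the regulator quotient into the unit index. Since $K$ and $\prod_i k_i$ have the same unit rank $s$, the group $\prod_i E_{k_i}$ has finite index in $E_K$, and from the interpretation of the regulator as a lattice covolume one gets $\mathrm{Reg}\big(\prod_i E_{k_i}\big)=\big(E_K:\prod_i E_{k_i}\big)R_K$. On the other hand I would compute $\mathrm{Reg}\big(\prod_i E_{k_i}\big)$ directly: the $2^{n}$ real places of $K$ form a single free $G$-orbit, so they are indexed by $G$, and for the fundamental unit $\varepsilon_{k_i}$ one checks $\log|\sigma(\varepsilon_{k_i})|=\chi_i(\sigma)\,R_{k_i}$ for every $\sigma\in G$ (the conjugate having absolute value $\varepsilon_{k_i}^{-1}$ since $|N(\varepsilon_{k_i})|=1$). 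Hence the Dirichlet matrix of $\prod_i E_{k_i}$ equals $\mathrm{diag}(R_{k_1},\dots,R_{k_s})$ times the character table of $G$ with the trivial row and one column deleted. Evaluating this determinant is the main obstacle: using $HH^{T}=2^{n}I$ for the $\pm1$ character table $H$ (so $|\det H|=2^{n2^{n-1}}$) together with the adjugate identity $\det H'=\det H/2^{n}$ for the deleted minor, one finds $\mathrm{Reg}\big(\prod_i E_{k_i}\big)=2^{\,n(2^{n-1}-1)}\prod_i R_{k_i}$.

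Combining the two expressions gives $\prod_i R_{k_i}\big/R_K=\big(E_K:\prod_i E_{k_i}\big)\big/2^{\,n(2^{n-1}-1)}$, and substituting into $h(K)R_K=\big(\prod_i R_{k_i}\big)\big(\prod_i h(k_i)\big)$ produces the stated formula with $v=n(2^{n-1}-1)$ in the real case. The imaginary case is entirely analogous, the only changes being the bookkeeping of the archimedean factors and roots of unity from the second paragraph and the corresponding determinant being computed over the $r_1+r_2$ archimedean places with the weights $\delta_j\in\{1,2\}$; these together yield $v=(n-1)(2^{n-2}-1)+2^{n-1}-1$. I expect the real-place determinant evaluation and the careful archimedean/torsion count in the imaginary case to be the two delicate points.
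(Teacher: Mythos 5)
The paper gives no proof of this lemma at all --- it is quoted as Kuroda's class number formula with a citation to \cite{Ku-50} --- so there is nothing internal to compare your argument against. Your sketch is the classical analytic derivation (essentially the route of Kuroda and of Lemmermeyer's later exposition), and its key steps check out. The residue identity $\mathrm{Res}\,\zeta_K=\prod_i\mathrm{Res}\,\zeta_{k_i}$ follows as you say from the character factorization, the conductor--discriminant formula does cancel the discriminants, and in the totally real case the archimedean factors cancel exactly ($2^{2^n}/2$ on the left against $\prod_{i=1}^{s}2^2/2=2^{2^n-1}$ on the right), leaving $h(K)R_K=\prod_i h(k_i)R_{k_i}$. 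The regulator computation is also sound: $\log|\sigma(\varepsilon_{k_i})|=\chi_i(\sigma)R_{k_i}$ because $|N(\varepsilon_{k_i})|=1$, the index--covolume relation $\mathrm{Reg}(\prod_iE_{k_i})=(E_K:\prod_iE_{k_i})R_K$ is legitimate since torsion is $\{\pm1\}$ on both sides, and the cofactor identity $\det H_{\hat1\hat1}=\det H\cdot(H^{-1})_{11}=\pm 2^{n2^{n-1}-n}$ gives exactly $2^{n(2^{n-1}-1)}$. One caveat you should record for the imaginary case: beyond the archimedean bookkeeping you mention, the quotient $w_K/\prod_i w_{k_i}$ does not always contribute trivially (e.g.\ when $\zeta_8$ or $\zeta_3$ lies in $K$), and the formula with $v=(n-1)(2^{n-2}-1)+2^{n-1}-1$ as stated admits exceptional adjustments in those situations; since the present paper only ever invokes the real case, this does not affect its applications, but a complete proof of the lemma as stated would need to address it.
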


	\begin{lemma}\label{class numbers of quadratic field}
		Let $q\equiv 3\pmod 4$ and  $p\equiv 1\pmod 4$ be two    primes. 
		\begin{enumerate}[\rm 1.]
			\item By \cite[Corollary 18.4]{connor88}, we have  $h_2(p)=h_2(q)=h_2(2q)= h_2(2)=h_2(-2)=h_2(-q)=h_2(-1)=1$.
			\item If $\genfrac(){}{0}{p}{q} =-1$,   then   $h_2(pq)=h_2(2pq)=h_2(-pq)=2$, else $h_2(pq)$, $h_2(2pq)$ and $h_2(-pq)$ are divisible by $4$ $($cf. \cite[Corollaries 19.6 and 19.7]{connor88}$)$.
			\item If $ q\equiv 3\pmod 8$, then $h_2(-2q)=2$ $($cf. \cite[Corollary 19.6]{connor88}$)$.
			%	\item If $p\equiv  -q\equiv 5\pmod 8$, then $h_2(-p)=h_2(-2p)=h_2(-2q)=2$.
		\end{enumerate}
	\end{lemma}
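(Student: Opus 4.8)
The plan is to read off every value from three standard ingredients: genus theory for the $2$-rank, the R\'edei--Reichardt formula for the $4$-rank, and the norm of the fundamental unit for the passage between the narrow and the wide class group; the borderline values then coincide with the tables of Connor and Hurrelbrink that the statement cites. Recall that if $\QQ(\sqrt d)$ has discriminant $D$ with $t$ prime-discriminant divisors, then the narrow $2$-rank is $t-1$; that R\'edei--Reichardt gives the narrow $4$-rank as $t-1$ minus the $\mathbb{F}_2$-rank of the R\'edei matrix built from the Legendre/Kronecker symbols among those prime discriminants; and that for a real field the narrow class number equals the wide one if $N(\varepsilon_d)=-1$ and is twice it if $N(\varepsilon_d)=1$, while for an imaginary field the two agree.

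First I would dispose of part (1). The fields $\QQ(\sqrt p)$, $\QQ(\sqrt 2)$, $\QQ(\sqrt{-2})$, $\QQ(\sqrt{-q})$ and $\QQ(i)$ have discriminants $p$, $8$, $-8$, $-q$ and $-4$, each with a single ramified prime, so their $2$-rank is $0$ and their $2$-class number is $1$. For $\QQ(\sqrt q)$ and $\QQ(\sqrt{2q})$ (discriminants $4q$ and $8q$, $t=2$) the narrow $2$-rank is $1$; a short R\'edei computation using $q\equiv 3\pmod 4$ shows that the corresponding $2\times 2$ R\'edei matrix has rank $1$, so the narrow $4$-rank is $0$ and the narrow $2$-class group is $\ZZ/2\ZZ$. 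Since $N(\varepsilon_q)=N(\varepsilon_{2q})=1$, the wide $2$-class number is half of it, whence $h_2(q)=h_2(2q)=1$. This reproduces the cited Corollary 18.4.

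For part (2), $pq\equiv 3\pmod 4$ gives $\QQ(\sqrt{pq})$ the discriminant $4pq=(-4)\cdot p\cdot(-q)$, so the narrow $2$-rank is $t-1=2$. The fine structure is decided by the narrow $4$-rank: because $p\equiv 1\pmod 4$, quadratic reciprocity collapses the relevant entries of the R\'edei matrix to the single symbol $\left(\frac{p}{q}\right)$, so the matrix attains maximal rank and the $4$-rank vanishes exactly when $\left(\frac{p}{q}\right)=-1$. In that case the narrow $2$-class group is $(\ZZ/2\ZZ)^2$ of order $4$, and as $N(\varepsilon_{pq})=1$ the wide $2$-class number is $2$; when $\left(\frac{p}{q}\right)=1$ the narrow $4$-rank is positive, the narrow $2$-part has order at least $8$, and $4\mid h_2(pq)$. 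The identical analysis for $\QQ(\sqrt{2pq})$ (discriminant $8pq$, $t=3$) and for the imaginary field $\QQ(\sqrt{-pq})$ (discriminant $-pq=p\cdot(-q)$, $t=2$, narrow equal to wide) yields the asserted values of $h_2(2pq)$ and $h_2(-pq)$; these are Corollaries 19.6 and 19.7.

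Finally, for part (3) the imaginary field $\QQ(\sqrt{-2q})$ has discriminant $-8q=8\cdot(-q)$ with $t=2$, so its $2$-rank is $1$ and $2\mid h_2(-2q)$. Here the decisive R\'edei symbol reduces to $\left(\frac{2}{q}\right)$, which is nontrivial precisely when $q\equiv 3\pmod 8$; in that case the R\'edei matrix has rank $1$, the $4$-rank is $0$, and $h_2(-2q)=2$, matching Corollary 19.6. The main obstacle throughout is not the $2$-rank, which is automatic from genus theory, but the careful evaluation of the $4$-rank through the R\'edei matrix together with the bookkeeping that converts narrow class numbers into wide ones; once the norms of the fundamental units and the R\'edei symbols are read off correctly, every entry agrees with the tabulated results of Connor and Hurrelbrink.
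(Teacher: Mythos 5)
The paper itself offers no proof of this lemma --- it is a list of facts cited from Conner and Hurrelbrink --- so your R\'edei--Reichardt derivation is necessarily an independent route, and for parts (1) and (3), for the branch $\left(\frac{p}{q}\right)=-1$ of part (2), and for $h_2(-pq)$ in both branches, your computations are correct.

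There is, however, a genuine gap in the ``else'' branch of part (2) for the real fields $\QQ(\sqrt{pq})$ and $\QQ(\sqrt{2pq})$. You assert that, because $p\equiv 1\pmod 4$, the R\'edei matrix ``collapses to the single symbol $\left(\frac{p}{q}\right)$.'' That is only true when $p\equiv 1\pmod 8$. The discriminant $4pq$ factors into the three prime discriminants $-4$, $p$, $-q$, and the matrix entry attached to the pair $(p,2)$ is governed by $\left(\frac{p}{2}\right)$ (equivalently $\left(\frac{2}{p}\right)$), which is $-1$ when $p\equiv 5\pmod 8$. In that case the $3\times 3$ R\'edei matrix already has the maximal rank $t-1=2$ \emph{regardless} of $\left(\frac{p}{q}\right)$, so the narrow $4$-rank is $0$, the narrow $2$-class group is $(\ZZ/2\ZZ)^2$, and $h_2(pq)=2$ even when $\left(\frac{p}{q}\right)=1$. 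A concrete counterexample to the conclusion you are trying to derive: $p=5$, $q=11$ satisfy $\left(\frac{5}{11}\right)=1$, yet $h(\QQ(\sqrt{55}))=2$ and $h(\QQ(\sqrt{110}))=2$, neither divisible by $4$. So the implication ``$\left(\frac{p}{q}\right)=1\Rightarrow 4\mid h_2(pq),\,4\mid h_2(2pq)$'' cannot be proved as stated for all $p\equiv 1\pmod 4$; it requires the stronger hypothesis $p\equiv 1\pmod 8$ (under which your collapsing argument does go through, since then $\left(\frac{2}{p}\right)=1$ and the row attached to $p$ degenerates). Note that the paper only ever invokes this part of the lemma with $p\equiv 1\pmod 8$, so the applications are unaffected, but your proof --- and indeed the lemma as literally stated --- needs that restriction in the ``else'' clause for $pq$ and $2pq$ (the statement for $-pq$ is fine for all $p\equiv 1\pmod 4$, since the prime $2$ does not divide the discriminant $-pq$).
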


	\section{\bf Unit groups of real triquadratic number fields and their $2$-class numbers} 		
	
	Keep the notations in the above section. In this section we shall compute the unit groups and the $2$-class numbers of the Fr\"ohlich fields $\KK$.

	\subsection{\bf The case: $p\equiv 1 \pmod8$,  $q\equiv 3 \pmod4$ and $\genfrac(){}{0}{p}{q} =-1$}$\,$ 			
	
	We shall now state and prove our first main theorem.

	\begin{theorem}\label{T_3_-1} Let $p\equiv 1\pmod{8}$ and $q\equiv3\pmod 8$ be two primes such that     $\genfrac(){}{0}{p}{q} =-1$. 
		Put     $\KK=\QQ(\sqrt 2, \sqrt{p}, \sqrt{q} )$. Then
		\begin{enumerate}[\rm 1.]
			\item If $N(\varepsilon_{2p})=-1$,   we have
			\begin{enumerate}[\rm $\bullet$]
				
				\item The unit group of $\KK$ is :
				$$E_{\KK}=\langle -1,  \varepsilon_{2}, \varepsilon_{p},   \sqrt{\varepsilon_{q}}, \sqrt{\varepsilon_{2q}},  \sqrt{\varepsilon_{pq}} , \sqrt{\varepsilon_{2}\varepsilon_{p}\varepsilon_{2p}}, 
				\sqrt{\sqrt{\varepsilon_{q}} \sqrt{\varepsilon_{2q}} \sqrt{\varepsilon_{pq}} \sqrt{\varepsilon_{2pq}}}\rangle$$
				\item The $2$-class group of  $\KK$ is cyclic of order  $\frac 12h_2(2p).$
			\end{enumerate}
			\item If $N(\varepsilon_{2p})=1$,   we have
			
			\begin{enumerate}[\rm $\bullet$]
				\item  The unit group of $\KK$ is :
				$$E_{\KK}=\langle -1,  \varepsilon_{2}, \varepsilon_{p},   \sqrt{\varepsilon_{q}}, \sqrt{\varepsilon_{2q}},  \sqrt{\varepsilon_{pq}} , \sqrt{\varepsilon_{2}^a\varepsilon_{p}^a \sqrt{\varepsilon_{q}} \sqrt{\varepsilon_{pq}}
					\sqrt{\varepsilon_{2p}}},
				\sqrt{\varepsilon_{2}^a\varepsilon_{p}^a  \sqrt{\varepsilon_{2q}} \sqrt{\varepsilon_{2pq}}
					\sqrt{\varepsilon_{2p}}}      \rangle,$$
				where $a\in\{0,1\}$ such that $a\equiv 1+u \pmod 2$.
				\item  The $2$-class group of  $\KK$ is cyclic of order  $h_2(2p)$.
			\end{enumerate}
		\end{enumerate}
	\end{theorem}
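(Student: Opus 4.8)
The plan is to run Wada's method, recalled in the preceding section, with $\sigma_1=\tau_3$, $\sigma_2=\tau_2$, $\sigma_3=\tau_2\tau_3$, whose fixed fields are exactly the three biquadratic subfields $k_1,k_2,k_3$ of Figure~\ref{fig:I}. First I would fix fundamental systems of units of these three fields. For $k_1=\QQ(\sqrt2,\sqrt p)$ this is Lemma~\ref{units of k1}, which splits along $N(\varepsilon_{2p})=\mp1$ and supplies either $\sqrt{\varepsilon_2\varepsilon_p\varepsilon_{2p}}$ or $\sqrt{\varepsilon_{2p}}$. For $k_2=\QQ(\sqrt2,\sqrt q)$ and $k_3=\QQ(\sqrt2,\sqrt{pq})$ (note $pq\equiv3\pmod 8$), Lemmas~\ref{lm expressions of units under cond 1 eps_q, 2q} and~\ref{lm expressions of units under cond 1} give $\sqrt{\varepsilon_q},\sqrt{\varepsilon_{2q}}\in k_2$ and $\sqrt{\varepsilon_{pq}},\sqrt{\varepsilon_{2pq}}\in k_3$, hence the corresponding fundamental systems. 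Collecting these, the group $\langle-1\rangle E_{k_1}E_{k_2}E_{k_3}$ is generated by seven units, namely $\varepsilon_2$, $\varepsilon_p$, $\sqrt{\varepsilon_2\varepsilon_p\varepsilon_{2p}}$ (resp. $\sqrt{\varepsilon_{2p}}$), $\sqrt{\varepsilon_q}$, $\sqrt{\varepsilon_{2q}}$, $\sqrt{\varepsilon_{pq}}$, $\sqrt{\varepsilon_{2pq}}$, which already realise the full rank $7$ of $E_\KK$.

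By Wada's method $E_\KK$ is obtained from this group by adjoining the square roots of those products of the seven generators that are squares in $\KK$, so everything reduces to describing the $\mathbb{F}_2$-space of square classes among these products that die in $\KK^\times/(\KK^\times)^2$. This is the heart of the matter and the main obstacle. I would test a product $\gamma=\prod_j\delta_j^{c_j}$ ($c_j\in\{0,1\}$) by a sign/conjugation chase: if $\gamma=\eta^2$ with $\eta\in\KK$, then for every $\tau\in\mathrm{Gal}(\KK/\QQ)$ one has $(\eta^{1+\tau})^2=\gamma\gamma^\tau$, so $\gamma\gamma^\tau$ must be a square in the fixed field of $\tau$ and the induced signs must be globally consistent. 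The transformation data needed is precisely Table~\ref{norm q=3 p=1mod4q} (for $\varepsilon_2,\varepsilon_p,\sqrt{\varepsilon_q},\sqrt{\varepsilon_{2q}}$), together with the analogous entries for $\sqrt{\varepsilon_{pq}},\sqrt{\varepsilon_{2pq}}$ and, when $N(\varepsilon_{2p})=1$, the table of Lemma~\ref{lm noms esp_2p}. Carrying out the elimination over all nontrivial products, I expect the only surviving new class to be $\sqrt{\varepsilon_q}\sqrt{\varepsilon_{2q}}\sqrt{\varepsilon_{pq}}\sqrt{\varepsilon_{2pq}}$ when $N(\varepsilon_{2p})=-1$, and the two classes $\varepsilon_2^a\varepsilon_p^a\sqrt{\varepsilon_q}\sqrt{\varepsilon_{pq}}\sqrt{\varepsilon_{2p}}$ and $\varepsilon_2^a\varepsilon_p^a\sqrt{\varepsilon_{2q}}\sqrt{\varepsilon_{2pq}}\sqrt{\varepsilon_{2p}}$ (with $a\equiv1+u\pmod2$) when $N(\varepsilon_{2p})=1$. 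The delicate point is sufficiency: the sign chase only certifies candidates, so I would clinch that these products are genuine squares by producing explicit square roots from the radical formulas of the three lemmas, and check that no further independent relation survives. Adjoining these roots gives the two asserted generating sets for $E_\KK$.

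For the $2$-class number I would invoke Kuroda's formula (Lemma~\ref{wada's f.}) with $n=3$, so $v=3(2^{2}-1)=9$, taken over the seven quadratic subfields of $\KK$. From the explicit generators of $E_\KK$ just obtained, the unit index relative to $\prod_i E_{k_i}=\langle\varepsilon_2,\varepsilon_p,\varepsilon_q,\varepsilon_{2p},\varepsilon_{2q},\varepsilon_{pq},\varepsilon_{2pq}\rangle$ is the reciprocal of the determinant of the matrix of exponent vectors; this yields $q(\KK)=2^{6}$ when $N(\varepsilon_{2p})=-1$ and $q(\KK)=2^{7}$ when $N(\varepsilon_{2p})=1$. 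Since $\genfrac(){}{0}{p}{q}=-1$, Lemma~\ref{class numbers of quadratic field} gives $h_2(pq)=h_2(2pq)=2$ and $h_2(2)=h_2(p)=h_2(q)=h_2(2q)=1$, so $\prod_i h_2(k_i)=4\,h_2(2p)$, and the formula collapses to $h_2(\KK)=\tfrac12 h_2(2p)$ in the first case and $h_2(\KK)=h_2(2p)$ in the second.

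Finally, to upgrade the order to the structure statement, I would bound the $2$-rank of $\mathrm{Cl}_2(\KK)$ by $1$. Since $p\equiv1\pmod8$, genus theory shows $\mathrm{Cl}_2(\QQ(\sqrt{2p}))$ is cyclic (only the primes $2$ and $p$ ramify), so $h_2(2p)$ is already the order of a cyclic group; it then suffices to control the rank of $\mathrm{Cl}_2(\KK)$ by genus theory over a suitable subfield, or via the known cyclicity of the $2$-class groups of Fröhlich fields $F_3$ recalled in the introduction, the point being that the hypothesis $\genfrac(){}{0}{p}{q}=-1$ keeps $h_2(pq)$ and $h_2(2pq)$ equal to $2$ and thereby forbids extra rank. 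With $2$-rank $\le1$ and the order computed above, $\mathrm{Cl}_2(\KK)$ is cyclic of the stated order, which completes both cases.
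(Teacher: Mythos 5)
Your setup (Wada's method, the seven-generator group $E_{k_1}E_{k_2}E_{k_3}$, and the norm-map sign chase through the subfields using Table \ref{norm q=3 p=1mod4q} and Lemmas \ref{lm noms esp_2p}--\ref{lm expressions of units under cond 1}) matches the paper's elimination step, and your arithmetic for $q(\KK)$ and the Kuroda computation is consistent with the paper's. But there is a genuine gap at the decisive point, which you yourself flag as ``the delicate point'': showing that the surviving candidates (e.g.\ $\sqrt{\varepsilon_{q}}\sqrt{\varepsilon_{2q}}\sqrt{\varepsilon_{pq}}\sqrt{\varepsilon_{2pq}}$ in the case $N(\varepsilon_{2p})=-1$) actually \emph{are} squares in $\KK$. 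Your plan to ``produce explicit square roots from the radical formulas of the three lemmas'' does not work: those lemmas give square roots of $2\varepsilon_{pq}$, $2\varepsilon_{2pq}$, etc.\ inside quadratic fields, and there is no direct way to assemble from them an explicit square root in $\KK$ of the quartic product. Without this, your computation of $q(\KK)$ is only an upper bound, Kuroda's formula only gives an upper bound for $h_2(\KK)$, and the proof does not close. Your argument for cyclicity is also unsupported: the introduction's references concern parity and the case $p_i\equiv 3\pmod 4$, so there is no ``known cyclicity of the $2$-class groups of Fr\"ohlich fields $F_3$'' to invoke here.

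The paper closes both gaps at once by running the logic in the opposite direction. It first establishes $h_2(\KK)$ \emph{independently} of $q(\KK)$: the extension $\KK/k_5$ with $k_5=\QQ(\sqrt{q},\sqrt{2p})$ is unramified quadratic, $\mathrm{Cl}_2(k_5)$ is cyclic with $h_2(k_5)=h_2(2p)$ (computed by Kuroda on the biquadratic field $k_5$, where $q(k_5)$ \emph{can} be determined from the lemmas), whence $h_2(\KK)=\tfrac12 h_2(k_5)$ and, since $\KK$ sits inside the cyclic $2$-Hilbert class field tower of $k_5$, $\mathrm{Cl}_2(\KK)$ is cyclic. Kuroda's formula for $\KK$ then \emph{forces} $q(\KK)=2^6$ (resp.\ $2^7$), and it is this count that proves the surviving candidate products are genuine squares --- no explicit radical is ever exhibited. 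If you want to salvage your write-up, replace the ``explicit square root'' step by this class-field-theoretic determination of $h_2(\KK)$ via $k_5$; as written, the sufficiency direction and the cyclicity claim are both unproved.
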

	\begin{proof}
		We shall use the method and  the preparations exposed in  Section \ref{sec2prep}. Therefore, we need the unit groups of the intermediate fields $k_1$, $k_2$ and $k_3$.
		\begin{enumerate}[\rm 1.]
			\item    Assume that   $N(\varepsilon_{2p})=-1$.  By Lemma \ref{units of k1},     $\{\varepsilon_{2}, \varepsilon_{p},	\sqrt{\varepsilon_{2}\varepsilon_{p}\varepsilon_{2p}}\}$  is a  fundamental system of units of $k_1$. One can easily deduce from Lemmas \ref{lm expressions of units under cond 1 eps_q, 2q} and \ref{lm expressions of units under cond 1} that $\{\varepsilon_{2}, \sqrt{\varepsilon_{q}}, \sqrt{\varepsilon_{2q}}\}$ and $\{ \varepsilon_{2}, 	\sqrt{\varepsilon_{pq}},\sqrt{\varepsilon_{2pq}}\}$ are respectively fundamental systems of units of $k_2$ and $k_3$.
			It follows that,  	$$E_{k_1}E_{k_2}E_{k_3}=\langle-1,  \varepsilon_{2}, \varepsilon_{p},   \sqrt{\varepsilon_{q}}, \sqrt{\varepsilon_{2q}},  \sqrt{\varepsilon_{pq}} ,\sqrt{ \varepsilon_{2pq}}, \sqrt{\varepsilon_{2}\varepsilon_{p}\varepsilon_{2p}}\rangle.$$	
			Let  $\xi$ be an element of $\KK$ which is the  square root of an element of $E_{k_1}E_{k_2}E_{k_3}$. Therefore, we can assume that
			$$\xi^2=\varepsilon_{2}^a\varepsilon_{p}^b \sqrt{\varepsilon_{q}}^c\sqrt{\varepsilon_{2q}}^d\sqrt{\varepsilon_{pq}}^e\sqrt{\varepsilon_{2pq}}^f
			\sqrt{\varepsilon_{2}\varepsilon_{p}\varepsilon_{2p}}^g,$$
			where $a, b, c, d, e, f$ and $g$ are in $\{0, 1\}$.\\ %Remark that the question now becomes whether the equation
			Remark that the question now becomes about the  solvability  in $\KK$   of the equation:
			$\xi^2-\varepsilon_{2}^a\varepsilon_{p}^b \sqrt{\varepsilon_{q}}^c\sqrt{\varepsilon_{2q}}^d\sqrt{\varepsilon_{pq}}^e\sqrt{\varepsilon_{2pq}}^f
			\sqrt{\varepsilon_{2}\varepsilon_{p}\varepsilon_{2p}}^g=0$. % has solutions in $\KK$ or not.
			Assuming that this equation has solutions in $\KK$, we shall firstly use norm maps 
			from $\KK$ to its  subextensions  to eliminate   the forms with do  not occur. 
			
			\noindent\ding{224}  Let us start	by applying   the norm map $N_{\KK/k_2}=1+\tau_2$. Using Lemma \ref{lm expressions of units under cond 1} we get 
			$\sqrt{\varepsilon_{2pq}}^{1+\tau_2}=(\frac{1}{\sqrt 2}(y_1+y_2\sqrt{2pq}))\times \tau_2(\frac{1}{\sqrt 2}(y_1+y_2\sqrt{2pq}))= (\frac{1}{\sqrt 2}(y_1+y_2\sqrt{2pq}))\times (\frac{1}{\sqrt 2}(y_1-y_2\sqrt{2pq}))=\frac 12(y_1^2-2pqy_2)=\frac 12(-2)=-1$. Similarly we have $\sqrt{\varepsilon_{pq}}^{1+\tau_2}=-1$. So by   Table \ref{norm q=3 p=1mod4q},   we have:
			%	\begin{eqnarray}\label{T_3_-1_tau2_N=-1}
			%	\begin{tabular}{ |c|c|c|c|c|c|c|c|c|}
			%			\hline
			%			$\varepsilon$&$\varepsilon_{2}$ &$ {\varepsilon_{p}}$&$\sqrt{\varepsilon_{q}}$&$\sqrt{\varepsilon_{2q}}$&$\sqrt{\varepsilon_{pq}}$&$\sqrt{\varepsilon_{2pq}}$&$\sqrt{\varepsilon_{2}\varepsilon_{p}\varepsilon_{2p}}$ \\
			%		\hline
			%			$\varepsilon^{1+\tau_2}$&$\varepsilon_{2}^2$ &$-1$&	$\varepsilon_{q}$&$\varepsilon_{2q}$&$ -1$& $-1$&$(-1)^v\varepsilon_{2}$ \\
			%		\hline
			%	\end{tabular} 
			%	\end{eqnarray}
			\begin{eqnarray*}
				N_{\KK/k_2}(\xi^2)&=&
				\varepsilon_{2}^{2a}(-1)^b \cdot \varepsilon_{q}^c\cdot \varepsilon_{2q}^d\cdot(-1)^e\cdot(-1)^f \cdot (-1)^{gs}\varepsilon_{2}^g\\
				&=&	\varepsilon_{2}^{2a}  \varepsilon_{q}^c\varepsilon_{2q}^d\cdot(-1)^{b+e+f+gs} \varepsilon_{2}^g.
			\end{eqnarray*}
			for some $s\in \{0,1\}$.	Thus, $b+e+f+gs\equiv 0\pmod2$.	Since $\varepsilon_{2}$ is not a square in $k_2$, then $g=0$. Therefore $b+e+f\equiv 0\pmod2$ and
			$$\xi^2=\varepsilon_{2}^a\varepsilon_{p}^b \sqrt{\varepsilon_{q}}^c\sqrt{\varepsilon_{2q}}^d\sqrt{\varepsilon_{pq}}^e\sqrt{\varepsilon_{2pq}}^f.$$

			\noindent\ding{224} Let us apply the norm $N_{\KK/k_5}=1+\tau_1\tau_2$, with $k_5=\QQ(\sqrt{q}, \sqrt{2p})$. We have 
			$\sqrt{\varepsilon_{pq}}^{1+\tau_1\tau_2}=1$ and
			$\sqrt{\varepsilon_{2pq}}^{1+\tau_1\tau_2}={-\varepsilon_{2pq}}$. Then, by Table \ref{norm q=3 p=1mod4q}, we get:
			%	\begin{eqnarray}\label{T_3_-1_tau1tau2_N=-1}
			%	\begin{tabular}{ |c|c|c|c|c|c|c|c|c|}
			%		\hline
			%		$\varepsilon$&$\varepsilon_{2}$ &$ {\varepsilon_{p}}$&$\sqrt{\varepsilon_{q}}$&$\sqrt{\varepsilon_{2q}}$&$\sqrt{\varepsilon_{pq}}$&$\sqrt{\varepsilon_{2pq}}$ \\
			%		\hline
			%		$\varepsilon^{1+\tau_1\tau_2}$&$-1$ &$-1$&	$-\varepsilon_{q}$&$1$&$ 1$& ${-\varepsilon_{2pq}}$   \\
			%		\hline
			%	\end{tabular} 
			%	\end{eqnarray}
			\begin{eqnarray*}
				N_{\KK/k_5}(\xi^2)&=&(-1)^a\cdot (-1)^b\cdot (-1)^c\cdot \varepsilon_{  q}^c\cdot1\cdot1\cdot (-1)^f\cdot \varepsilon_{  2pq}^f\\
				&=&	 (-1)^{a+b+c+f}\varepsilon_{  q}^c\cdot \varepsilon_{  2pq}^f.
			\end{eqnarray*}
			Thus $a+b+c+f=0\pmod 2$. By Lemmas \ref{lm expressions of units under cond 1 eps_q, 2q} and \ref{lm expressions of units under cond 1}, none of $\varepsilon_{  q}$ and $\varepsilon_{  2pq}$ is a square in $k_5$. Then    $f=c$. Thus, $a=b$. Therefore, 
			$$\xi^2=\varepsilon_{2}^a\varepsilon_{p}^a \sqrt{\varepsilon_{q}}^f\sqrt{\varepsilon_{2q}}^d\sqrt{\varepsilon_{pq}}^e\sqrt{\varepsilon_{2pq}}^f,$$

			\noindent\ding{224} Let us apply the norm $N_{\KK/k_6}=1+\tau_1\tau_3$, with $k_6=\QQ(\sqrt{p}, \sqrt{2q})$. We have 
			$\sqrt{\varepsilon_{pq}}^{1+\tau_1\tau_3}=1$ and $\sqrt{\varepsilon_{2pq}}^{1+\tau_1\tau_3}=-\varepsilon_{2pq}$. Then, by Table \ref{norm q=3 p=1mod4q}, we get:
			%	\begin{eqnarray}\label{T_3_-1_tau1tau3_N=-1}
			%		\begin{tabular}{ |c|c|c|c|c|c|c|c|c|}
			%		\hline
			%		$\varepsilon$&$\varepsilon_{2}$ &$ {\varepsilon_{p}}$&$\sqrt{\varepsilon_{q}}$&$\sqrt{\varepsilon_{2q}}$&$\sqrt{\varepsilon_{pq}}$&$\sqrt{\varepsilon_{2pq}}$ \\
			%		\hline
			%		$\varepsilon^{1+\tau_1\tau_3}$&$-1$ &$\varepsilon_{p}^2$&	$1$&$-\varepsilon_{2q}$&$ 1$& ${-\varepsilon_{2pq}}$   \\
			%		\hline
			%	\end{tabular} 
			%	\end{eqnarray}		
			\begin{eqnarray*}
				N_{\KK/k_6}(\xi^2)&=&(-1)^a\cdot \varepsilon_{  p}^{2a}\cdot 1\cdot (-1)^d\cdot \varepsilon_{2  q}^d\cdot 1\cdot(-1)^f\cdot \varepsilon_{  2pq}^f\\
				&=&	\varepsilon_{  p}^{2a} (-1)^{a+d+f}\varepsilon_{2  q}^d\varepsilon_{  2pq}^f.
			\end{eqnarray*}
			Thus $a+d+f=0\pmod 2$. Again by Lemmas \ref{lm expressions of units under cond 1 eps_q, 2q} and \ref{lm expressions of units under cond 1}, none of $\varepsilon_{  2q}$ and $\varepsilon_{  2pq}$ is a square in $k_6$. Then 
			$d=f$. Therefore $a=0$ and 
			$$\xi^2=  \sqrt{\varepsilon_{q}}^f\sqrt{\varepsilon_{2q}}^f\sqrt{\varepsilon_{pq}}^e\sqrt{\varepsilon_{2pq}}^f.$$

			\noindent\ding{224} Let us apply the norm $N_{\KK/k_3}=1+\tau_2\tau_3$, with $k_3=\QQ(\sqrt{2}, \sqrt{pq})$. Note that 
			$\sqrt{\varepsilon_{pq}}^{1+\tau_2\tau_3}=\varepsilon_{pq}$ and $\sqrt{\varepsilon_{2pq}}^{1+\tau_2\tau_3}=\varepsilon_{2pq}$. Then, by Table \ref{norm q=3 p=1mod4q}, we have:
			%	\begin{eqnarray}\label{T_3_-1_tau2tau3_N=-1}
			%	\begin{tabular}{ |c|c|c|c|c|c|c|c|c|}
			%		\hline
			%		$\varepsilon$&$\varepsilon_{2}$ &${\varepsilon_{p}}$&$\sqrt{\varepsilon_{q}}$&$\sqrt{\varepsilon_{2q}}$&$\sqrt{\varepsilon_{pq}}$&$\sqrt{\varepsilon_{2pq}}$ \\
			%		\hline
			%		$\varepsilon^{1+\tau_2\tau_3}$&$\varepsilon_{2}^2$ &$-1$&	$-1$&$-1$&$ {\varepsilon_{pq}}$& ${\varepsilon_{2pq}}$   \\
			%		\hline
			%	\end{tabular} 
			%	\end{eqnarray}
			\begin{eqnarray*}
				N_{\KK/k_3}(\xi^2)&=& (-1)^{f}\cdot   (-1)^f\cdot \varepsilon_{pq}^e \cdot \varepsilon_{  2pq}^f\\
				&=&  \varepsilon_{pq}^e \cdot \varepsilon_{  2pq}^f.
			\end{eqnarray*}
			By   Lemma  \ref{lm expressions of units under cond 1}, both $ \varepsilon_{pq}$ and $\varepsilon_{  2pq}$	are squares in $k_3$. So we deduce nothing.
			% Therefore,
			%	$$\xi^2=  \sqrt{\varepsilon_{q}}^f\sqrt{\varepsilon_{2q}}^f\sqrt{\varepsilon_{pq}}^f\sqrt{\varepsilon_{2pq}}^f.$$
			
			\noindent\ding{224} Let us apply the norm $N_{\KK/k_4}=1+\tau_1$, with $k_4=\QQ(\sqrt{p}, \sqrt{q})$. Note that 
			$\sqrt{\varepsilon_{pq}}^{1+\tau_1}=-\varepsilon_{pq}$ and $\sqrt{\varepsilon_{2pq}}^{1+\tau_1}=1$. Then, by Table \ref{norm q=3 p=1mod4q}, we have:
			\begin{eqnarray*}
				N_{\KK/k_4}(\xi^2)&=& (-1)^{f}\cdot \varepsilon_{q}^f\cdot1\cdot   (-1)^e\cdot \varepsilon_{pq}^e \cdot 1\\
				&=&  (-1)^{f+e} \varepsilon_{q}^f\varepsilon_{pq}^e.
			\end{eqnarray*}
			Thus, $f=e$ and 	
			$$\xi^2=  \sqrt{\varepsilon_{q}}^f\sqrt{\varepsilon_{2q}}^f\sqrt{\varepsilon_{pq}}^f\sqrt{\varepsilon_{2pq}}^f.$$

			Let us show that  the square root of $\sqrt{\varepsilon_{q}}\sqrt{\varepsilon_{2q}}\sqrt{\varepsilon_{pq}}\sqrt{\varepsilon_{2pq}}$ is an element of $\KK$.	
			Note that one can easily check that the  $2$-class group of $k_5=\mathbb Q(\sqrt{2p}, \sqrt{q})$ is cyclic and by   Lemmas  \ref{wada's f.} and \ref{class numbers of quadratic field}, we have   $h_2(k_5)=\frac{1}{4}q(k_5)h_2(2p)h_2(q)h_2(2pq)=\frac{1}{2}q(k_5)h_2(2p) $.
			Using Lemmas \ref{lm expressions of units under cond 1} and \ref{lm expressions of units under cond 1 eps_q, 2q}, we show that $q(k_5)=2$. Thus $h_2(k_5)=h_2(2p)$. Since $\KK/k_5$ is an unramified quadratic extension, then
			
			\begin{eqnarray}\label{class nbr of K+}
				h_2(\KK)=\frac{1}{2}\cdot h_2(k_5)=\frac{1}{2}\cdot h_2(2p) .
			\end{eqnarray}
			Assume by absurd that $\sqrt{\varepsilon_{q}}\sqrt{\varepsilon_{2q}}\sqrt{\varepsilon_{pq}}\sqrt{\varepsilon_{2pq}}$ is not a square in $\KK$. Then $q(\KK)=2^5$.	
			By Lemma  \ref{wada's f.}, we have:
			\begin{eqnarray}
				h_2(\KK)&=&\frac{1}{2^{9}}q(\KK)  h_2(2) h_2(p) h_2(q)h_2(2p) h_2(2q)h_2(pq)  h_2(2pq)  \\
				&=&\frac{1}{2^{9}}\cdot 2^5\cdot 1\cdot 1 \cdot 1  \cdot h_2(2p) \cdot 1 \cdot 2 \cdot 2=\frac{1}{4}\cdot h_2(2p).\nonumber
			\end{eqnarray}
			Which is a contradiction with	\eqref{class nbr of K+}. Therefore $f=1$ and $\sqrt{\varepsilon_{q}} \sqrt{\varepsilon_{2q}} \sqrt{\varepsilon_{pq}} \sqrt{\varepsilon_{2pq}}$\label{is a square} is a square in $\KK$.
			Hence, we have   
			$$E_{\KK}=\langle-1,  \varepsilon_{2}, \varepsilon_{p},   \sqrt{\varepsilon_{q}}, \sqrt{\varepsilon_{2q}},  \sqrt{\varepsilon_{pq}} , \sqrt{\varepsilon_{2}\varepsilon_{p}\varepsilon_{2p}}, 
			\sqrt{\sqrt{\varepsilon_{q}} \sqrt{\varepsilon_{2q}} \sqrt{\varepsilon_{pq}} \sqrt{\varepsilon_{2pq}}}\rangle.$$

			\item Let us now prove the second item. Assume that	$N(\varepsilon_{2p})=1$.  By Lemma \ref{units of k1},     $\{\varepsilon_{2}, \varepsilon_{p},	\sqrt{\varepsilon_{2p}}\}$  is a  fundamental system of units of $k_1$ and one can easily deduce from Lemmas \ref{lm expressions of units under cond 1 eps_q, 2q} and \ref{lm expressions of units under cond 1} that $\{\varepsilon_{2}, \sqrt{\varepsilon_{q}}, \sqrt{\varepsilon_{2q}}\}$ and $\{ \varepsilon_{2}, 	\sqrt{\varepsilon_{pq}},\sqrt{\varepsilon_{2pq}}\}$ are respectively fundamental systems of units of $k_2$ and $k_3$.
			So we have   	$$E_{k_1}E_{k_2}E_{k_3}=\langle-1,  \varepsilon_{2}, \varepsilon_{p},   \sqrt{\varepsilon_{q}}, \sqrt{\varepsilon_{2q}},  \sqrt{\varepsilon_{pq}} ,\sqrt{ \varepsilon_{2pq}}, \sqrt{\varepsilon_{2p}}\rangle.$$	
			Put
			$$\xi^2=\varepsilon_{2}^a\varepsilon_{p}^b \sqrt{\varepsilon_{q}}^c\sqrt{\varepsilon_{2q}}^d\sqrt{\varepsilon_{pq}}^e\sqrt{\varepsilon_{2pq}}^f
			\sqrt{\varepsilon_{2p}}^g,$$
			where $a, b, c, d, e, f$ and $g$ are in $\{0, 1\}$. Assume that $\xi$ belongs to $\KK$. We shall proceed as above, by using the norm maps from $\KK$ to its subextensions. Note that these norms are already computed in the proof of the first item, and we shall use \eqref{T_3_-1_eqi2p_N=1} for the norms of  $\sqrt{\varepsilon_{2p}}$.
			Let   $u$ be the integer defined in Lemma \ref{lm noms esp_2p}.	
			
			\noindent\ding{224}	Let us start	by applying   the norm map $N_{\KK/k_2}=1+\tau_2$. We have	
			\begin{eqnarray*}
				N_{\KK/k_2}(\xi^2)&=&
				\varepsilon_{2}^{2a}\cdot (-1)^b \cdot \varepsilon_{q}^c\cdot \varepsilon_{2q}^d\cdot(-1)^e\cdot(-1)^f \cdot (-1)^{gu}\\\
				&=&	\varepsilon_{2}^{2a}  \varepsilon_{q}^c\varepsilon_{2q}^d\cdot(-1)^{b+e+f+gu} .
			\end{eqnarray*}
			Thus,  $	b+e+f+gu\equiv 0\pmod2  $.
			
			\noindent\ding{224}	Let   us apply    the norm map $N_{\KK/k_5}=1+\tau_1\tau_2$, with $k_5=\QQ(\sqrt{q}, \sqrt{2p})$.	We have	
			\begin{eqnarray*}
				N_{\KK/k_5}(\xi^2)&=&
				(-1)^a  \cdot (-1)^b \cdot (-1)^c  \cdot\varepsilon_{q}^c\cdot1\cdot1 \cdot(-1)^f \cdot\varepsilon_{2pq}^f\cdot(-1)^g\cdot \varepsilon_{2p}^{g} \\\
				&=&  (-1)^{a+b+c+f+g } \cdot\varepsilon_{q}^c\varepsilon_{2pq}^f\varepsilon_{2p}^{g}.
			\end{eqnarray*}
			
			Thus, $a+b+c+f+g\equiv 0\pmod2$ and  $c+f+ g\equiv 0\pmod2$.	Therefore, $a=b$  and 
			$$\xi^2=\varepsilon_{2}^a\varepsilon_{p}^a \sqrt{\varepsilon_{q}}^c\sqrt{\varepsilon_{2q}}^d\sqrt{\varepsilon_{pq}}^e\sqrt{\varepsilon_{2pq}}^f
			\sqrt{\varepsilon_{2p}}^g.$$
			with $c+f+ g\equiv 0\pmod2$.

			\noindent\ding{224}	Let    us apply      the norm map $N_{\KK/k_6}=1+\tau_1\tau_3$, with $k_6=\QQ(\sqrt{p}, \sqrt{2q})$ .	We have	
			\begin{eqnarray*}
				N_{\KK/k_6}(\xi^2)&=&
				(-1)^a  \cdot \varepsilon_{p}^{2a} \cdot 1 \cdot(-1)^d \cdot\varepsilon_{2q}^d\cdot1  \cdot(-1)^f \cdot\varepsilon_{2pq}^f\cdot (-1)^{gu}\cdot (-1)^{g}\\\
				&=&\varepsilon_{p}^{2a}\cdot (-1)^{a + d+f+ug+g} \cdot\varepsilon_{2q}^d \varepsilon_{2pq}^f.
			\end{eqnarray*}
			Thus, $a + d+f+ug+g\equiv 0\pmod2$ and $d=f$. Therefore,   $a +ug+g\equiv 0\pmod2$ and
			
			$$\xi^2=\varepsilon_{2}^a\varepsilon_{p}^a \sqrt{\varepsilon_{q}}^c\sqrt{\varepsilon_{2q}}^f\sqrt{\varepsilon_{pq}}^e\sqrt{\varepsilon_{2pq}}^f
			\sqrt{\varepsilon_{2p}}^g.$$

			\noindent\ding{224}	Let   us apply      the norm map $N_{\KK/k_3}=1+\tau_2\tau_3$, with $k_3=\QQ(\sqrt{2}, \sqrt{pq})$.	We have		
			\begin{eqnarray*}
				N_{\KK/k_3}(\xi^2)&=&
				\varepsilon_{2}^{2a} \cdot (-1)^{a} \cdot (-1)^c \cdot(-1)^f \cdot\varepsilon_{pq}^e    \cdot\varepsilon_{2pq}^f\cdot (-1)^{gu}\\\
				&=&\varepsilon_{2}^{2a} \varepsilon_{pq}^e \varepsilon_{2pq}^f \cdot (-1)^{a+c+f+ug}.
			\end{eqnarray*}
			Thus, $a+c+f+ug\equiv 0\pmod2$. 
			Therefore, from these discussions, it follows that  we have:
			\begin{eqnarray}
				a+e+f+ug\equiv 0\pmod 2\label{eq1}\\
				c+f+g\equiv 0\pmod 2\label{eq2}\\
				a+ug+g\equiv 0\pmod 2\label{eq3}\\
				a+c+f+ug\equiv 0\pmod2\label{eq4}
			\end{eqnarray}
			
			From 	\eqref{eq1}, \eqref{eq3}   and \eqref{eq2}, we deduce that $e=c$. Thus 
			$$\xi^2=\varepsilon_{2}^a\varepsilon_{p}^a \sqrt{\varepsilon_{q}}^c\sqrt{\varepsilon_{2q}}^f\sqrt{\varepsilon_{pq}}^c\sqrt{\varepsilon_{2pq}}^f
			\sqrt{\varepsilon_{2p}}^g.$$

			On the other hand, 	as above, we show that the $2$-class group of $k_5$ is cyclic and that we have:	
			\begin{eqnarray}\label{class nbr of K+2}
				h_2(\KK)=\frac{1}{2}\cdot h_2(k_5)&=&\frac{1}{2}\cdot\frac{1}{4}q(k_5)h_2(2p)h_2(q)h_2(2pq)\nonumber\\
				&=&\frac{1}{2}\cdot\frac{1}{4}\cdot 4\cdot h_2(2p)\cdot 1  \cdot 2\nonumber\\
				&=& h_2(2p),\label{cln= h(2p)}
			\end{eqnarray}
			and	by class number formula (Lemma  \ref{wada's f.}), we have
			\begin{eqnarray}
				h_2(\KK)&=&\frac{1}{2^{9}}q(\KK)  h_2(2) h_2(p) h_2(q)h_2(2p) h_2(2q)h_2(pq)  h_2(2pq) \nonumber \\
				&=&\frac{1}{2^{9}}\cdot q(\KK)\cdot 1\cdot 1 \cdot 1  \cdot h_2(2p) \cdot 1 \cdot 2 \cdot 2=\frac{1}{2^7}\cdot q(\KK)\cdot h_2(2p).\nonumber
			\end{eqnarray}
			
			Therefore, $q(\KK)=2^7 $.

			Assume that each solution has $g=0$, then by \eqref{eq3} $a=0$. So  by \eqref{eq2} and \eqref{eq1}  $f=c=e$. 
			Therefore, $\xi^2=  \sqrt{\varepsilon_{q}}^c\sqrt{\varepsilon_{2q}}^c\sqrt{\varepsilon_{pq}}^c\sqrt{\varepsilon_{2pq}}^c.$ Thus, $q(\KK)=2^5$ or $2^6$. Which is absurd.
			This implies that there must be a solution having  $g=1$. So by \eqref{eq2}, $c\not= f$, and by \eqref{eq3} $a\equiv 1+u \pmod 2$. Finally, we have 
			$$\xi^2=\varepsilon_{2}^a\varepsilon_{p}^a \sqrt{\varepsilon_{q}}^c \sqrt{\varepsilon_{pq}}^c 
			\sqrt{\varepsilon_{2p}}  \text{ or } \varepsilon_{2}^a\varepsilon_{p}^a  \sqrt{\varepsilon_{2q}}^f \sqrt{\varepsilon_{2pq}}^f
			\sqrt{\varepsilon_{2p}} ,$$ 
			Since $q(\KK)=2^7 $, then  both of $\varepsilon_{2}^a\varepsilon_{p}^a \sqrt{\varepsilon_{q}} \sqrt{\varepsilon_{pq}}
			\sqrt{\varepsilon_{2p}}$ and $\varepsilon_{2}^a\varepsilon_{p}^a  \sqrt{\varepsilon_{2q}} \sqrt{\varepsilon_{2pq}}
			\sqrt{\varepsilon_{2p}}$ are squares in $\KK$, where $a\equiv 1+u \pmod 2$ and $u$ is defined in Lemma \ref{lm noms esp_2p}. Which completes the proof.
		\end{enumerate}
	\end{proof}
	\subsection{\bf The case: $p\equiv 1 \pmod8$,  $q\equiv 3 \pmod8$ and $\genfrac(){}{0}{p}{q} =1$} $\,$ 	\\
	For the sake that the reader could follow the proofs of this section, we suggest to start by reading carefully the proof of Theorem \ref{T_3_-1} which is exposed with some helpful  details.

	\newpage
	The following table summarizes very useful computations which we shall use frequently.
	{\begin{table}[H]
			\renewcommand{\arraystretch}{2.5}
			%\setlength{\tabcolsep}{1cm}
			%\footnotesize
			%\tiny		
			%\begin{center}%\rotatebox{-90}{%We construct the following table:\\
			\begin{tabular}{|c|c|c|c|c|c|c|c|c|c}
				\hline
				$\varepsilon$  & Conditions  & $\varepsilon^{1+\tau_2}$   & $\varepsilon^{1+\tau_1\tau_2}$& $\varepsilon^{1+\tau_1\tau_3}$& $\varepsilon^{1+\tau_2\tau_3}$& $\varepsilon^{1+\tau_1}$  \\ \hline

				\multirow{3}{*}{ $\sqrt{\varepsilon_{2pq}} $}	  &$(x-1)$ is a square in $\mathbb{N}$  & $-1$ & $-\varepsilon_{2pq}$& $-\varepsilon_{2pq}$ & $ \varepsilon_{2pq}$& $1$ \\ \cline{2-7}
				
				&$p(x-1)$ is a square in $\mathbb{N}$   & $1$ & $\varepsilon_{2pq}$&$-\varepsilon_{2pq}$ & $-\varepsilon_{2pq}$& $1$ \\ \cline{2-7}
				
				&$2p(x+1)$ is a square  in $\mathbb{N}$   & $-1$ & $-\varepsilon_{2pq}$& $\varepsilon_{2pq}$ & $-\varepsilon_{2pq}$& $1$ \\ \hline

				\multirow{3}{*}{  	$\sqrt{\varepsilon_{pq}}$} & $(v-1)$ is a square  in $\mathbb{N}$  & $-1$ & $1$& $1$ & $ \varepsilon_{pq}$&$ -\varepsilon_{pq}$ \\ \cline{2-7}
				
				& $p(v-1)$ is a square  in $\mathbb{N}$  & $1$ & $-1$& $1$ & $-\varepsilon_{pq}$& $-\varepsilon_{pq}$ \\ \cline{2-7}
				
				& $2p(v+1)$ is a square in $\mathbb{N}$ & $-1$ & $-1$& $1$ & $-\varepsilon_{pq}$&$\varepsilon_{pq}$ \\ \hline

			\end{tabular}
			\vspace*{0.2cm}
			\caption{Norms maps on units } \label{tab3} 
			%}
			%\caption{  Norms   when $q_1$ and $q_2$ verify conditions  $(\ref{cond 1})$ }\label{table1} 
			%	\end{center}
	\end{table} }

	Let $p\equiv 1\pmod{8}$ and $q\equiv3\pmod 8$ be two primes such that     $\genfrac(){}{0}{p}{q} =1$. Then, by Lemmas \ref{wada's f.} and  \ref{class numbers of quadratic field},  we have:
	\begin{eqnarray}\label{classnumberofKK}
		h_2(\KK)=	\frac{1}{2^{9}}q(\KK)\cdot h_2(2p)\cdot h_2(pq)\cdot h_2(2pq).
	\end{eqnarray}
	
	\begin{remark}
		Notice that by Lemma \ref{lm expressions of units under cond 3} there are nine possibilities which will be covered case by case by the following Theorems \ref{T_3_1_C1}-\ref{T_3_1_C9}.		\end{remark}
	\begin{theorem}\label{T_3_1_C1} Let $p\equiv 1\pmod{8}$ and $q\equiv3\pmod 8$ be two primes such that     $\genfrac(){}{0}{p}{q} =1$. 
		Put     $\KK=\QQ(\sqrt 2, \sqrt{p}, \sqrt{q} )$.  Assume furthermore that $x-1$ and $v-1$ are squares in $\mathbb{N}$, where $x$ and $v$ are defined in Lemma \ref{lm expressions of units under cond 3}.
		\begin{enumerate}[\rm 1.]
			\item If $N(\varepsilon_{2p})=-1$,   we have
			\begin{enumerate}[\rm $\bullet$]	
				\item The unit group of $\KK$ is :
				$$E_{\KK}=\langle -1,  \varepsilon_{2}, \varepsilon_{p},   \sqrt{\varepsilon_{q}}, \sqrt{\varepsilon_{2q}},  \sqrt{\varepsilon_{pq}} , \sqrt{\varepsilon_{2}\varepsilon_{p}\varepsilon_{2p}}, 
				\sqrt{\sqrt{\varepsilon_{q}}^a \sqrt{\varepsilon_{2q}}^a \sqrt{\varepsilon_{pq}}^a \sqrt{\varepsilon_{2pq}}^{1+b}}\rangle$$
				where $ a$, $b\in \{0,1\}$ are such that $a\not=b$ and $a =1$ if and only if $\sqrt{\varepsilon_{q}}\sqrt{\varepsilon_{2q}}\sqrt{\varepsilon_{pq}}\sqrt{\varepsilon_{2pq}}$ is a square in $\KK$.
				\item The $2$-class number  of  $\KK$ equals $ \frac{1}{2^{4-a}} h_2(2p)h_2(pq)h_2(2pq)$.
			\end{enumerate}
			\item Assume that $N(\varepsilon_{2p})=1$  and define  $a\in\{0,1\}$ to satisfy $a \equiv 1+u\pmod2$.   Then we have
			
			\begin{enumerate}[\rm $\bullet$]
				\item  The unit group of $\KK$ is :
				$$E_{\KK}=\langle -1,  \varepsilon_{2}, \varepsilon_{p},   \sqrt{\varepsilon_{q}}, \sqrt{\varepsilon_{2q}},  \sqrt{\varepsilon_{pq}} , \sqrt{\varepsilon_{2}^{ar'}\varepsilon_{p}^{ar'} \sqrt{\varepsilon_{q}}^{r'} \sqrt{\varepsilon_{pq}}^{r'}
					\sqrt{\varepsilon_{2p}}^{1+s'}},
				\sqrt{\varepsilon_{2}^{ar}\varepsilon_{p}^{ar}  \sqrt{\varepsilon_{2q}}^{r} \sqrt{\varepsilon_{2pq}}^{1+s}
					\sqrt{\varepsilon_{2p}}^r}      \rangle$$
				where $ r, r', s$, $s'\in \{0,1\}$ are such that $r\not=s$ $($resp. $r'\not=s'$$)$ and $r =1$ $($resp.  $r' =1$$)$ if and only if  $ \varepsilon_{2}^{a}\varepsilon_{p}^{a}  \sqrt{\varepsilon_{2q}} \sqrt{\varepsilon_{2pq}}
				\sqrt{\varepsilon_{2p}}$ (resp. $\varepsilon_{2}^{a}\varepsilon_{p}^{a}  \sqrt{\varepsilon_{ q}} \sqrt{\varepsilon_{pq}}
				\sqrt{\varepsilon_{2p}}$) is a square in $\KK$.
				\item  The $2$-class number  of  $\KK$ equals $ \frac{1}{2^{4-r-r'}} h_2(2p)h_2(pq)h_2(2pq)$.
			\end{enumerate}
		\end{enumerate}
	\end{theorem}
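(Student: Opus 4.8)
The plan is to follow verbatim the scheme of the proof of Theorem~\ref{T_3_-1}, now under the hypothesis $\genfrac(){}{0}{p}{q}=1$, so that Table~\ref{tab3} replaces Table~\ref{norm q=3 p=1mod4q} and Lemma~\ref{class numbers of quadratic field}(2) supplies $4\mid h_2(pq)$ and $4\mid h_2(2pq)$. Because we assume $x-1$ and $v-1$ are squares in $\NN$, Lemma~\ref{lm expressions of units under cond 3} guarantees that $\sqrt{\varepsilon_{pq}},\sqrt{\varepsilon_{2pq}}\in\KK$ and pins down the relevant norms to the first row of each block of Table~\ref{tab3}; this is what makes the bookkeeping tractable in this first subcase.

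First I would record a fundamental system of units for each intermediate field. For $k_2=\QQ(\sqrt2,\sqrt q)$ and $k_3=\QQ(\sqrt2,\sqrt{pq})$, Lemmas~\ref{lm expressions of units under cond 1 eps_q, 2q} and \ref{lm expressions of units under cond 3} give $\{\varepsilon_2,\sqrt{\varepsilon_q},\sqrt{\varepsilon_{2q}}\}$ and $\{\varepsilon_2,\sqrt{\varepsilon_{pq}},\sqrt{\varepsilon_{2pq}}\}$ respectively; for $k_1=\QQ(\sqrt2,\sqrt p)$ I would invoke Lemma~\ref{units of k1}, adjoining $\sqrt{\varepsilon_2\varepsilon_p\varepsilon_{2p}}$ when $N(\varepsilon_{2p})=-1$ and $\sqrt{\varepsilon_{2p}}$ when $N(\varepsilon_{2p})=1$. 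Multiplying these yields $E_{k_1}E_{k_2}E_{k_3}$ with eight generators, and the remaining task is to detect which square roots of products of generators lie in $\KK$.

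Writing a putative square $\xi^2=\varepsilon_2^a\varepsilon_p^b\sqrt{\varepsilon_q}^c\sqrt{\varepsilon_{2q}}^d\sqrt{\varepsilon_{pq}}^e\sqrt{\varepsilon_{2pq}}^f(\ast)^g$ with exponents in $\{0,1\}$ (where $\ast$ is the extra $k_1$-generator), I would successively apply the five norm maps $1+\tau_2$, $1+\tau_1\tau_2$, $1+\tau_1\tau_3$, $1+\tau_2\tau_3$, $1+\tau_1$ to the appropriate fixed fields. Reading their images off Table~\ref{tab3} (and off \eqref{T_3_-1_eqi2p_N=1} for $\sqrt{\varepsilon_{2p}}$ in the $N(\varepsilon_{2p})=1$ case), and using that the quadratic units involved are non-squares in the relevant fixed fields by Lemmas~\ref{lm expressions of units under cond 3} and \ref{lm expressions of units under cond 1 eps_q, 2q}, the resulting parity congruences collapse the admissible exponent vectors down to a small family, exactly as in Theorem~\ref{T_3_-1}.

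The decisive, and hardest, step is the class-number bookkeeping, where the contrast with the case $\genfrac(){}{0}{p}{q}=-1$ becomes essential: since now $4\mid h_2(pq)$ and $4\mid h_2(2pq)$, the identity \eqref{classnumberofKK} no longer forces the surviving exponent, and the norm maps alone cannot decide squareness in $\KK$ of $\sqrt{\varepsilon_q}\sqrt{\varepsilon_{2q}}\sqrt{\varepsilon_{pq}}\sqrt{\varepsilon_{2pq}}$ (when $N(\varepsilon_{2p})=-1$), respectively of the two elements $\varepsilon_2^a\varepsilon_p^a\sqrt{\varepsilon_q}\sqrt{\varepsilon_{pq}}\sqrt{\varepsilon_{2p}}$ and $\varepsilon_2^a\varepsilon_p^a\sqrt{\varepsilon_{2q}}\sqrt{\varepsilon_{2pq}}\sqrt{\varepsilon_{2p}}$ (when $N(\varepsilon_{2p})=1$, with $a\equiv 1+u\pmod2$ fixed through \eqref{T_3_-1_eqi2p_N=1}). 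I would therefore introduce the indicators $a$ (resp. $r,r'$) recording precisely these squarenesses, with the complementary indices $b$ (resp. $s,s'$) forced by $a\neq b$ (resp. $r\neq s$, $r'\neq s'$). Substituting each possibility into $q(\KK)$ and back into \eqref{classnumberofKK} via Lemma~\ref{wada's f.} then produces the two admissible values of the unit index, giving simultaneously the stated generating sets and the class numbers $\frac{1}{2^{4-a}}h_2(2p)h_2(pq)h_2(2pq)$ and $\frac{1}{2^{4-r-r'}}h_2(2p)h_2(pq)h_2(2pq)$, which completes the argument.
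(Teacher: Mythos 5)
Your proposal matches the paper's own proof, which for this theorem simply observes that under the hypothesis that $x-1$ and $v-1$ are squares in $\NN$ the norms of $\sqrt{\varepsilon_{pq}}$ and $\sqrt{\varepsilon_{2pq}}$ coincide with those used in Theorem \ref{T_3_-1}, so the same norm-map eliminations go through verbatim, and the only substantive change is exactly the one you flag: since $4\mid h_2(pq)$ and $4\mid h_2(2pq)$ when $\genfrac(){}{0}{p}{q}=1$, the class-number comparison can no longer force the remaining squareness, whence the indicator exponents $a,b$ (resp. $r,r',s,s'$) and the formula $h_2(\KK)=\frac{1}{2^{4-a}}h_2(2p)h_2(pq)h_2(2pq)$ via \eqref{classnumberofKK}. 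Your write-up is in fact more explicit than the paper's (which defers almost entirely to the computations of Theorem \ref{T_3_-1}), and no step is missing.
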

	\begin{proof}
		\begin{enumerate}[\rm 1.]
			\item  Assume that   $N(\varepsilon_{2p})=-1$.  By Lemma \ref{units of k1},     $\{\varepsilon_{2}, \varepsilon_{p},	\sqrt{\varepsilon_{2}\varepsilon_{p}\varepsilon_{2p}}\}$  is a  fundamental system of units of $k_1$. One can easily deduce from Lemmas \ref{lm expressions of units under cond 1 eps_q, 2q} and \ref{lm expressions of units under cond 1} that $\{\varepsilon_{2}, \sqrt{\varepsilon_{q}}, \sqrt{\varepsilon_{2q}}\}$ and $\{ \varepsilon_{2}, 	\sqrt{\varepsilon_{pq}},\sqrt{\varepsilon_{2pq}}\}$ are respectively fundamental systems of units of $k_2$ and $k_3$.
			It follows that,  	$$E_{k_1}E_{k_2}E_{k_3}=\langle-1,  \varepsilon_{2}, \varepsilon_{p},   \sqrt{\varepsilon_{q}}, \sqrt{\varepsilon_{2q}},  \sqrt{\varepsilon_{pq}} ,\sqrt{ \varepsilon_{2pq}}, \sqrt{\varepsilon_{2}\varepsilon_{p}\varepsilon_{2p}}\rangle.$$	
			Let  $\xi$ be an element of $\KK$ which is the  square root of an element of $E_{k_1}E_{k_2}E_{k_3}$. Therefore, we can assume that
			$$\xi^2=\varepsilon_{2}^a\varepsilon_{p}^b \sqrt{\varepsilon_{q}}^c\sqrt{\varepsilon_{2q}}^d\sqrt{\varepsilon_{pq}}^e\sqrt{\varepsilon_{2pq}}^f
			\sqrt{\varepsilon_{2}\varepsilon_{p}\varepsilon_{2p}}^g,$$
			where $a, b, c, d, e, f$ and $g$ are in $\{0, 1\}$.
			As  $x-1$ and $v-1$ are squares in $\mathbb{N}$, then clearly  with the same computations as in the proof of Theorem \ref{T_3_-1}, we get :
			$$\xi^2= \sqrt{\varepsilon_{q}}^f\sqrt{\varepsilon_{2q}}^f\sqrt{\varepsilon_{pq}}^f\sqrt{\varepsilon_{2pq}}^f.$$    
			So the first item.
			\item The same   computations   in the second part of the proof of Theorem \ref{T_3_-1} give the second item.
		\end{enumerate}
		The part concerning the $2$-class number follows from the above discussions  and \eqref{classnumberofKK}.
	\end{proof}

	\begin{theorem}\label{T_3_1_C2} Let $p\equiv 1\pmod{8}$ and $q\equiv3\pmod 8$ be two primes such that     $\genfrac(){}{0}{p}{q} =1$. 
		Put     $\KK=\QQ(\sqrt 2, \sqrt{p}, \sqrt{q} )$.  Assume furthermore that $x-1$ and $p(v-1)$ are squares in $\mathbb{N}$, where $x$ and $v$ are defined in Lemma \ref{lm expressions of units under cond 3}.
		Then
		\begin{enumerate}[\rm 1.]
			\item Assume that $N(\varepsilon_{2p})=-1$.  We have
			\begin{enumerate}[\rm $\bullet$]
				
				\item The unit group of $\KK$ is :
				$$E_{\KK}=\langle -1,   \varepsilon_{2}, \varepsilon_{p} ,    \sqrt{\varepsilon_{q}}, \sqrt{\varepsilon_{2q}},\sqrt{ \varepsilon_{pq}} ,\sqrt{ \varepsilon_{2pq}}, \sqrt{\varepsilon_{2}\varepsilon_{p}\varepsilon_{2p}} \rangle.$$
				\item  The $2$-class number  of  $\KK$ equals $ \frac{1}{2^{4}} h_2(2p)h_2(pq)h_2(2pq)$.  
			\end{enumerate}
			\item Assume that $N(\varepsilon_{2p})=1$ and let $a\in\{0,1\}$ be such that $a \equiv 1+u\pmod2$.   We have
			
			\begin{enumerate}[\rm $\bullet$]
				\item  The unit group of $\KK$ is :
				$$E_{\KK}=\langle -1,   \varepsilon_{2}, \varepsilon_{p},   \sqrt{\varepsilon_{q}}, \sqrt{\varepsilon_{2q}},  \sqrt{\varepsilon_{pq}} ,\sqrt{ \varepsilon_{2pq}},    \sqrt{\varepsilon_{2}^{a\alpha}\varepsilon_{p}^{a\alpha}     \sqrt{\varepsilon_{2q}}^\alpha\sqrt{\varepsilon_{2pq}}^\alpha
					\sqrt{\varepsilon_{2p}}^{1+\gamma} }  \rangle$$
				where $\alpha$, $\gamma\in \{0,1\}$ are such that $\alpha\not=\gamma$ and $\alpha =1$ if and only if  $\varepsilon_{2}^a \varepsilon_{p}^a      \sqrt{\varepsilon_{2q}} \sqrt{\varepsilon_{2pq}}\sqrt{\varepsilon_{2p}}$ is a square in $\KK$.
				\item  The $2$-class number  of  $\KK$ equals $ \frac{1}{2^{4-\alpha}} h_2(2p)h_2(pq)h_2(2pq)$.
			\end{enumerate}
		\end{enumerate}
	\end{theorem}
	\begin{proof}We shall make use of \eqref{T_3_-1_eqi2p_N=1}  and Tables \ref{norm q=3 p=1mod4q}  and \ref{tab3}.
		\begin{enumerate}[\rm 1.]
			\item Assume that   $N(\varepsilon_{2p})=-1$.
			Note that $\{\varepsilon_{2}, \varepsilon_{p},	\sqrt{\varepsilon_{2}\varepsilon_{p}\varepsilon_{2p}}\}$, is  a fundamental system  of units of $k_1$.
			Using 	Lemma \ref{lm expressions of units under cond 3}, we show that $\{\varepsilon_{2}, \sqrt{\varepsilon_{q}}, \sqrt{\varepsilon_{2q}}\}$ and $\{ \varepsilon_{2}, 	 \varepsilon_{pq},\sqrt{\varepsilon_{2pq}}\}$ are respectively fundamental systems of units of $k_2$ and $k_3$.
			So we have:
			$$E_{k_1}E_{k_2}E_{k_3}=\langle-1,  \varepsilon_{2}, \varepsilon_{p} ,   \varepsilon_{pq} ,   \sqrt{\varepsilon_{q}}, \sqrt{\varepsilon_{2q}} ,\sqrt{ \varepsilon_{2pq}}, \sqrt{\varepsilon_{2}\varepsilon_{p}\varepsilon_{2p}}\rangle.$$	
			Let  $\xi$ be an element of $\KK$ which is the  square root of an element of $E_{k_1}E_{k_2}E_{k_3}$. Therefore, we can assume that
			$$\xi^2=\varepsilon_{2}^a\varepsilon_{p}^b  \varepsilon_{pq}^c  \sqrt{\varepsilon_{q}}^d\sqrt{\varepsilon_{2q}}^e\sqrt{\varepsilon_{2pq}}^f
			\sqrt{\varepsilon_{2}\varepsilon_{p}\varepsilon_{2p}}^g,$$
			where $a, b, c, d, e, f$ and $g$ are in $\{0, 1\}$.
			
			\noindent\ding{224}  Let us start	by applying   the norm map $N_{\KK/k_2}=1+\tau_2$.
			We have 	%$\sqrt{\varepsilon_{pq}}^{1+\tau_2}=1$, 	$\sqrt{\varepsilon_{2pq}}^{1+\tau_2}=-1$ and
			$\sqrt{\varepsilon_{2}\varepsilon_{p}\varepsilon_{2p}}^{1+ \tau_2}=(-1)^v\varepsilon_{2}$, for some $v\in\{0,1\}$. Then,  by  Tables \ref{norm q=3 p=1mod4q}  and \ref{tab3}, we get:
			%	\begin{eqnarray}\label{T_3_1_tau2_N=1 C2}
			%	\begin{tabular}{ |c|c|c|c|c|c|c|c|c|}
			%		\hline
			%		$\varepsilon$&$\varepsilon_{2}$ &$ {\varepsilon_{p}}$&   $ \varepsilon_{pq}$& $\sqrt{\varepsilon_{q}}$&$\sqrt{\varepsilon_{2q}}$&$\sqrt{\varepsilon_{2pq}}$&$\sqrt{\varepsilon_{2}\varepsilon_{p}\varepsilon_{2p}}$ \\
			%	\hline
			%	$\varepsilon^{1+\tau_2}$&$\varepsilon_{2}^2$ &$-1$&	$1$& $\varepsilon_{q}$  &$\varepsilon_{2q}$&  $-1$&$(-1)^v\varepsilon_{2}$ \\
			%		\hline
			%	\end{tabular} 
			%	\end{eqnarray}
			%	Thus 
			\begin{eqnarray*}
				N_{\KK/k_2}(\xi^2)&=&
				\varepsilon_{2}^{2a}\cdot (-1)^b \cdot1\cdot \varepsilon_{q}^d\cdot \varepsilon_{2q}^e\cdot(-1)^f  \cdot (-1)^{gv}\varepsilon_{2}^g\\\
				&=&	\varepsilon_{2}^{2a}  \varepsilon_{q}^c\varepsilon_{2q}^d\cdot(-1)^{b +f+gv}\varepsilon_{2}^g .
			\end{eqnarray*} 
			Thus,  $b +f+gv\equiv 0 \pmod 2$ and $g=0$. Therefore, $b=f$ and 
			$$\xi^2=\varepsilon_{2}^a\varepsilon_{p}^f  \varepsilon_{pq}^c  \sqrt{\varepsilon_{q}}^d\sqrt{\varepsilon_{2q}}^e\sqrt{\varepsilon_{2pq}}^f.$$
			
			\noindent\ding{224} Let us apply the norm $N_{\KK/k_5}=1+\tau_1\tau_2$, with $k_5=\QQ(\sqrt{q}, \sqrt{2p})$.% We have 
			%	$\sqrt{\varepsilon_{pq}}^{1+\tau_1\tau_2}=-1$ and 
			%	$\sqrt{\varepsilon_{2pq}}^{1+\tau_1\tau_2}=-\varepsilon_{2pq}$. 
			Then, by  Tables \ref{norm q=3 p=1mod4q}  and \ref{tab3}, we get:
			%\begin{eqnarray}\label{T_3_1_tau1tau2_N=-1 C2}
			%	\begin{tabular}{ |c|c|c|c|c|c|c|c|c|}
			%		\hline
			%		$\varepsilon$&$\varepsilon_{2}$ &$ {\varepsilon_{p}}$ & $\varepsilon_{pq}$ &$\sqrt{\varepsilon_{q}}$&$\sqrt{\varepsilon_{2q}}$& $\sqrt{\varepsilon_{2pq}}$ \\
			%		\hline
			%		$\varepsilon^{1+\tau_1\tau_2}$&$-1$ &$-1$& $1$	& $-\varepsilon_{q}$&$1$&  ${-\varepsilon_{2pq}}$   \\
			%		\hline
			%	\end{tabular} 
			%	\end{eqnarray}
			\begin{eqnarray*}
				N_{\KK/k_5}(\xi^2)&=&(-1)^a\cdot (-1)^f\cdot 1\cdot (-1)^d\cdot \varepsilon_{  q}^d\cdot1 \cdot (-1)^f\cdot \varepsilon_{  2pq}^f\\
				&=&	 (-1)^{a +d }\varepsilon_{  q}^d\cdot \varepsilon_{  2pq}^f.
			\end{eqnarray*}
			Thus $a=d=f$ and 
			$$\xi^2=\varepsilon_{2}^f\varepsilon_{p}^f  \varepsilon_{pq}^c  \sqrt{\varepsilon_{q}}^f\sqrt{\varepsilon_{2q}}^e\sqrt{\varepsilon_{2pq}}^f.$$

			\noindent\ding{224} Let us apply the norm $N_{\KK/k_6}=1+\tau_1\tau_3$, with $k_6=\QQ(\sqrt{p}, \sqrt{2q})$. 
			%We have 
			%	$\sqrt{\varepsilon_{pq}}^{1+\tau_1\tau_3}=1$ and 
			%	$\sqrt{\varepsilon_{2pq}}^{1+\tau_1\tau_3}=-\varepsilon_{2pq}$. Then,
			By   Tables \ref{norm q=3 p=1mod4q}  and \ref{tab3}, we get: 
			%	\begin{eqnarray}\label{T_3_1_tau1tau3_N=-1 C2}
			%	\begin{tabular}{ |c|c|c|c|c|c|c|c|c|}
			%		\hline
			%		$\varepsilon$&$\varepsilon_{2}$ &$ {\varepsilon_{p}}$&$\varepsilon_{pq}$ &$\sqrt{\varepsilon_{q}}$&$\sqrt{\varepsilon_{2q}}$ &$\sqrt{\varepsilon_{2pq}}$ \\
			%		\hline
			%		$\varepsilon^{1+\tau_1\tau_3}$&$-1$ &$\varepsilon_{p}^2$ &$1$ &	$1$&$-\varepsilon_{2q}$&  ${-\varepsilon_{2pq}}$   \\
			%		\hline
			%	\end{tabular} 
			%	\end{eqnarray}
			\begin{eqnarray*}
				N_{\KK/k_6}(\xi^2)&=&(-1)^f\cdot \varepsilon_{  p}^{2f}\cdot 1\cdot 1\cdot (-1)^e\cdot \varepsilon_{2  q}^e\cdot  (-1)^f\cdot \varepsilon_{  2pq}^f\\
				&=&	\varepsilon_{  p}^{2a} (-1)^{e}\varepsilon_{2  q}^e\varepsilon_{  2pq}^f.
			\end{eqnarray*}
			Thus $e=f=0$. Hence 
			$$\xi^2=   \varepsilon_{pq}^c .$$
			
			By   Lemma \ref{lm expressions of units under cond 3} $\varepsilon_{pq}$ is a square in $\KK$, therefore
			$$E_{\KK} =\langle-1,  \varepsilon_{2}, \varepsilon_{p} ,    \sqrt{\varepsilon_{q}}, \sqrt{\varepsilon_{2q}},\sqrt{ \varepsilon_{pq}} ,\sqrt{ \varepsilon_{2pq}}, \sqrt{\varepsilon_{2}\varepsilon_{p}\varepsilon_{2p}}\rangle.$$

			\item Assume that 	$N(\varepsilon_{2p})=1$. 	Note that $\{\varepsilon_{2}, \varepsilon_{p},	\sqrt{\varepsilon_{2p}}\}$, is  a fundamental system  of units of $k_1$.
			Using 	Lemma \ref{lm expressions of units under cond 3}, we show that $\{\varepsilon_{2}, \sqrt{\varepsilon_{q}}, \sqrt{\varepsilon_{2q}}\}$ and $\{ \varepsilon_{2}, 	 \varepsilon_{pq},\sqrt{\varepsilon_{2pq}}\}$ are respectively fundamental systems of units of $k_2$ and $k_3$.
			It follows that,  	$$E_{k_1}E_{k_2}E_{k_3}=\langle-1,  \varepsilon_{2}, \varepsilon_{p},   {\varepsilon_{pq}} , \sqrt{\varepsilon_{q}}, \sqrt{\varepsilon_{2q}},  \sqrt{ \varepsilon_{2pq}}, \sqrt{\varepsilon_{2p}}\rangle.$$	
			Let  $\xi$ be an element of $\KK$ which is the  square root of an element of $E_{k_1}E_{k_2}E_{k_3}$. Therefore, we can assume that
			$$\xi^2=\varepsilon_{2}^a\varepsilon_{p}^b  \varepsilon_{pq}^c  \sqrt{\varepsilon_{q}}^d\sqrt{\varepsilon_{2q}}^e\sqrt{\varepsilon_{2pq}}^f
			\sqrt{\varepsilon_{2p}}^g,$$
			where $a, b, c, d, e, f$ and $g$ are in $\{0, 1\}$.

			\noindent\ding{224}	Let us start	by applying   the norm map $N_{\KK/k_2}=1+\tau_2$. By     Tables \ref{norm q=3 p=1mod4q}  and \ref{tab3}, we have	
			
			\begin{eqnarray*}
				N_{\KK/k_2}(\xi^2)&=&
				\varepsilon_{2}^{2a}\cdot (-1)^b \cdot1\cdot \varepsilon_{q}^d\cdot \varepsilon_{2q}^e\cdot(-1)^f  \cdot (-1)^{gu}\\\
				&=&	\varepsilon_{2}^{2a}  \varepsilon_{q}^c\varepsilon_{2q}^d\cdot(-1)^{b +f+gu} .
			\end{eqnarray*}
			Thus,  	  $b +f+gu\equiv 0\pmod2$.    
			
			\noindent\ding{224}	Let   us apply    the norm map $N_{\KK/k_5}=1+\tau_1\tau_2$, with $k_5=\QQ(\sqrt{q}, \sqrt{2p})$.	By    Tables \ref{norm q=3 p=1mod4q}  and \ref{tab3}, we have	
			\begin{eqnarray*}
				N_{\KK/k_5}(\xi^2)&=&(-1)^a\cdot(-1)^b\cdot   1\cdot(-1)^d\cdot\varepsilon_{2  q}^d\cdot  1 \cdot  (-1)^f\cdot \varepsilon_{  2pq}^f \cdot(-1)^g\cdot\varepsilon_{2  p}^g\\
				&=&  (-1)^{a+b+d+f+g}\varepsilon_{2  q}^d\varepsilon_{  2pq}^f\varepsilon_{2  p}^g.
			\end{eqnarray*}
			Thus,  	  $a+b+d+f+g\equiv 0\pmod2$ and $d+f+g\equiv 0\pmod2$.  Therefore $a=b$ and   
			$$\xi^2=\varepsilon_{2}^a\varepsilon_{p}^a  \varepsilon_{pq}^c  \sqrt{\varepsilon_{q}}^d\sqrt{\varepsilon_{2q}}^e\sqrt{\varepsilon_{2pq}}^f
			\sqrt{\varepsilon_{2p}}^g.$$
			
			\noindent\ding{224} Let us apply the norm $N_{\KK/k_6}=1+\tau_1\tau_3$, with $k_6=\QQ(\sqrt{p}, \sqrt{2q})$. By Tables \ref{norm q=3 p=1mod4q}  and \ref{tab3}, we have 
			\begin{eqnarray*}
				N_{\KK/k_6}(\xi^2)&=&(-1)^a\cdot \varepsilon_{  p}^{2a}\cdot 1\cdot 1\cdot (-1)^e\cdot \varepsilon_{2  q}^e\cdot  (-1)^f\cdot \varepsilon_{  2pq}^f\cdot  (-1)^{gu+g}\\
				&=&	\varepsilon_{  p}^{2a} (-1)^{a+e+f+g+ug}\varepsilon_{2  q}^e\varepsilon_{  2pq}^f.
			\end{eqnarray*}
			Thus,  	  $a+e+f+g+ug\equiv 0\pmod2$ and $e=f$.  Therefore,  $a+g+ug\equiv 0\pmod2$ and 
			$$\xi^2=\varepsilon_{2}^a\varepsilon_{p}^a  \varepsilon_{pq}^c  \sqrt{\varepsilon_{q}}^d\sqrt{\varepsilon_{2q}}^e\sqrt{\varepsilon_{2pq}}^e
			\sqrt{\varepsilon_{2p}}^g.$$

			\noindent\ding{224} Let us apply       the norm map $N_{\KK/k_3}=1+\tau_2\tau_3$, with $k_3=\QQ(\sqrt{2}, \sqrt{pq})$. By Tables \ref{norm q=3 p=1mod4q}  and \ref{tab3}, we get:
			%	\begin{eqnarray}\label{T_3_1_tau2tau3_N=-1 C2}
			%		\begin{tabular}{ |c|c|c|c|c|c|c|c|c|}
			%			\hline
			%			$\varepsilon$&$\varepsilon_{2}$ &$ {\varepsilon_{p}}$&$\varepsilon_{pq}$ &$\sqrt{\varepsilon_{q}}$&$\sqrt{\varepsilon_{2q}}$ &$\sqrt{\varepsilon_{2pq}}$ \\
			%			\hline
			%			$\varepsilon^{1+\tau_2\tau_3}$&$\varepsilon_{2}^2$ &$-1$ &$\varepsilon_{pq}^2$ &	$-1$&$-1$&  ${\varepsilon_{2pq}}$   \\
			%			\hline
			%		\end{tabular} 
			%	\end{eqnarray}
			\begin{eqnarray*}
				N_{\KK/k_3}(\xi^2)&=&
				\varepsilon_{2}^{2a}\cdot (-1)^a \cdot\varepsilon_{pq}^{2c}\cdot (-1)^d\cdot (-1)^e\cdot\varepsilon_{2pq}^e  \cdot (-1)^{gu}\\\
				&=&	\varepsilon_{2}^{2a} \varepsilon_{2pq}^e  \cdot(-1)^{a +d+e+gu} .
			\end{eqnarray*}
			Thus,  	  $a +d+e+gu\equiv 0\pmod2$.

			\noindent\ding{224} Let us apply the norm $N_{\KK/k_4}=1+\tau_1$, with $k_4=\QQ(\sqrt{p}, \sqrt{q})$. % We have 
			%	$\sqrt{\varepsilon_{pq}}^{1+\tau_1}=-\varepsilon_{pq}$ and 
			%	$\sqrt{\varepsilon_{2pq}}^{1+\tau_1}=1$. 
			So, by Tables \ref{norm q=3 p=1mod4q}  and \ref{tab3}, we get:
			%	\begin{eqnarray}\label{T_3_1_tau1_N=1 C2}{\tiny }
			%		\begin{tabular}{ |c|c|c|c|c|c|c|c|c|}
			%			\hline
			%			$\varepsilon$&$\varepsilon_{2}$ &${\varepsilon_{p}}$& $\varepsilon_{pq}$& $\sqrt{\varepsilon_{q}}$&$\sqrt{\varepsilon_{2q}}$& $\sqrt{\varepsilon_{2pq}}$ \\
			%			\hline
			%			$\varepsilon^{1+\tau_1}$&$-1$ &$\varepsilon_{p}^2$&$\varepsilon_{pq}^{2}$	&$-\varepsilon_{q}$&$1$&  $1$   \\
			%		\hline
			%	\end{tabular}
			%	\end{eqnarray}
			%	Then 
			\begin{eqnarray*}
				N_{\KK/k_4}(\xi^2)&=&(-1)^a\cdot
				\varepsilon_{p}^{2a}\cdot \varepsilon_{pq}^{2a} \cdot (-1)^d\cdot\varepsilon_{q}^d\cdot 1 \cdot1  \cdot (-1)^{gu+g}\\\
				&=&	\varepsilon_{2}^{2a}\varepsilon_{pq}^{2a}\cdot(-1)^{a +d+g+gu}\varepsilon_{q}^d .
			\end{eqnarray*}
			Thus $d=0$ and so   $a +e+gu\equiv 0\pmod2$. Since $a+g+ug\equiv 0\pmod2$, we have $e=g$. Since $\varepsilon_{pq}$ is a square in $\KK$, we can  disregard it in the form of $\xi^2$.
			Therefore, 
			$$\xi^2=\varepsilon_{2}^a\varepsilon_{p}^a     \sqrt{\varepsilon_{2q}}^e\sqrt{\varepsilon_{2pq}}^e
			\sqrt{\varepsilon_{2p}}^e,$$
			with $a+e+eu\equiv 0\pmod2$.  So the result (cf. \eqref{classnumberofKK}).
		\end{enumerate}
	\end{proof}

	\begin{theorem}\label{T_3_1-3} Let $p\equiv 1\pmod{8}$ and $q\equiv3\pmod 8$ be two primes such that     $\genfrac(){}{0}{p}{q} =1$. 
		Put     $\KK=\QQ(\sqrt 2, \sqrt{p}, \sqrt{q} )$.  Assume furthermore that $x-1$ and $2p(v+1)$ are squares in $\mathbb{N}$, where $x$ and $v$ are defined in Lemma \ref{lm expressions of units under cond 3}.
		\begin{enumerate}[\rm 1.]
			\item Assume that $N(\varepsilon_{2p})=-1$.  We have
			\begin{enumerate}[\rm $\bullet$]
				
				\item The unit group of $\KK$ is :
				$$E_{\KK}=\langle -1,   \varepsilon_{2}, \varepsilon_{p} ,    \sqrt{\varepsilon_{q}}, \sqrt{\varepsilon_{2q}},\sqrt{ \varepsilon_{pq}} ,\sqrt{ \varepsilon_{2pq}}, \sqrt{\varepsilon_{2}\varepsilon_{p}\varepsilon_{2p}} \rangle.$$
				\item  The $2$-class number  of  $\KK$ equals $ \frac{1}{2^{4}} h_2(2p)h_2(pq)h_2(2pq)$.  
			\end{enumerate}
			\item Assume that $N(\varepsilon_{2p})=1$ and let $a\in\{0,1\}$ be such that $a \equiv 1+u\pmod2$.   We have
			
			\begin{enumerate}[\rm $\bullet$]
				\item  The unit group of $\KK$ is :
				$$E_{\KK}=\langle -1,   \varepsilon_{2}, \varepsilon_{p},   \sqrt{\varepsilon_{q}}, \sqrt{\varepsilon_{2q}},  \sqrt{\varepsilon_{pq}} ,\sqrt{ \varepsilon_{2pq}},    \sqrt{\varepsilon_{2}^{a\alpha}\varepsilon_{p}^{a\alpha}     \sqrt{\varepsilon_{2q}}^\alpha\sqrt{\varepsilon_{2pq}}^\alpha
					\sqrt{\varepsilon_{2p}}^{1+\gamma} }  \rangle$$
				where $\alpha$, $\gamma\in \{0,1\}$ are such that $\alpha\not=\gamma$ and $\alpha =1$ if and only if $\varepsilon_{2}^a \varepsilon_{p}^a      \sqrt{\varepsilon_{2q}} \sqrt{\varepsilon_{2pq}}\sqrt{\varepsilon_{2p}}$ is a square in $\KK$.
				\item  The $2$-class number  of  $\KK$ equals $ \frac{1}{2^{4-\alpha}} h_2(2p)h_2(pq)h_2(2pq)$.
			\end{enumerate}
		\end{enumerate}
	\end{theorem}
	\begin{proof}
		\begin{enumerate}[\rm 1.]
			\item Assume that   $N(\varepsilon_{2p})=-1$.
			Note that $\{\varepsilon_{2}, \varepsilon_{p},	\sqrt{\varepsilon_{2}\varepsilon_{p}\varepsilon_{2p}}\}$, is  a fundamental system  of units of $k_1$.
			Using 	Lemma \ref{lm expressions of units under cond 3}, we show that $\{\varepsilon_{2}, \sqrt{\varepsilon_{q}}, \sqrt{\varepsilon_{2q}}\}$ and $\{ \varepsilon_{2}, 	 \varepsilon_{pq},\sqrt{\varepsilon_{2pq}}\}$ are respectively fundamental systems of units of $k_2$ and $k_3$.
			So we have:
			$$E_{k_1}E_{k_2}E_{k_3}=\langle-1,  \varepsilon_{2}, \varepsilon_{p} ,   \varepsilon_{pq} ,   \sqrt{\varepsilon_{q}}, \sqrt{\varepsilon_{2q}} ,\sqrt{ \varepsilon_{2pq}}, \sqrt{\varepsilon_{2}\varepsilon_{p}\varepsilon_{2p}}\rangle.$$	
			Let  $\xi$ be an element of $\KK$ which is the  square root of an element of $E_{k_1}E_{k_2}E_{k_3}$. Therefore, we can assume that
			$$\xi^2=\varepsilon_{2}^a\varepsilon_{p}^b  \varepsilon_{pq}^c  \sqrt{\varepsilon_{q}}^d\sqrt{\varepsilon_{2q}}^e\sqrt{\varepsilon_{2pq}}^f
			\sqrt{\varepsilon_{2}\varepsilon_{p}\varepsilon_{2p}}^g,$$
			where $a, b, c, d, e, f$ and $g$ are in $\{0, 1\}$.

			\noindent\ding{224}  Let us start	by applying   the norm map $N_{\KK/k_2}=1+\tau_2$.  We have 
			
			%	$\sqrt{\varepsilon_{pq}}^{1+\tau_2}=-1$,
			%	$\sqrt{\varepsilon_{2pq}}^{1+\tau_2}=-1$ and
			$\sqrt{\varepsilon_{2}\varepsilon_{p}\varepsilon_{2p}}^{1+ \tau_2}=(-1)^{v'}\varepsilon_{2}$, for some $v'\in\{0,1\}$. Then, by Tables \ref{norm q=3 p=1mod4q}  and \ref{tab3}, we get:
			%	\begin{eqnarray}\label{T_3_1_tau2_N=1 C2}
			%	\begin{tabular}{ |c|c|c|c|c|c|c|c|c|}
			%		\hline
			%		$\varepsilon$&$\varepsilon_{2}$ &$ {\varepsilon_{p}}$&   $ \varepsilon_{pq}$& $\sqrt{\varepsilon_{q}}$&$\sqrt{\varepsilon_{2q}}$&$\sqrt{\varepsilon_{2pq}}$&$\sqrt{\varepsilon_{2}\varepsilon_{p}\varepsilon_{2p}}$ \\
			%	\hline
			%	$\varepsilon^{1+\tau_2}$&$\varepsilon_{2}^2$ &$-1$&	$1$& $\varepsilon_{q}$  &$\varepsilon_{2q}$&  $-1$&$(-1)^v\varepsilon_{2}$ \\
			%		\hline
			%	\end{tabular} 
			%	\end{eqnarray}
			%	Thus 
			\begin{eqnarray*}
				N_{\KK/k_2}(\xi^2)&=&
				\varepsilon_{2}^{2a}\cdot (-1)^b \cdot1\cdot \varepsilon_{q}^d\cdot \varepsilon_{2q}^e\cdot(-1)^f  \cdot (-1)^{gv'}\varepsilon_{2}^g\\\
				&=&	\varepsilon_{2}^{2a}  \varepsilon_{q}^c\varepsilon_{2q}^d\cdot(-1)^{b +f+gv'}\varepsilon_{2}^g .
			\end{eqnarray*} 
			Thus,  $b +f+gv'\equiv 0 \pmod 2$ and $g=0$. Then $b=f$ and 
			$$\xi^2=\varepsilon_{2}^a\varepsilon_{p}^f  \varepsilon_{pq}^c  \sqrt{\varepsilon_{q}}^d\sqrt{\varepsilon_{2q}}^e\sqrt{\varepsilon_{2pq}}^f.$$
			
			\noindent\ding{224} Let us apply the norm $N_{\KK/k_5}=1+\tau_1\tau_2$, with $k_5=\QQ(\sqrt{q}, \sqrt{2p})$.%We have 
			%	$\sqrt{\varepsilon_{pq}}^{1+\tau_1\tau_2}=-1$ and 
			%	$\sqrt{\varepsilon_{2pq}}^{1+\tau_1\tau_2}=-\varepsilon_{2pq}$. 
			\; By Tables \ref{norm q=3 p=1mod4q}  and \ref{tab3}, we have:
			\begin{eqnarray*}
				N_{\KK/k_5}(\xi^2)&=&(-1)^a\cdot (-1)^f\cdot 1\cdot (-1)^d\cdot \varepsilon_{  q}^d\cdot1 \cdot (-1)^f\cdot \varepsilon_{  2pq}^f\\
				&=&	 (-1)^{a +d }\varepsilon_{  q}^d\cdot \varepsilon_{  2pq}^f.
			\end{eqnarray*}
			Thus $a=d=f$ and 
			$$\xi^2=\varepsilon_{2}^f\varepsilon_{p}^f  \varepsilon_{pq}^c  \sqrt{\varepsilon_{q}}^f\sqrt{\varepsilon_{2q}}^e\sqrt{\varepsilon_{2pq}}^f.$$

			\noindent\ding{224} Let us apply the norm $N_{\KK/k_6}=1+\tau_1\tau_3$, with $k_6=\QQ(\sqrt{p}, \sqrt{2q})$. %We have 
			%	$\sqrt{\varepsilon_{pq}}^{1+\tau_1\tau_3}=1$ and 
			%$\sqrt{\varepsilon_{2pq}}^{1+\tau_1\tau_3}=-\varepsilon_{2pq}$. 
			\; Then, by Tables \ref{norm q=3 p=1mod4q}  and \ref{tab3}, we get: 
			\begin{eqnarray*}
				N_{\KK/k_6}(\xi^2)&=&(-1)^f\cdot \varepsilon_{  p}^{2f}\cdot 1\cdot 1\cdot (-1)^e\cdot \varepsilon_{2  q}^e\cdot  (-1)^f\cdot \varepsilon_{  2pq}^f\\
				&=&	\varepsilon_{  p}^{2a} (-1)^{e}\varepsilon_{2  q}^e\varepsilon_{  2pq}^f.
			\end{eqnarray*}
			Thus $e=f=0$. Hence 
			$$\xi^2=   \varepsilon_{pq}^c .$$
			
			By   Lemma \ref{lm expressions of units under cond 3} $\varepsilon_{pq}$ is a square in $\KK$, therefore
			$$E_{\KK} =\langle-1,  \varepsilon_{2}, \varepsilon_{p} ,    \sqrt{\varepsilon_{q}}, \sqrt{\varepsilon_{2q}},\sqrt{ \varepsilon_{pq}} ,\sqrt{ \varepsilon_{2pq}}, \sqrt{\varepsilon_{2}\varepsilon_{p}\varepsilon_{2p}}\rangle.$$	
			The rest of the first item is direct from Lemma \ref{wada's f.}.

			\item Assume that 	$N(\varepsilon_{2p})=1$. 	Note that $\{\varepsilon_{2}, \varepsilon_{p},	\sqrt{\varepsilon_{2p}}\}$, is  a fundamental system  of units of $k_1$.
			Using 	Lemma \ref{lm expressions of units under cond 3}, we show that $\{\varepsilon_{2}, \sqrt{\varepsilon_{q}}, \sqrt{\varepsilon_{2q}}\}$ and $\{ \varepsilon_{2}, 	 \varepsilon_{pq},\sqrt{\varepsilon_{2pq}}\}$ are respectively fundamental systems of units of $k_2$ and $k_3$.
			It follows that,  	$$E_{k_1}E_{k_2}E_{k_3}=\langle-1,  \varepsilon_{2}, \varepsilon_{p},   {\varepsilon_{pq}} , \sqrt{\varepsilon_{q}}, \sqrt{\varepsilon_{2q}},  \sqrt{ \varepsilon_{2pq}}, \sqrt{\varepsilon_{2p}}\rangle.$$	
			Let  $\xi$ be an element of $\KK$ which is the  square root of an element of $E_{k_1}E_{k_2}E_{k_3}$. Therefore, we can assume that 
			$$\xi^2=\varepsilon_{2}^a\varepsilon_{p}^b  \varepsilon_{pq}^c  \sqrt{\varepsilon_{q}}^d\sqrt{\varepsilon_{2q}}^e\sqrt{\varepsilon_{2pq}}^f
			\sqrt{\varepsilon_{2p}}^g,$$
			where $a, b, c, d, e, f$ and $g$ are in $\{0, 1\}$.

			\noindent\ding{224}	Let us start	by applying   the norm map $N_{\KK/k_2}=1+\tau_2$. We have	
			\begin{eqnarray*}
				N_{\KK/k_2}(\xi^2)&=&
				\varepsilon_{2}^{2a}\cdot (-1)^b \cdot1\cdot \varepsilon_{q}^d\cdot \varepsilon_{2q}^e\cdot(-1)^f  \cdot (-1)^{gu}\\\
				&=&	\varepsilon_{2}^{2a}  \varepsilon_{q}^c\varepsilon_{2q}^d\cdot(-1)^{b +f+gu} .
			\end{eqnarray*}
			Thus,  	  $b +f+gu\equiv 0\pmod2$.    
			
			\noindent\ding{224}	Let   us apply    the norm map $N_{\KK/k_5}=1+\tau_1\tau_2$, with $k_5=\QQ(\sqrt{q}, \sqrt{2p})$.	We have	
			\begin{eqnarray*}
				N_{\KK/k_5}(\xi^2)&=&(-1)^a\cdot(-1)^b\cdot   1\cdot(-1)^d\cdot\varepsilon_{2  q}^d\cdot  1 \cdot  (-1)^f\cdot \varepsilon_{  2pq}^f \cdot(-1)^g\cdot\varepsilon_{2  p}^g\\
				&=&  (-1)^{a+b+d+f+g}\varepsilon_{2  q}^d\varepsilon_{  2pq}^f\varepsilon_{2  p}^g.
			\end{eqnarray*}
			Thus,  	  $a+b+d+f+g\equiv 0\pmod2$ and $d+f+g\equiv 0\pmod2$.  Therefore $a=b$ and   
			
			$$\xi^2=\varepsilon_{2}^a\varepsilon_{p}^a  \varepsilon_{pq}^c  \sqrt{\varepsilon_{q}}^d\sqrt{\varepsilon_{2q}}^e\sqrt{\varepsilon_{2pq}}^f
			\sqrt{\varepsilon_{2p}}^g.$$
			
			\noindent\ding{224} Let us apply the norm $N_{\KK/k_6}=1+\tau_1\tau_3$, with $k_6=\QQ(\sqrt{p}, \sqrt{2q})$. We have 
			\begin{eqnarray*}
				N_{\KK/k_6}(\xi^2)&=&(-1)^a\cdot \varepsilon_{  p}^{2a}\cdot 1\cdot 1\cdot (-1)^e\cdot \varepsilon_{2  q}^e\cdot  (-1)^f\cdot \varepsilon_{  2pq}^f\cdot  (-1)^{gu+g}\\
				&=&	\varepsilon_{  p}^{2a} (-1)^{a+e+f+g+ug}\varepsilon_{2  q}^e\varepsilon_{  2pq}^f.
			\end{eqnarray*}
			Thus,  	  $a+e+f+g+ug\equiv 0\pmod2$ and $e=f$.  Therefore,  $a+g+ug\equiv 0\pmod2$ and 
			$$\xi^2=\varepsilon_{2}^a\varepsilon_{p}^a  \varepsilon_{pq}^c  \sqrt{\varepsilon_{q}}^d\sqrt{\varepsilon_{2q}}^e\sqrt{\varepsilon_{2pq}}^e
			\sqrt{\varepsilon_{2p}}^g.$$

			\noindent\ding{224} Let us apply       the norm map $N_{\KK/k_3}=1+\tau_2\tau_3$, with $k_3=\QQ(\sqrt{2}, \sqrt{pq})$. %We have 
			%	$\sqrt{\varepsilon_{pq}}^{1+\tau_2\tau_3}=-\varepsilon_{pq}$ and 
			%	$\sqrt{\varepsilon_{2pq}}^{1+\tau_2\tau_3}=\varepsilon_{2pq}$.
			So, by Tables \ref{norm q=3 p=1mod4q}  and \ref{tab3}, we get:
			\begin{eqnarray*}
				N_{\KK/k_3}(\xi^2)&=&
				\varepsilon_{2}^{2a}\cdot (-1)^a \cdot\varepsilon_{pq}^{2c}\cdot (-1)^d\cdot (-1)^e\cdot\varepsilon_{2pq}^e  \cdot (-1)^{gu}\\\
				&=&	\varepsilon_{2}^{2a} \varepsilon_{2pq}^e  \cdot(-1)^{a +d+e+gu} .
			\end{eqnarray*}
			Thus,  	  $a +d+e+gu\equiv 0\pmod2$.

			\noindent\ding{224} Let us apply the norm $N_{\KK/k_4}=1+\tau_1$, with $k_4=\QQ(\sqrt{p}, \sqrt{q})$. % We have 
			%	$\sqrt{\varepsilon_{pq}}^{1+\tau_1}= \varepsilon_{pq}$ and 
			%$\sqrt{\varepsilon_{2pq}}^{1+\tau_1}=1$. 
			So, by Tables \ref{norm q=3 p=1mod4q}  and \ref{tab3}, we get:
			\begin{eqnarray*}
				N_{\KK/k_4}(\xi^2)&=&(-1)^a\cdot
				\varepsilon_{p}^{2a}\cdot \varepsilon_{pq}^{2a} \cdot (-1)^d\cdot\varepsilon_{q}^d\cdot 1 \cdot1  \cdot (-1)^{gu+g}\\\
				&=&	\varepsilon_{2}^{2a}\varepsilon_{pq}^{2a}\cdot(-1)^{a +d+g+gu}\varepsilon_{q}^d .
			\end{eqnarray*}
			Thus $d=0$ and so   $a +e+gu\equiv 0\pmod2$. As $a+g+ug\equiv 0\pmod2$, we have $e=g$. Since $\varepsilon_{pq}$ is a square in $\KK$ (Lemma \ref{lm expressions of units under cond 3}), we can  disregard it in the form of $\xi^2$.
			Therefore, 
			$$\xi^2=\varepsilon_{2}^a\varepsilon_{p}^a     \sqrt{\varepsilon_{2q}}^e\sqrt{\varepsilon_{2pq}}^e
			\sqrt{\varepsilon_{2p}}^e,$$
			with $a+e+eu\equiv 0\pmod2$.  So the result (cf. \eqref{classnumberofKK}).
		\end{enumerate}
	\end{proof}

	\begin{theorem}\label{T_3_1_C4} Let $p\equiv 1\pmod{8}$ and $q\equiv3\pmod 8$ be two primes such that     $\genfrac(){}{0}{p}{q} =1$. 
		Put     $\KK=\QQ(\sqrt 2, \sqrt{p}, \sqrt{q} )$.  Assume furthermore that $p(x-1)$ and $(v-1)$ are squares in $\mathbb{N}$, where $x$ and $v$ are defined in Lemma \ref{lm expressions of units under cond 3}.
		Then
		\begin{enumerate}[\rm 1.]
			\item Assume that $N(\varepsilon_{2p})=-1$.  We have
			\begin{enumerate}[\rm $\bullet$]
				
				\item The unit group of $\KK$ is :
				$$E_{\KK}=\langle -1,   \varepsilon_{2}, \varepsilon_{p} ,    \sqrt{\varepsilon_{q}}, \sqrt{\varepsilon_{2q}},\sqrt{ \varepsilon_{pq}} ,\sqrt{ \varepsilon_{2pq}}, \sqrt{\varepsilon_{2}\varepsilon_{p}\varepsilon_{2p}} \rangle.$$
				\item  The $2$-class number  of  $\KK$ equals $ \frac{1}{2^{4}} h_2(2p)h_2(pq)h_2(2pq)$.  
			\end{enumerate}
			\item Assume that $N(\varepsilon_{2p})=1$ and let be $a\in\{0,1\}$ such that $a \equiv 1+u\pmod2$.   We have
			
			\begin{enumerate}[\rm $\bullet$]
				\item  The unit group of $\KK$ is :
				$$E_{\KK}=\langle -1,   \varepsilon_{2}, \varepsilon_{p},   \sqrt{\varepsilon_{q}}, \sqrt{\varepsilon_{2q}},  \sqrt{\varepsilon_{pq}} ,\sqrt{ \varepsilon_{2pq}},    \sqrt{\varepsilon_{2}^{a\alpha}\varepsilon_{p}^{a\alpha}     \sqrt{\varepsilon_{q}}^\alpha\sqrt{\varepsilon_{pq}}^\alpha
					\sqrt{\varepsilon_{2p}}^{1+\gamma} }  \rangle$$
				where $\alpha$, $\gamma\in \{0,1\}$ are such that $\alpha\not=\gamma$ and $\alpha =1$ if and only if $\varepsilon_{2}^a \varepsilon_{p}^a      \sqrt{\varepsilon_{q}} \sqrt{\varepsilon_{pq}}\sqrt{\varepsilon_{2p}}$ is a square in $\KK$.
				\item  The $2$-class number  of  $\KK$ equals $ \frac{1}{2^{4-\alpha}} h_2(2p)h_2(pq)h_2(2pq)$.
			\end{enumerate}
		\end{enumerate}
	\end{theorem}
	\begin{proof} 
		\begin{enumerate}[\rm 1.]
			\item Assume that   $N(\varepsilon_{2p})=-1$.
			Note that $\{\varepsilon_{2}, \varepsilon_{p},	\sqrt{\varepsilon_{2}\varepsilon_{p}\varepsilon_{2p}}\}$, is  a fundamental system  of units of $k_1$.
			Using 	Lemma \ref{lm expressions of units under cond 3}, we show that $\{\varepsilon_{2}, \sqrt{\varepsilon_{q}}, \sqrt{\varepsilon_{2q}}\}$ and $\{ \varepsilon_{2}, 	 \varepsilon_{2pq},\sqrt{\varepsilon_{pq}}\}$ are respectively fundamental systems of units of $k_2$ and $k_3$.
			It follows that,  	$$E_{k_1}E_{k_2}E_{k_3}=\langle-1,  \varepsilon_{2}, \varepsilon_{p} ,   \varepsilon_{2pq} ,   \sqrt{\varepsilon_{q}}, \sqrt{\varepsilon_{2q}} ,\sqrt{ \varepsilon_{pq}}, \sqrt{\varepsilon_{2}\varepsilon_{p}\varepsilon_{2p}}\rangle.$$	
			
			Let  $\xi$ be an element of $\KK$ which is the  square root of an element of $E_{k_1}E_{k_2}E_{k_3}$. Therefore, we can assume that
			$$\xi^2=\varepsilon_{2}^a\varepsilon_{p}^b  \varepsilon_{2pq}^c  \sqrt{\varepsilon_{q}}^d\sqrt{\varepsilon_{2q}}^e\sqrt{\varepsilon_{pq}}^f
			\sqrt{\varepsilon_{2}\varepsilon_{p}\varepsilon_{2p}}^g,$$
			where $a, b, c, d, e, f$ and $g$ are in $\{0, 1\}$.
			
			\noindent\ding{224}  Let us start	by applying   the norm map $N_{\KK/k_2}=1+\tau_2$. % We have,  
			%$\sqrt{\varepsilon_{pq}}^{1+ \tau_2}=-1$ and 
			%$\sqrt{\varepsilon_{2pq}}^{1+ \tau_2}=1$. Then, 
			By Tables \ref{norm q=3 p=1mod4q}  and \ref{tab3}, we have:
			\begin{eqnarray*}
				N_{\KK/k_2}(\xi^2)&=&
				\varepsilon_{2}^{2a}\cdot (-1)^b \cdot1\cdot \varepsilon_{q}^d\cdot \varepsilon_{2q}^e\cdot(-1)^f  \cdot (-1)^{gu}\varepsilon_{2}^g\\\
				&=&	\varepsilon_{2}^{2a}  \varepsilon_{q}^c\varepsilon_{2q}^d\cdot(-1)^{b +f+gu}\varepsilon_{2}^g .
			\end{eqnarray*}
			Thus,  $b +f+gu\equiv 0 \pmod 2$ and $g=0$. Then $b=f$ and 
			$$\xi^2=\varepsilon_{2}^a\varepsilon_{p}^f  \varepsilon_{2pq}^c  \sqrt{\varepsilon_{q}}^d\sqrt{\varepsilon_{2q}}^e\sqrt{\varepsilon_{pq}}^f.$$
			
			\noindent\ding{224} Let us apply the norm $N_{\KK/k_5}=1+\tau_1\tau_2$, with $k_5=\QQ(\sqrt{q}, \sqrt{2p})$.  % We have 
			%	$\sqrt{\varepsilon_{pq}}^{1+ \tau_1\tau_2}=1$ and 
			%	$\sqrt{\varepsilon_{2pq}}^{1+ \tau_1\tau_2}=\varepsilon_{2pq}$. 
			Then, by Tables \ref{norm q=3 p=1mod4q}  and \ref{tab3}, we have: 
			\begin{eqnarray*}
				N_{\KK/k_5}(\xi^2)&=&(-1)^a\cdot (-1)^f\cdot \varepsilon_{2pq}^{2c}\cdot (-1)^d\cdot \varepsilon_{  q}^d\cdot1 \cdot 1\\
				&=&	 \varepsilon_{2pq}^{2c}(-1)^{a +d }\varepsilon_{  q}^d.
			\end{eqnarray*}
			
			Thus $a=d=0$ and 
			$$\xi^2=\varepsilon_{p}^f  \varepsilon_{2pq}^c  \sqrt{\varepsilon_{2q}}^e\sqrt{\varepsilon_{pq}}^f.$$

			\noindent\ding{224} Let us apply the norm $N_{\KK/k_6}=1+\tau_1\tau_3$, with $k_6=\QQ(\sqrt{p}, \sqrt{2q})$. % We have 
			%	$\sqrt{\varepsilon_{pq}}^{1+ \tau_1\tau_2}=1$ and 
			%	$\sqrt{\varepsilon_{2pq}}^{1+ \tau_1\tau_2}=-\varepsilon_{2pq}$.
			By Tables \ref{norm q=3 p=1mod4q}  and \ref{tab3}, we have: 
			\begin{eqnarray*}
				N_{\KK/k_6}(\xi^2)&=& \varepsilon_{  p}^{2f}\cdot \varepsilon_{2pq}^{2c}\cdot (-1)^e\cdot \varepsilon_{2  q}^e\cdot  1\\
				&=&	\varepsilon_{  p}^{2a}\varepsilon_{2pq}^{2c} (-1)^{e}\varepsilon_{2  q}^e .
			\end{eqnarray*}
			Therefore $e=0$ and 
			$$\xi^2=\varepsilon_{p}^f  \varepsilon_{2pq}^c  \sqrt{\varepsilon_{pq}}^f.$$
			
			\noindent\ding{224} Let us apply       the norm map $N_{\KK/k_3}=1+\tau_2\tau_3$, with $k_3=\QQ(\sqrt{2}, \sqrt{pq})$.   %We have 
			%	$\sqrt{\varepsilon_{pq}}^{1+ \tau_1\tau_2}=\varepsilon_{pq}$ and 
			%	$\sqrt{\varepsilon_{2pq}}^{1+ \tau_1\tau_2}=-\varepsilon_{2pq}$. 
			By Tables \ref{norm q=3 p=1mod4q}  and \ref{tab3}, we get:
			\begin{eqnarray*}
				N_{\KK/k_3}(\xi^2)&=& (-1)^f\cdot \varepsilon_{2pq}^{2c}\cdot \varepsilon_{pq}^{2f}=\varepsilon_{2pq}^{2c} \varepsilon_{pq}^{2f}(-1)^f.
			\end{eqnarray*}
			Thus, $f=0$ and 
			$$\xi^2=  \varepsilon_{2pq}^c .$$
			Since   $\varepsilon_{2pq}$ is a square in $\KK$, then we have the first item.

			\item	 Assume that   $N(\varepsilon_{2p})=1$. In this case we have: $\{\varepsilon_{2}, \varepsilon_{p},	\sqrt{ \varepsilon_{2p}}\}$, is  a fundamental system  of units of $k_1$.
			Using 	Lemma \ref{lm expressions of units under cond 3}, we show that $\{\varepsilon_{2}, \sqrt{\varepsilon_{q}}, \sqrt{\varepsilon_{2q}}\}$ and $\{ \varepsilon_{2}, 	 \varepsilon_{2pq},\sqrt{\varepsilon_{pq}}\}$ are respectively fundamental systems of units of $k_2$ and $k_3$.
			Thus, 	$$E_{k_1}E_{k_2}E_{k_3}=\langle-1,  \varepsilon_{2}, \varepsilon_{p},   {\varepsilon_{2pq}} , \sqrt{\varepsilon_{q}}, \sqrt{\varepsilon_{2q}},  \sqrt{ \varepsilon_{pq}}, \sqrt{\varepsilon_{2p}}\rangle.$$	
			Let  $\xi$ be an element of $\KK$ which is the  square root of an element of $E_{k_1}E_{k_2}E_{k_3}$. Therefore, we can assume that 
			$$\xi^2=\varepsilon_{2}^a\varepsilon_{p}^b  \varepsilon_{2pq}^c  \sqrt{\varepsilon_{q}}^d\sqrt{\varepsilon_{2q}}^e\sqrt{\varepsilon_{pq}}^f
			\sqrt{\varepsilon_{2p}}^g,$$
			where $a, b, c, d, e, f$ and $g$ are in $\{0, 1\}$.
			
			\noindent\ding{224}  Let us start	by applying   the norm map $N_{\KK/k_2}=1+\tau_2$. We have
			\begin{eqnarray*}
				N_{\KK/k_2}(\xi^2)&=&
				\varepsilon_{2}^{2a}\cdot (-1)^b \cdot1\cdot \varepsilon_{q}^d\cdot \varepsilon_{2q}^e\cdot(-1)^f  \cdot (-1)^{gu}\\
				&=&	\varepsilon_{2}^{2a}  \varepsilon_{q}^c\varepsilon_{2q}^d\cdot(-1)^{b +f+gu}  .
			\end{eqnarray*}
			Thus, $b +f+gu\equiv 0\pmod2$.
			
			\noindent\ding{224} Let us apply the norm $N_{\KK/k_5}=1+\tau_1\tau_2$, with $k_5=\QQ(\sqrt{q}, \sqrt{2p})$. We have 
			\begin{eqnarray*}
				N_{\KK/k_5}(\xi^2)&=&(-1)^a\cdot (-1)^b\cdot \varepsilon_{2pq}^{2c}\cdot (-1)^d\cdot \varepsilon_{  q}^d\cdot1 \cdot 1\cdot (-1)^{g}\cdot\varepsilon_{2p}^g \\
				&=&	 \varepsilon_{2pq}^{2c}(-1)^{a +b+d+g }\varepsilon_{  q}^d\varepsilon_{2p}^g.
			\end{eqnarray*}
			Thus, $a +b+d+g\equiv 0\pmod2$ and $d=g$. So $a=b$. Therefore, 
			$$\xi^2=\varepsilon_{2}^a\varepsilon_{p}^a  \varepsilon_{2pq}^c  \sqrt{\varepsilon_{q}}^g\sqrt{\varepsilon_{2q}}^e\sqrt{\varepsilon_{pq}}^f
			\sqrt{\varepsilon_{2p}}^g.$$
			
			\noindent\ding{224} Let us apply the norm $N_{\KK/k_6}=1+\tau_1\tau_3$, with $k_6=\QQ(\sqrt{p}, \sqrt{2q})$. We have 
			\begin{eqnarray*}
				N_{\KK/k_6}(\xi^2)&=& (-1)^{a}\cdot \varepsilon_{  p}^{2a}\cdot \varepsilon_{2pq}^{2c}\cdot1\cdot (-1)^e\cdot \varepsilon_{2  q}^e\cdot  1\cdot(-1)^{gu +g} \\
				&=&	\varepsilon_{  p}^{2a}\varepsilon_{2pq}^{2c} (-1)^{a+e+g+ug}\varepsilon_{2  q}^e .
			\end{eqnarray*}
			Thus, $a+e+g+ug\equiv 0\pmod2$ and $e=0$. Therefore, $a+g+ug\equiv 0\pmod2$. Since 
			$a +f+gu\equiv 0\pmod2$, we have $f=g$. As $\varepsilon_{2pq}$ is a square in $\KK$ then we may put:
			$$\xi^2=\varepsilon_{2}^a\varepsilon_{p}^a     \sqrt{\varepsilon_{q}}^g\sqrt{\varepsilon_{pq}}^g
			\sqrt{\varepsilon_{2p}}^g,$$
			where $a+g+ug\equiv 0\pmod2$.
			Which gives the result (cf. \eqref{classnumberofKK}). 		\end{enumerate}
	\end{proof}

	\begin{theorem}\label{T_3_1_C5} Let $p\equiv 1\pmod{8}$ and $q\equiv3\pmod 8$ be two primes such that     $\genfrac(){}{0}{p}{q} =1$. 
		Put     $\KK=\QQ(\sqrt 2, \sqrt{p}, \sqrt{q} )$.  Assume furthermore that $p(x-1)$ and $p(v-1)$ are squares in $\mathbb{N}$, where $x$ and $v$ are defined in Lemma \ref{lm expressions of units under cond 3}.
		Then
		\begin{enumerate}[\rm 1.]
			\item Assume that $N(\varepsilon_{2p})=-1$.  We have
			\begin{enumerate}[\rm $\bullet$]
				
				\item The unit group of $\KK$ is :
				$$E_{\KK}=\langle -1,   \varepsilon_{2}, \varepsilon_{p} ,    \sqrt{\varepsilon_{q}}, \sqrt{\varepsilon_{2q}},\sqrt{ \varepsilon_{pq}} ,\sqrt{ \varepsilon_{2pq}}, \sqrt{\varepsilon_{2}\varepsilon_{p}\varepsilon_{2p}} \rangle.$$
				\item  The $2$-class number  of  $\KK$ equals $ \frac{1}{2^{4}} h_2(2p)h_2(pq)h_2(2pq)$.  
			\end{enumerate}
			\item Assume that $N(\varepsilon_{2p})=1$ and let  $a\in\{0,1\}$ be such that $a \equiv 1+u\pmod2$.   We have
			
			\begin{enumerate}[\rm $\bullet$]
				\item  The unit group of $\KK$ is :
				$$E_{\KK}=\langle -1,   \varepsilon_{2}, \varepsilon_{p},   \sqrt{\varepsilon_{q}}, \sqrt{\varepsilon_{2q}},  \sqrt{\varepsilon_{pq}} ,\sqrt{ \varepsilon_{2pq}},    \sqrt{\varepsilon_{2}^{a\alpha}\varepsilon_{p}^{u\alpha}     \sqrt{\varepsilon_{q}}^\alpha \sqrt{\varepsilon_{2q}}^\alpha\sqrt{\varepsilon_{pq}}^\alpha\sqrt{\varepsilon_{2pq}}^\alpha
					\sqrt{\varepsilon_{2p}}^{1+\gamma} }  \rangle$$
				where $\alpha$, $\gamma\in \{0,1\}$ are such that $\alpha\not=\gamma$ and $\alpha =1$ if and only if $ \varepsilon_{2}^{a}\varepsilon_{p}^{u}     \sqrt{\varepsilon_{q}} \sqrt{\varepsilon_{2q}}\sqrt{\varepsilon_{pq}}\sqrt{\varepsilon_{2pq}}
				\sqrt{\varepsilon_{2p}}  $ is a square in $\KK$.
				\item  The $2$-class number  of  $\KK$ equals $ \frac{1}{2^{4-\alpha}} h_2(2p)h_2(pq)h_2(2pq)$.
			\end{enumerate}
		\end{enumerate}
	\end{theorem}
	\begin{proof} 		\begin{enumerate}[\rm 1.] 
			\item Assume that   $N(\varepsilon_{2p})=-1$.
			Note that $\{\varepsilon_{2}, \varepsilon_{p},	\sqrt{\varepsilon_{2}\varepsilon_{p}\varepsilon_{2p}}\}$, is  a fundamental system  of units of $k_1$.
			Using 	Lemma \ref{lm expressions of units under cond 3}, we show that $\{\varepsilon_{2}, \sqrt{\varepsilon_{q}}, \sqrt{\varepsilon_{2q}}\}$ and  $\{ \varepsilon_{2}, 	 \varepsilon_{pq},\sqrt{\varepsilon_{pq}\varepsilon_{2pq}}\}$ are respectively fundamental systems of units of $k_2$ and $k_3$.
			It follows that, 
			$$E_{k_1}E_{k_2}E_{k_3}=\langle-1,  \varepsilon_{2}, \varepsilon_{p} ,   \varepsilon_{pq} ,   \sqrt{\varepsilon_{q}}, \sqrt{\varepsilon_{2q}} ,\sqrt{ \varepsilon_{pq}\varepsilon_{2pq}}, \sqrt{\varepsilon_{2}\varepsilon_{p}\varepsilon_{2p}}\rangle.$$	
			
			Let  $\xi$ be an element of $\KK$ which is the  square root of an element of $E_{k_1}E_{k_2}E_{k_3}$. Therefore, we can assume that
			$$\xi^2=\varepsilon_{2}^a\varepsilon_{p}^b  \varepsilon_{pq}^c  \sqrt{\varepsilon_{q}}^d\sqrt{\varepsilon_{2q}}^e\sqrt{\varepsilon_{pq}\varepsilon_{2pq}}^f
			\sqrt{\varepsilon_{2}\varepsilon_{p}\varepsilon_{2p}}^g,$$
			where $a, b, c, d, e, f$ and $g$ are in $\{0, 1\}$.
			
			\noindent\ding{224}  Let us start	by applying   the norm map $N_{\KK/k_2}=1+\tau_2$.   We have 
			%$\sqrt{\varepsilon_{pq}}^{1+  \tau_2}=1$ and 
			%	$\sqrt{\varepsilon_{2pq}}^{1+  \tau_2}=1$. 
			Then, by Tables \ref{norm q=3 p=1mod4q}  and \ref{tab3}, we get: 
			\begin{eqnarray*}
				N_{\KK/k_2}(\xi^2)&=&
				\varepsilon_{2}^{2a}\cdot (-1)^b \cdot1\cdot \varepsilon_{q}^d\cdot \varepsilon_{2q}^e\cdot1  \cdot (-1)^{gu}\varepsilon_{2}^g\\\
				&=&	\varepsilon_{2}^{2a}  \varepsilon_{q}^c\varepsilon_{2q}^d\cdot(-1)^{b +gu}\varepsilon_{2}^g .
			\end{eqnarray*}
			Thus, $ b +gu\equiv 0 \pmod 2$ and $g=0$. Therefore, $b=0$ and 
			$$\xi^2=\varepsilon_{2}^a  \varepsilon_{pq}^c  \sqrt{\varepsilon_{q}}^d\sqrt{\varepsilon_{2q}}^e\sqrt{\varepsilon_{pq}\varepsilon_{2pq}}^f.$$

			\noindent\ding{224} Let us apply the norm $N_{\KK/k_5}=1+\tau_1\tau_2$, with $k_5=\QQ(\sqrt{q}, \sqrt{2p})$. 	%	We have 
			%	$\sqrt{\varepsilon_{pq}}^{1+ \tau_1\tau_2}=-1$ and 
			%	$\sqrt{\varepsilon_{2pq}}^{1+ \tau_1\tau_2}=\varepsilon_{2pq}$. 
			By Tables \ref{norm q=3 p=1mod4q}  and \ref{tab3}, we get: 
			\begin{eqnarray*}
				N_{\KK/k_5}(\xi^2)&=&(-1)^a\cdot 1\cdot   (-1)^d\cdot \varepsilon_{  q}^d\cdot1 \cdot (-1)^f\cdot \varepsilon_{2pq}^f\\
				&=&	\varepsilon_{2pq}^f (-1)^{a +d +f}\varepsilon_{  q}^d.
			\end{eqnarray*}
			Thus,   $a +d +f\equiv 0 \pmod 2$ and $d=0$. Therefore, $a=f$ and 
			$$\xi^2=\varepsilon_{2}^a  \varepsilon_{pq}^c   \sqrt{\varepsilon_{2q}}^e\sqrt{\varepsilon_{pq}\varepsilon_{2pq}}^a.$$

			\noindent\ding{224} Let us apply the norm $N_{\KK/k_6}=1+\tau_1\tau_3$, with $k_6=\QQ(\sqrt{p}, \sqrt{2q})$. % We have 
			%	$\sqrt{\varepsilon_{pq}}^{1+ \tau_1\tau_3}=1$ and 
			%	$\sqrt{\varepsilon_{2pq}}^{1+ \tau_1\tau_3}=-\varepsilon_{2pq}$. 
			Then, by Tables \ref{norm q=3 p=1mod4q}  and \ref{tab3}, we get: 
			\begin{eqnarray*}
				N_{\KK/k_6}(\xi^2)&=& (-1)^a \cdot 1  \cdot (-1)^e\cdot \varepsilon_{2  q}^e\cdot  (-1)^a \cdot \varepsilon_{2pq}^a\\
				&=&	 (-1)^{e}\varepsilon_{2  q}^e\varepsilon_{2pq}^a .
			\end{eqnarray*}
			Thus   $e=a=0$. Therefore $e=0$ and 
			$$\xi^2=   \varepsilon_{pq}^c .$$
			Since by the above Lemma $\varepsilon_{pq}$ is a square in $\KK$ so we have the first item.
			
			\item		 Assume that   $N(\varepsilon_{2p})=1$. 	We have $\{\varepsilon_{2}, \varepsilon_{p},	\sqrt{ \varepsilon_{2p}}\}$, is  a fundamental system  of units of $k_1$, and
			$\{\varepsilon_{2}, \sqrt{\varepsilon_{q}}, \sqrt{\varepsilon_{2q}}\}$ and  $\{ \varepsilon_{2}, 	 \varepsilon_{pq},\sqrt{\varepsilon_{pq}\varepsilon_{2pq}}\}$ are respectively fundamental systems of units of $k_2$ and $k_3$.
			So we have
			$$E_{k_1}E_{k_2}E_{k_3}=\langle-1,  \varepsilon_{2}, \varepsilon_{p} ,   \varepsilon_{pq} ,   \sqrt{\varepsilon_{q}}, \sqrt{\varepsilon_{2q}} ,\sqrt{ \varepsilon_{pq}\varepsilon_{2pq}},  \sqrt{ \varepsilon_{2p}}\rangle.$$	
			
			Let  $\xi$ be an element of $\KK$ which is the  square root of an element of $E_{k_1}E_{k_2}E_{k_3}$. Therefore, we can assume that
			$$\xi^2=\varepsilon_{2}^a\varepsilon_{p}^b  \varepsilon_{pq}^c  \sqrt{\varepsilon_{q}}^d\sqrt{\varepsilon_{2q}}^e\sqrt{\varepsilon_{pq}\varepsilon_{2pq}}^f
			\sqrt{ \varepsilon_{2p}}^g,$$
			where $a, b, c, d, e, f$ and $g$ are in $\{0, 1\}$.

			\noindent\ding{224}  Let us start	by applying   the norm map $N_{\KK/k_2}=1+\tau_2$. We have:
			\begin{eqnarray*}
				N_{\KK/k_2}(\xi^2)&=&
				\varepsilon_{2}^{2a}\cdot (-1)^b \cdot1\cdot \varepsilon_{q}^d\cdot \varepsilon_{2q}^e\cdot1  \cdot (-1)^{gu}\\\
				&=&	\varepsilon_{2}^{2a}  \varepsilon_{q}^c\varepsilon_{2q}^d\cdot(-1)^{b +gu}  .
			\end{eqnarray*}
			Thus, $ b +gu\equiv 0 \pmod 2$.

			\noindent\ding{224} Let us apply the norm $N_{\KK/k_5}=1+\tau_1\tau_2$, with $k_5=\QQ(\sqrt{q}, \sqrt{2p})$. We have 
			\begin{eqnarray*}
				N_{\KK/k_5}(\xi^2)&=&(-1)^a\cdot (-1)^b\cdot 1\cdot   (-1)^d\cdot \varepsilon_{  q}^d\cdot1 \cdot (-1)^f\cdot \varepsilon_{2pq}^f\cdot (-1)^g\cdot\varepsilon_{2p}^g. \\
				&=&	 \varepsilon_{2pq}^f (-1)^{a+b +d +f+g}\varepsilon_{  q}^d\varepsilon_{2p}^g.
			\end{eqnarray*}
			Thus,   $a+b +d +f+g\equiv 0 \pmod 2$ and  $d = g$. Therefore,  $a+b +f\equiv 0 \pmod 2$ and
			
			$$\xi^2=\varepsilon_{2}^a\varepsilon_{p}^b  \varepsilon_{pq}^c  \sqrt{\varepsilon_{q}}^d\sqrt{\varepsilon_{2q}}^e\sqrt{\varepsilon_{pq}\varepsilon_{2pq}}^f
			\sqrt{ \varepsilon_{2p}}^d.$$
			
			\noindent\ding{224} Let us apply the norm $N_{\KK/k_6}=1+\tau_1\tau_3$, with $k_6=\QQ(\sqrt{p}, \sqrt{2q})$. We have 
			\begin{eqnarray*}
				N_{\KK/k_6}(\xi^2)&=& (-1)^a\cdot \varepsilon_{p}^{2b} \cdot 1\cdot 1  \cdot (-1)^e\cdot \varepsilon_{2  q}^e\cdot  (-1)^f \cdot \varepsilon_{2pq}^f\cdot (-1)^{ud+d}\\
				&=&	 (-1)^{a+e+f+ug+g}\varepsilon_{2  q}^e\varepsilon_{2pq}^f .
			\end{eqnarray*}
			Thus, $ a+e+f+ud+d\equiv 0 \pmod 2$ and $e=f$. Then, $ a+ud+d\equiv 0 \pmod 2$. Since $ b +du\equiv 0 \pmod 2$ and  $a+b +f\equiv 0 \pmod 2$, we have $f=d$ and 
			
			$$\xi^2=\varepsilon_{2}^a\varepsilon_{p}^{ud}  \varepsilon_{pq}^c  \sqrt{\varepsilon_{q}}^d\sqrt{\varepsilon_{2q}}^d\sqrt{\varepsilon_{pq}\varepsilon_{2pq}}^d
			\sqrt{ \varepsilon_{2p}}^d.$$
			
			Since by the above Lemma $\varepsilon_{pq}$ is a square in $\KK$.  then we can put  
			$$\xi^2=\varepsilon_{2}^a\varepsilon_{p}^{ud}     \sqrt{\varepsilon_{q}}^d\sqrt{\varepsilon_{2q}}^d\sqrt{\varepsilon_{pq}\varepsilon_{2pq}}^d
			\sqrt{ \varepsilon_{2p}}^d.$$
			where $ a+ud+d\equiv 0 \pmod 2$. Which completes the proof. 	 		
			% 	\noindent\ding{224} Let us apply the norm $N_{\KK/k_4}=1+\tau_1$, with $k_4=\QQ(\sqrt{p}, \sqrt{q})$. We have 
			% \begin{eqnarray}\label{T_3_1_tau1_N=-1 C5}
			%	\begin{tabular}{ |c|c|c|c|c|c|c|c|c|}
			%		\hline
			%		$\varepsilon$&$\varepsilon_{2}$ &${\varepsilon_{p}}$&   $\sqrt{\varepsilon_{q}}$&$\sqrt{\varepsilon_{2q}}$&$\sqrt{\varepsilon_{pq}}$  & $\sqrt{\varepsilon_{2pq}}$ \\
			%		\hline
			%		$\varepsilon^{1+\tau_1}$&$-1$ &$\varepsilon_{p}^2$& $-\varepsilon_{q}$&$1$& $-\varepsilon_{pq}$ & $1$   \\
			%		\hline
			%	\end{tabular}
			%\end{eqnarray}
			
			%	\begin{eqnarray*}
			%	N_{\KK/k_5}(\xi^2)&=&(-1)^a\cdot \varepsilon_{  p}^{2ud}\cdot   (-1)^d\cdot \varepsilon_{  q}^d \cdot 1 \cdot (-1)^d\cdot \varepsilon_{pq}^d\cdot (-1)^{ud+d}. \\
			%	&=&	 \varepsilon_{2pq}^f (-1)^{a+b +d +f+g}\varepsilon_{  q}^d\varepsilon_{2p}^g.
			%\end{eqnarray*}
		\end{enumerate}
	\end{proof}

	\begin{theorem}\label{T_3_1_C6}  Let $p\equiv 1\pmod{8}$ and $q\equiv3\pmod 8$ be two primes such that     $\genfrac(){}{0}{p}{q} =1$. 
		Put     $\KK=\QQ(\sqrt 2, \sqrt{p}, \sqrt{q} )$.  Assume furthermore that $p(x-1)$ and $2p(v+1)$ are squares in $\mathbb{N}$, where $x$ and $v$ are defined in Lemma \ref{lm expressions of units under cond 3}.
		Then
		\begin{enumerate}[\rm 1.]
			\item Assume that $N(\varepsilon_{2p})=-1$.  We have
			\begin{enumerate}[\rm $\bullet$]
				
				\item The unit group of $\KK$ is :
				$$E_{\KK}=\langle -1,   \varepsilon_{2}, \varepsilon_{p} ,    \sqrt{\varepsilon_{q}},  \sqrt{ \varepsilon_{pq}} ,\sqrt{ \varepsilon_{2pq}}, \sqrt{\varepsilon_{2}\varepsilon_{p}\varepsilon_{2p}}, \sqrt{ \varepsilon_{p}^\alpha      \sqrt{\varepsilon_{2q}}^{1+\gamma}\sqrt{\varepsilon_{pq}\varepsilon_{2pq}}^\alpha} \rangle.$$
				where $\alpha$, $\gamma\in \{0,1\}$ are such that $\alpha\not=\gamma$ and $\alpha =1$ if and only if   $\varepsilon_{p}   \sqrt{\varepsilon_{2q}}  \sqrt{\varepsilon_{pq}\varepsilon_{2pq}} $ is a square in $\KK$
				\item  The $2$-class number  of  $\KK$ equals $ \frac{1}{2^{4}} h_2(2p)h_2(pq)h_2(2pq)$.  
			\end{enumerate}
			\item Assume that $N(\varepsilon_{2p})=1$ and let $a\in\{0,1\}$ be  such that $a \equiv 1+u\pmod2$.   We have
			
			\begin{enumerate}[\rm $\bullet$]
				\item  The unit group of $\KK$ is :
				$$E_{\KK}=\langle -1,   \varepsilon_{2}, \varepsilon_{p},   \sqrt{\varepsilon_{q}}, \sqrt{\varepsilon_{2q}},  \sqrt{\varepsilon_{pq}} ,\sqrt{ \varepsilon_{2pq}},    \sqrt{\varepsilon_{2}^{a\alpha}\varepsilon_{p}^{u\alpha}     \sqrt{\varepsilon_{q}}^\alpha \sqrt{\varepsilon_{2q}}^\alpha\sqrt{\varepsilon_{pq}}^\alpha\sqrt{\varepsilon_{2pq}}^\alpha
					\sqrt{\varepsilon_{2p}}^{1+\gamma} }  \rangle$$
				where $\alpha$, $\gamma\in \{0,1\}$ are such that $\alpha\not=\gamma$ and $\alpha =1$ if and only if $ \varepsilon_{2}^{a }\varepsilon_{p}^{u }     \sqrt{\varepsilon_{q}}  \sqrt{\varepsilon_{2q}} \sqrt{\varepsilon_{pq}} \sqrt{\varepsilon_{2pq}} 
				\sqrt{\varepsilon_{2p}}  $ is a square in $\KK$.
				\item  The $2$-class number  of  $\KK$ equals $ \frac{1}{2^{4-\alpha}} h_2(2p)h_2(pq)h_2(2pq)$.
			\end{enumerate}
		\end{enumerate}
	\end{theorem}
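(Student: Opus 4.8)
The plan is to run the Wada descent exactly as in the previous theorems of this subsection. First I would fix fundamental systems of units of $k_1,k_2,k_3$, assemble $E_{k_1}E_{k_2}E_{k_3}$, and then decide which of its elements have a square root in $\KK$ by pushing a general candidate through the five norm maps $1+\tau_2,\,1+\tau_1\tau_2,\,1+\tau_1\tau_3,\,1+\tau_2\tau_3,\,1+\tau_1$, i.e. $N_{\KK/k_2},N_{\KK/k_5},N_{\KK/k_6},N_{\KK/k_3},N_{\KK/k_4}$. By Lemma \ref{units of k1} a system for $k_1$ is $\{\varepsilon_2,\varepsilon_p,\sqrt{\varepsilon_2\varepsilon_p\varepsilon_{2p}}\}$ when $N(\varepsilon_{2p})=-1$ and $\{\varepsilon_2,\varepsilon_p,\sqrt{\varepsilon_{2p}}\}$ when $N(\varepsilon_{2p})=1$, and by Lemma \ref{lm expressions of units under cond 1 eps_q, 2q} a system for $k_2$ is $\{\varepsilon_2,\sqrt{\varepsilon_q},\sqrt{\varepsilon_{2q}}\}$. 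The decisive input is Lemma \ref{lm expressions of units under cond 3}: the assumption that $p(x-1)$ and $2p(v+1)$ are squares forces $\sqrt{\varepsilon_{2pq}}\in k_6$ and $\sqrt{\varepsilon_{pq}}\in k_4$, hence both $\varepsilon_{pq}$ and $\varepsilon_{2pq}$ are already squares in $\KK$, while $\sqrt{\varepsilon_{pq}\varepsilon_{2pq}}\in k_3$, so a system for $k_3$ is $\{\varepsilon_2,\varepsilon_{pq},\sqrt{\varepsilon_{pq}\varepsilon_{2pq}}\}$. Thus I would work with
$$E_{k_1}E_{k_2}E_{k_3}=\langle -1,\varepsilon_2,\varepsilon_p,\varepsilon_{pq},\sqrt{\varepsilon_q},\sqrt{\varepsilon_{2q}},\sqrt{\varepsilon_{pq}\varepsilon_{2pq}},\zeta\rangle,$$
with $\zeta=\sqrt{\varepsilon_2\varepsilon_p\varepsilon_{2p}}$ in the first case and $\zeta=\sqrt{\varepsilon_{2p}}$ in the second.

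Writing a general candidate $\xi^2=\varepsilon_2^{a}\varepsilon_p^{b}\varepsilon_{pq}^{c}\sqrt{\varepsilon_q}^{\,d}\sqrt{\varepsilon_{2q}}^{\,e}\sqrt{\varepsilon_{pq}\varepsilon_{2pq}}^{\,f}\zeta^{g}$, I would compute each of the five norms by reading the images of $\varepsilon_2,\varepsilon_p,\sqrt{\varepsilon_q},\sqrt{\varepsilon_{2q}}$ off Table \ref{norm q=3 p=1mod4q}, those of $\sqrt{\varepsilon_{pq}\varepsilon_{2pq}}$ off the $2p(v+1)$ and $p(x-1)$ rows of Table \ref{tab3}, and (when $N(\varepsilon_{2p})=1$) those of $\sqrt{\varepsilon_{2p}}$ off \eqref{T_3_-1_eqi2p_N=1}. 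Each norm image is a square in $\KK$, hence a square in the target quartic field; the mechanism is to turn this into a congruence on $a,\dots,g$ modulo $2$ using which units, and which products of units, are squares there. The facts I would invoke (all from Lemmas \ref{lm expressions of units under cond 3} and \ref{lm expressions of units under cond 1 eps_q, 2q}) are that $\varepsilon_{2pq}$ is a square in $k_5$, $\varepsilon_{pq}$ is a square in $k_4$, $\sqrt{\varepsilon_{pq}\varepsilon_{2pq}}\in k_3$, and that the mixed products $\varepsilon_q\varepsilon_{2p}$ and $\varepsilon_{2q}\varepsilon_{2pq}$ are squares in $k_5$ and $k_6$ respectively although no single factor is. Solving the resulting linear system collapses $\xi^2$ to a one-parameter family of the shape recorded in the statement.

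The crux is that the five norms leave this last parameter undetermined: after the elimination one still has to decide whether the explicit product $\varepsilon_p\sqrt{\varepsilon_{2q}}\sqrt{\varepsilon_{pq}\varepsilon_{2pq}}$ (in the case $N(\varepsilon_{2p})=-1$, and its analogue when $N(\varepsilon_{2p})=1$) is a square in $\KK$, and this is invisible to norms. I would therefore record it in the parameter $\alpha\in\{0,1\}$, set $\alpha=1$ exactly when that product is a square, put $\gamma=1-\alpha$, and read the two displayed forms of $E_\KK$ from the two values. I expect this inconclusiveness to be the main obstacle. It is precisely the step that in the case $\genfrac(){}{0}{p}{q}=-1$ (Theorem \ref{T_3_-1}) was closed by comparing $h(\KK)$ with $\tfrac12 h_2(k_5)$ across the unramified extension $\KK/k_5$; that shortcut is no longer available, since by Lemma \ref{class numbers of quadratic field} the factors $h_2(pq)$ and $h_2(2pq)$ are now divisible by $4$, so the description must remain conditional on $\alpha$.

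Finally, for the $2$-class number I would count the square roots actually adjoined to $\prod_i E_{k_i}$: the root $\sqrt{\varepsilon_{pq}}$ (which, together with $\sqrt{\varepsilon_{pq}\varepsilon_{2pq}}$, also produces $\sqrt{\varepsilon_{2pq}}$) and, when $\alpha=1$, the extra $\alpha$-generator; this fixes the unit index $q(\KK)$, and substituting into \eqref{classnumberofKK} (that is, Lemma \ref{wada's f.} together with the quadratic $2$-class numbers of Lemma \ref{class numbers of quadratic field}) yields the stated value. The case $N(\varepsilon_{2p})=1$ runs in the same way, now carrying $\sqrt{\varepsilon_{2p}}$ and its images from \eqref{T_3_-1_eqi2p_N=1} through the five norms, exactly as in the second part of Theorem \ref{T_3_1_C5}.
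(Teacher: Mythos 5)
Your proposal follows the paper's proof essentially verbatim: the same fundamental systems for $k_1,k_2,k_3$ (with $\{\varepsilon_2,\varepsilon_{pq},\sqrt{\varepsilon_{pq}\varepsilon_{2pq}}\}$ for $k_3$), the same norm-map eliminations read off Tables \ref{norm q=3 p=1mod4q} and \ref{tab3} using exactly the parity facts you list, the same residual ambiguity packaged into the conditional parameter $\alpha$, and the same Kuroda-type count for $h_2(\KK)$ via \eqref{classnumberofKK}. The only slip is cosmetic: when $p(x-1)$ is a square it is $\sqrt{2\varepsilon_{2pq}}$ that lies in $k_6$, while $\sqrt{\varepsilon_{2pq}}$ itself lies in $k_5$ (the fact you correctly invoke later), so the conclusion that $\varepsilon_{2pq}$ is a square in $\KK$ is unaffected.
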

	\begin{proof} 		\begin{enumerate}[\rm 1.] 
			\item Assume that   $N(\varepsilon_{2p})=-1$.
			Note that $\{\varepsilon_{2}, \varepsilon_{p},	\sqrt{\varepsilon_{2}\varepsilon_{p}\varepsilon_{2p}}\}$, is  a fundamental system  of units of $k_1$.
			Using 	Lemma \ref{lm expressions of units under cond 3}, we show that $\{\varepsilon_{2}, \sqrt{\varepsilon_{q}}, \sqrt{\varepsilon_{2q}}\}$ and  $\{ \varepsilon_{2}, 	 \varepsilon_{pq},\sqrt{\varepsilon_{pq}\varepsilon_{2pq}}\}$ are respectively fundamental systems of units of $k_2$ and $k_3$.
			It follows that, 
			$$E_{k_1}E_{k_2}E_{k_3}=\langle-1,  \varepsilon_{2}, \varepsilon_{p} ,   \varepsilon_{pq} ,   \sqrt{\varepsilon_{q}}, \sqrt{\varepsilon_{2q}} ,\sqrt{ \varepsilon_{pq}\varepsilon_{2pq}}, \sqrt{\varepsilon_{2}\varepsilon_{p}\varepsilon_{2p}}\rangle.$$	
			
			Let  $\xi$ be an element of $\KK$ which is the  square root of an element of $E_{k_1}E_{k_2}E_{k_3}$. Therefore, we can assume that
			$$\xi^2=\varepsilon_{2}^a\varepsilon_{p}^b  \varepsilon_{pq}^c  \sqrt{\varepsilon_{q}}^d\sqrt{\varepsilon_{2q}}^e\sqrt{\varepsilon_{pq}\varepsilon_{2pq}}^f
			\sqrt{\varepsilon_{2}\varepsilon_{p}\varepsilon_{2p}}^g,$$
			where $a, b, c, d, e, f$ and $g$ are in $\{0, 1\}$.
			
			\noindent\ding{224}  Let us start	by applying   the norm map $N_{\KK/k_2}=1+\tau_2$. By Tables \ref{norm q=3 p=1mod4q}  and \ref{tab3}  we have 
			\begin{eqnarray*}
				N_{\KK/k_2}(\xi^2)&=&
				\varepsilon_{2}^{2a}\cdot (-1)^b \cdot1\cdot \varepsilon_{q}^d\cdot \varepsilon_{2q}^e\cdot(-1)^f  \cdot (-1)^{gu}\varepsilon_{2}^g\\\
				&=&	\varepsilon_{2}^{2a}  \varepsilon_{q}^c\varepsilon_{2q}^d\cdot(-1)^{b +f+gu}\varepsilon_{2}^g .
			\end{eqnarray*}
			Thus, $ b +f+gu\equiv 0 \pmod 2$ and $g=0$. Therefore, $b=f$ and 
			$$\xi^2=\varepsilon_{2}^a\varepsilon_{p}^b  \varepsilon_{pq}^c  \sqrt{\varepsilon_{q}}^d\sqrt{\varepsilon_{2q}}^e\sqrt{\varepsilon_{pq}\varepsilon_{2pq}}^b.$$

			\noindent\ding{224} Let us apply the norm $N_{\KK/k_5}=1+\tau_1\tau_2$, with $k_5=\QQ(\sqrt{q}, \sqrt{2p})$.  Tables \ref{norm q=3 p=1mod4q}  and \ref{tab3} give:
			%	We have 
			%	$\sqrt{\varepsilon_{pq}}^{1+ \tau_1\tau_2}=-1$ and 
			%$\sqrt{\varepsilon_{2pq}}^{1+ \tau_1\tau_2}=\varepsilon_{2pq}$. Then, by \eqref{norm q=3 p=1mod4q}, we get: 
			\begin{eqnarray*}
				N_{\KK/k_5}(\xi^2)&=&(-1)^a\cdot(-1)^b\cdot 1\cdot   (-1)^d\cdot \varepsilon_{  q}^d\cdot1 \cdot (-1)^b\cdot \varepsilon_{2pq}^b\\
				&=&	\varepsilon_{2pq}^b (-1)^{a +d }\varepsilon_{  q}^d.
			\end{eqnarray*}
			Thus,   $a=d=0$. Therefore,  
			$$\xi^2= \varepsilon_{p}^b  \varepsilon_{pq}^c   \sqrt{\varepsilon_{2q}}^e\sqrt{\varepsilon_{pq}\varepsilon_{2pq}}^b.$$

			\noindent\ding{224} Let us apply the norm $N_{\KK/k_6}=1+\tau_1\tau_3$, with $k_6=\QQ(\sqrt{p}, \sqrt{2q})$. So by Tables \ref{norm q=3 p=1mod4q}  and \ref{tab3}  we have %We have 
			%	$\sqrt{\varepsilon_{pq}}^{1+ \tau_1\tau_3}=1$ and 
			%	$\sqrt{\varepsilon_{2pq}}^{1+ \tau_1\tau_3}=-\varepsilon_{2pq}$. Then, by \eqref{norm q=3 p=1mod4q}, we get: 
			\begin{eqnarray*}
				N_{\KK/k_6}(\xi^2)&=& \varepsilon_{2  p}^{2b} \cdot 1  \cdot (-1)^e\cdot \varepsilon_{2  q}^e\cdot  (-1)^b \cdot \varepsilon_{2pq}^b\\
				&=&	 \varepsilon_{2  p}^{2b}(-1)^{e+b}\varepsilon_{2  q}^e\varepsilon_{2pq}^b .
			\end{eqnarray*}
			Thus   $b=e$. Therefore   
			$$\xi^2= \varepsilon_{p}^b  \varepsilon_{pq}^c   \sqrt{\varepsilon_{2q}}^b\sqrt{\varepsilon_{pq}\varepsilon_{2pq}}^b.$$
			
			By applying the other norms we deduce nothing new.	Since by the above Lemma $\varepsilon_{pq}$ is a square in $\KK$, we have the first item.
			
			\item		 Assume that   $N(\varepsilon_{2p})=1$. 	We have $\{\varepsilon_{2}, \varepsilon_{p},	\sqrt{ \varepsilon_{2p}}\}$  is  a fundamental system  of units of $k_1$, and
			$\{\varepsilon_{2}, \sqrt{\varepsilon_{q}}, \sqrt{\varepsilon_{2q}}\}$ and  $\{ \varepsilon_{2}, 	 \varepsilon_{pq},\sqrt{\varepsilon_{pq}\varepsilon_{2pq}}\}$ are respectively fundamental systems of units of $k_2$ and $k_3$.
			So we have
			$$E_{k_1}E_{k_2}E_{k_3}=\langle-1,  \varepsilon_{2}, \varepsilon_{p} ,   \varepsilon_{pq} ,   \sqrt{\varepsilon_{q}}, \sqrt{\varepsilon_{2q}} ,\sqrt{ \varepsilon_{pq}\varepsilon_{2pq}},  \sqrt{ \varepsilon_{2p}}\rangle.$$	
			
			Let  $\xi$ be an element of $\KK$ which is the  square root of an element of $E_{k_1}E_{k_2}E_{k_3}$. Therefore, we can assume that
			$$\xi^2=\varepsilon_{2}^a\varepsilon_{p}^b  \varepsilon_{pq}^c  \sqrt{\varepsilon_{q}}^d\sqrt{\varepsilon_{2q}}^e\sqrt{\varepsilon_{pq}\varepsilon_{2pq}}^f
			\sqrt{ \varepsilon_{2p}}^g,$$
			where $a, b, c, d, e, f$ and $g$ are in $\{0, 1\}$.

			\noindent\ding{224}  Let us start	by applying   the norm map $N_{\KK/k_2}=1+\tau_2$. We have:
			\begin{eqnarray*}
				N_{\KK/k_2}(\xi^2)&=&
				\varepsilon_{2}^{2a}\cdot (-1)^b \cdot1\cdot \varepsilon_{q}^d\cdot \varepsilon_{2q}^e\cdot1  \cdot (-1)^{gu}\\\
				&=&	\varepsilon_{2}^{2a}  \varepsilon_{q}^c\varepsilon_{2q}^d\cdot(-1)^{b +gu}  .
			\end{eqnarray*}
			Thus, $ b +gu\equiv 0 \pmod 2$.

			\noindent\ding{224} Let us apply the norm $N_{\KK/k_5}=1+\tau_1\tau_2$, with $k_5=\QQ(\sqrt{q}, \sqrt{2p})$. We have 
			\begin{eqnarray*}
				N_{\KK/k_5}(\xi^2)&=&(-1)^a\cdot (-1)^b\cdot 1\cdot   (-1)^d\cdot \varepsilon_{  q}^d\cdot1 \cdot (-1)^f\cdot \varepsilon_{2pq}^f\cdot (-1)^g\cdot\varepsilon_{2p}^g. \\
				&=&	 \varepsilon_{2pq}^f (-1)^{a+b +d +f+g}\varepsilon_{  q}^d\varepsilon_{2p}^g.
			\end{eqnarray*}
			Thus,   $a+b +d +f+g\equiv 0 \pmod 2$ and  $d = g$. Therefore,  $a+b +f\equiv 0 \pmod 2$ and
			
			$$\xi^2=\varepsilon_{2}^a\varepsilon_{p}^b  \varepsilon_{pq}^c  \sqrt{\varepsilon_{q}}^d\sqrt{\varepsilon_{2q}}^e\sqrt{\varepsilon_{pq}\varepsilon_{2pq}}^f
			\sqrt{ \varepsilon_{2p}}^d.$$
			
			\noindent\ding{224} Let us apply the norm $N_{\KK/k_6}=1+\tau_1\tau_3$, with $k_6=\QQ(\sqrt{p}, \sqrt{2q})$. We have 
			\begin{eqnarray*}
				N_{\KK/k_6}(\xi^2)&=& (-1)^a\cdot \varepsilon_{p}^{2b} \cdot 1\cdot 1  \cdot (-1)^e\cdot \varepsilon_{2  q}^e\cdot  (-1)^f \cdot \varepsilon_{2pq}^f\cdot (-1)^{ud+d}\\
				&=&	\varepsilon_{p}^{2b} (-1)^{a+e+f+ud+d}\varepsilon_{2  q}^e\varepsilon_{2pq}^f .
			\end{eqnarray*}
			Thus, $ a+e+f+ud+d\equiv 0 \pmod 2$ and $e=f$. Then, $ a+ud+d\equiv 0 \pmod 2$. Since $ b +du\equiv 0 \pmod 2$ and  $a+b +f\equiv 0 \pmod 2$, we have $f=d$ and 
			
			$$\xi^2=\varepsilon_{2}^a\varepsilon_{p}^{ud}  \varepsilon_{pq}^c  \sqrt{\varepsilon_{q}}^d\sqrt{\varepsilon_{2q}}^d\sqrt{\varepsilon_{pq}\varepsilon_{2pq}}^d
			\sqrt{ \varepsilon_{2p}}^d.$$
			
			Since by the above Lemma $\varepsilon_{pq}$ is a square in $\KK$.  then we can put  
			$$\xi^2=\varepsilon_{2}^a\varepsilon_{p}^{ud}     \sqrt{\varepsilon_{q}}^d\sqrt{\varepsilon_{2q}}^d\sqrt{\varepsilon_{pq}\varepsilon_{2pq}}^d
			\sqrt{ \varepsilon_{2p}}^d.$$
			where $ a+ud+d\equiv 0 \pmod 2$. Which completes the proof. 	 		
			
		\end{enumerate}
	\end{proof}

	\begin{theorem}\label{T_3_1_C7} Let $p\equiv 1\pmod{8}$ and $q\equiv3\pmod 8$ be two primes such that     $\genfrac(){}{0}{p}{q} =1$. 
		Put     $\KK=\QQ(\sqrt 2, \sqrt{p}, \sqrt{q} )$.  Assume furthermore that $2p(x+1)$ and $ (v-1)$ are squares in $\mathbb{N}$, where $x$ and $v$ are defined in Lemma \ref{lm expressions of units under cond 3}.
		Then
		\begin{enumerate}[\rm 1.]
			\item Assume that $N(\varepsilon_{2p})=-1$.  We have
			\begin{enumerate}[\rm $\bullet$]
				
				\item The unit group of $\KK$ is :
				$$E_{\KK}=\langle -1,   \varepsilon_{2}, \varepsilon_{p} ,    \sqrt{\varepsilon_{q}}, \sqrt{\varepsilon_{2q}},\sqrt{ \varepsilon_{pq}} ,\sqrt{ \varepsilon_{2pq}}, \sqrt{\varepsilon_{2}\varepsilon_{p}\varepsilon_{2p}} \rangle.$$
				\item  The $2$-class number  of  $\KK$ equals $ \frac{1}{2^{4}} h_2(2p)h_2(pq)h_2(2pq)$.  
			\end{enumerate}
			\item Assume that $N(\varepsilon_{2p})=1$ and let $a\in\{0,1\}$ be such that $a \equiv 1+u\pmod2$.   We have
			
			\begin{enumerate}[\rm $\bullet$]
				\item  The unit group of $\KK$ is :
				$$E_{\KK}=\langle -1,   \varepsilon_{2}, \varepsilon_{p},   \sqrt{\varepsilon_{q}}, \sqrt{\varepsilon_{2q}},  \sqrt{\varepsilon_{pq}} ,\sqrt{ \varepsilon_{2pq}},    \sqrt{\varepsilon_{2}^{a\alpha}\varepsilon_{p}^{a\alpha}     \sqrt{\varepsilon_{q}}^{\alpha} \sqrt{\varepsilon_{pq}}^{\alpha}
					\sqrt{ \varepsilon_{2p}}^{1+\gamma}}  
				\rangle$$
				where $\alpha$, $\gamma\in \{0,1\}$ are such that $\alpha\not=\gamma$ and $\alpha =1$ if and only if $ \varepsilon_{2}^{a }\varepsilon_{p}^{a }     \sqrt{\varepsilon_{q}}  \sqrt{\varepsilon_{pq}} 
				\sqrt{ \varepsilon_{2p}} $ is a square in $\KK$.
				\item  The $2$-class number  of  $\KK$ equals $ \frac{1}{2^{4-\alpha}} h_2(2p)h_2(pq)h_2(2pq)$.
			\end{enumerate}
		\end{enumerate}
	\end{theorem}  	
	\begin{proof} 		\begin{enumerate}[\rm 1.] 
			\item Assume that   $N(\varepsilon_{2p})=-1$.
			Note that $\{\varepsilon_{2}, \varepsilon_{p},	\sqrt{\varepsilon_{2}\varepsilon_{p}\varepsilon_{2p}}\}$, is  a fundamental system  of units of $k_1$.
			Using 	Lemma \ref{lm expressions of units under cond 3}, we show that $\{\varepsilon_{2}, \sqrt{\varepsilon_{q}}, \sqrt{\varepsilon_{2q}}\}$ and  $\{ \varepsilon_{2}, 	 \varepsilon_{2pq},\sqrt{\varepsilon_{pq}}\}$ are respectively fundamental systems of units of $k_2$ and $k_3$.
			It follows that, 
			$$E_{k_1}E_{k_2}E_{k_3}=\langle-1,  \varepsilon_{2}, \varepsilon_{p} ,   \varepsilon_{2pq} ,   \sqrt{\varepsilon_{q}}, \sqrt{\varepsilon_{2q}} ,\sqrt{ \varepsilon_{pq}}, \sqrt{\varepsilon_{2}\varepsilon_{p}\varepsilon_{2p}}\rangle.$$	
			
			%	Notice that, by 	Lemma \ref{lm expressions of units under cond 3}, $\varepsilon_{2pq}$ is a square in in $\KK$.
			Let  $\xi$ be an element of $\KK$ which is the  square root of an element of $E_{k_1}E_{k_2}E_{k_3}$. So, we can assume that
			$$\xi^2=\varepsilon_{2}^a\varepsilon_{p}^b  \varepsilon_{2pq}^c  \sqrt{\varepsilon_{q}}^d\sqrt{\varepsilon_{2q}}^e\sqrt{\varepsilon_{pq}}^f
			\sqrt{\varepsilon_{2}\varepsilon_{p}\varepsilon_{2p}}^g,$$
			where $a, b, c, d, e, f$ and $g$ are in $\{0, 1\}$.
			
			\noindent\ding{224}  Let us start	by applying   the norm map $N_{\KK/k_2}=1+\tau_2$. By Tables \ref{norm q=3 p=1mod4q}  and \ref{tab3}  we have 
			\begin{eqnarray*}
				N_{\KK/k_2}(\xi^2)&=&
				\varepsilon_{2}^{2a}\cdot (-1)^b \cdot1\cdot \varepsilon_{q}^d\cdot \varepsilon_{2q}^e\cdot(-1)^{f}  \cdot (-1)^{gu}\varepsilon_{2}^g\\\
				&=&	\varepsilon_{2}^{2a}  \varepsilon_{q}^c\varepsilon_{2q}^d\cdot(-1)^{b +f+gu}\varepsilon_{2}^g .
			\end{eqnarray*}
			Thus, $ b + f+gu\equiv 0 \pmod 2$ and $g=0$. Therefore, $b=f$ and 
			$$\xi^2=\varepsilon_{2}^a\varepsilon_{p}^b  \varepsilon_{2pq}^c  \sqrt{\varepsilon_{q}}^d\sqrt{\varepsilon_{2q}}^e\sqrt{\varepsilon_{pq}}^b
			.$$

			\noindent\ding{224} Let us apply the norm $N_{\KK/k_5}=1+\tau_1\tau_2$, with $k_5=\QQ(\sqrt{q}, \sqrt{2p})$.  Tables \ref{norm q=3 p=1mod4q}  and \ref{tab3} give:
			%	We have 
			%	$\sqrt{\varepsilon_{pq}}^{1+ \tau_1\tau_2}=-1$ and 
			%$\sqrt{\varepsilon_{2pq}}^{1+ \tau_1\tau_2}=\varepsilon_{2pq}$. Then, by \eqref{norm q=3 p=1mod4q}, we get: 
			\begin{eqnarray*}
				N_{\KK/k_5}(\xi^2)&=&(-1)^a\cdot \varepsilon_{p}^{2b}\cdot \varepsilon_{2pq}^{2c}\cdot   (-1)^d\cdot \varepsilon_{  q}^d\cdot1 \cdot   1\\
				&=&	  \varepsilon_{p}^{2b}  \varepsilon_{2pq}^{2c} (-1)^{a +d  }\varepsilon_{  q}^d.
			\end{eqnarray*}
			Thus,   $a  =  d=0$. Therefore,   
			$$\xi^2= \varepsilon_{p}^b  \varepsilon_{2pq}^c  \sqrt{\varepsilon_{2q}}^e\sqrt{\varepsilon_{pq}}^b
			.$$

			\noindent\ding{224} Let us apply the norm $N_{\KK/k_6}=1+\tau_1\tau_3$, with $k_6=\QQ(\sqrt{p}, \sqrt{2q})$. So by Tables \ref{norm q=3 p=1mod4q}  and \ref{tab3}  we have %We 
			\begin{eqnarray*}
				N_{\KK/k_6}(\xi^2)&=& \varepsilon_{p}^{2b} \cdot  \varepsilon_{2pq}^{2c}\cdot(-1)^{ e}  \cdot \varepsilon_{2q}^e\cdot 1\\
				&=&	 \varepsilon_{p}^{2b}\varepsilon_{2pq}^{2c} (-1)^{e}\varepsilon_{2  q}^e.
			\end{eqnarray*}
			So $e=0$ and therefore, 	 
			$$\xi^2= \varepsilon_{p}^b  \varepsilon_{2pq}^c  \sqrt{\varepsilon_{pq}}^b
			.$$
			\noindent\ding{224} Let us apply       the norm map $N_{\KK/k_3}=1+\tau_2\tau_3$, with $k_3=\QQ(\sqrt{2}, \sqrt{pq})$.  
			So, by Tables \ref{norm q=3 p=1mod4q}  and \ref{tab3}, we get:
			\begin{eqnarray*}
				N_{\KK/k_3}(\xi^2)&=&  (-1)^b \cdot \varepsilon_{2pq}^{2c} \cdot \varepsilon_{ pq}^e \\\
				&=&	\varepsilon_{2pq}^{2c} \cdot(-1)^{b} \varepsilon_{ pq}^b .
			\end{eqnarray*}
			Thus,  	  $b=0$ and so  $\xi^2=    \varepsilon_{2pq}^c $. Since  by Lemma   \ref{lm expressions of units under cond 3}, $ \varepsilon_{2pq}$ is a square in $\KK$, then we have the result.

			\item		 Assume that   $N(\varepsilon_{2p})=1$. In this case we have 
			$$E_{k_1}E_{k_2}E_{k_3}=\langle-1,  \varepsilon_{2}, \varepsilon_{p} ,   \varepsilon_{2pq} ,   \sqrt{\varepsilon_{q}}, \sqrt{\varepsilon_{2q}} ,\sqrt{ \varepsilon_{pq}}, \sqrt{ \varepsilon_{2p}}\rangle.$$	
			
			Let  $\xi$ be an element of $\KK$ which is the  square root of an element of $E_{k_1}E_{k_2}E_{k_3}$. So, we can assume that
			$$\xi^2=\varepsilon_{2}^a\varepsilon_{p}^b  \varepsilon_{2pq}^c  \sqrt{\varepsilon_{q}}^d\sqrt{\varepsilon_{2q}}^e\sqrt{\varepsilon_{pq}}^f
			\sqrt{ \varepsilon_{2p}}^g,$$
			where $a, b, c, d, e, f$ and $g$ are in $\{0, 1\}$.

			\noindent\ding{224}  Let us start	by applying   the norm map $N_{\KK/k_2}=1+\tau_2$. We have:
			\begin{eqnarray*}
				N_{\KK/k_2}(\xi^2)&=&
				\varepsilon_{2}^{2a}\cdot (-1)^b \cdot1\cdot \varepsilon_{q}^d\cdot \varepsilon_{2q}^e\cdot(-1)^f  \cdot (-1)^{gu}\\\
				&=&	\varepsilon_{2}^{2a}  \varepsilon_{q}^c\varepsilon_{2q}^d\cdot(-1)^{b+f +gu}  .
			\end{eqnarray*}
			So we have   $ b+f +gu\equiv 0 \pmod 2$.

			\noindent\ding{224} Let us apply the norm $N_{\KK/k_5}=1+\tau_1\tau_2$, with $k_5=\QQ(\sqrt{q}, \sqrt{2p})$. We have 
			\begin{eqnarray*}
				N_{\KK/k_5}(\xi^2)&=&(-1)^a\cdot (-1)^b\cdot \varepsilon_{2pq}\cdot   (-1)^d\cdot \varepsilon_{  q}^d\cdot1 \cdot 1\cdot (-1)^g\cdot\varepsilon_{2p}^g. \\
				&=&	 \varepsilon_{2pq}^f (-1)^{a+b +d +g}\varepsilon_{  q}^d\varepsilon_{2p}^g.
			\end{eqnarray*}
			Thus,   $a+b +d  +g\equiv 0 \pmod 2$ and  $d = g$. Therefore,  $a=b$ and
			$$\xi^2=\varepsilon_{2}^a\varepsilon_{p}^a  \varepsilon_{2pq}^c  \sqrt{\varepsilon_{q}}^d\sqrt{\varepsilon_{2q}}^e\sqrt{\varepsilon_{pq}}^f
			\sqrt{ \varepsilon_{2p}}^d,$$
			
			\noindent\ding{224} Let us apply the norm $N_{\KK/k_6}=1+\tau_1\tau_3$, with $k_6=\QQ(\sqrt{p}, \sqrt{2q})$. We have 
			\begin{eqnarray*}
				N_{\KK/k_6}(\xi^2)&=& (-1)^a\cdot \varepsilon_{p}^{2a} \cdot \varepsilon_{2pq}^{2c}\cdot 1  \cdot (-1)^e\cdot \varepsilon_{2  q}^e  \cdot (-1)^{ud+d}\\
				&=&	\varepsilon_{p}^{2a} \varepsilon_{2pq}^{2c} (-1)^{a+e +ud+d}\varepsilon_{2  q}^e  .
			\end{eqnarray*}
			Thus, $ a+e +ud+d\equiv 0 \pmod 2$ and $e=0$. Then, $ a+ud+d\equiv 0 \pmod 2$. As the above discussions imply  $ a+f +du\equiv 0 \pmod 2$, then $f=d$. Therefore;
			$$\xi^2=\varepsilon_{2}^a\varepsilon_{p}^a  \varepsilon_{2pq}^c  \sqrt{\varepsilon_{q}}^d \sqrt{\varepsilon_{pq}}^d
			\sqrt{ \varepsilon_{2p}}^d,$$

			Since  by Lemma   \ref{lm expressions of units under cond 3}, $ \varepsilon_{2pq}$ is a square in $\KK$,   then we can put  
			$$\xi^2=\varepsilon_{2}^a\varepsilon_{p}^a     \sqrt{\varepsilon_{q}}^d \sqrt{\varepsilon_{pq}}^d
			\sqrt{ \varepsilon_{2p}}^d,$$
			where $ a+ud+d\equiv 0 \pmod 2$. Which completes the proof. 		  	 		
			
		\end{enumerate}
	\end{proof}

	\begin{theorem}\label{T_3_1_C8} Let $p\equiv 1\pmod{8}$ and $q\equiv3\pmod 8$ be two primes such that     $\genfrac(){}{0}{p}{q} =1$. 
		Put     $\KK=\QQ(\sqrt 2, \sqrt{p}, \sqrt{q} )$.  Assume furthermore that $2p(x+1)$ and $ p(v-1)$ are squares in $\mathbb{N}$, where $x$ and $v$ are defined in Lemma \ref{lm expressions of units under cond 3}.
		Then
		\begin{enumerate}[\rm 1.]
			\item Assume that $N(\varepsilon_{2p})=-1$.  We have
			\begin{enumerate}[\rm $\bullet$]
				
				\item The unit group of $\KK$ is :
				$$E_{\KK}=\langle -1,   \varepsilon_{2}, \varepsilon_{p} ,     \sqrt{\varepsilon_{2q}},\sqrt{ \varepsilon_{pq}} ,\sqrt{ \varepsilon_{2pq}}, \sqrt{\varepsilon_{2}\varepsilon_{p}\varepsilon_{2p}},\sqrt{\varepsilon_{p}^{\alpha}  \sqrt{\varepsilon_{q}}^{1+\gamma} \sqrt{\varepsilon_{pq}\varepsilon_{2pq}}^{\alpha}} \rangle,$$
				where $\alpha$, $\gamma\in \{0,1\}$ are such that $\alpha\not=\gamma$ and $\alpha =1$ if and only if   $\varepsilon_{p}   \sqrt{\varepsilon_{q}}  \sqrt{\varepsilon_{pq}\varepsilon_{2pq}} $ is a square in $\KK$
				\item  The $2$-class number  of  $\KK$ equals $ \frac{1}{2^{4-\alpha}} h_2(2p)h_2(pq)h_2(2pq)$.  
			\end{enumerate}
			\item  Assume that $N(\varepsilon_{2p})=1$ and let $a\in\{0,1\}$ be such that $a \equiv 1+u\pmod2$.   We have
			
			\begin{enumerate}[\rm $\bullet$]
				\item  The unit group of $\KK$ is :
				$$E_{\KK}=\langle -1,   \varepsilon_{2}, \varepsilon_{p},    \sqrt{\varepsilon_{2q}},  \sqrt{\varepsilon_{pq}} ,\sqrt{ \varepsilon_{2pq}}, \sqrt{\varepsilon_{2p}},   \sqrt{\varepsilon_{p}^\alpha     \sqrt{\varepsilon_{q}}^{1+\gamma}\sqrt{\varepsilon_{pq}\varepsilon_{2pq}}^\alpha }  
				\rangle$$
				where $\alpha$, $\gamma\in \{0,1\}$ are such that $\alpha\not=\gamma$ and $\alpha =1$ if and only if  $\varepsilon_{p}      \sqrt{\varepsilon_{q}} \sqrt{\varepsilon_{pq}\varepsilon_{2pq}} $ is a square in $\KK$.
				\item  The $2$-class number  of  $\KK$ equals $ \frac{1}{2^{4-\alpha}} h_2(2p)h_2(pq)h_2(2pq)$.
			\end{enumerate}
		\end{enumerate}
	\end{theorem}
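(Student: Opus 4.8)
The plan is to run the same Wada-type argument as in the earlier theorems of this section, in particular Theorem~\ref{T_3_-1}: first fix fundamental systems of units of $k_1,k_2,k_3$, then determine which elements of $E_{k_1}E_{k_2}E_{k_3}$ become squares in $\KK$, and finally read the $2$-class number off from the unit index via \eqref{classnumberofKK}. By Lemma~\ref{lm expressions of units under cond 1 eps_q, 2q} a fundamental system of $k_2$ is $\{\varepsilon_2,\sqrt{\varepsilon_q},\sqrt{\varepsilon_{2q}}\}$, and by Lemma~\ref{units of k1} that of $k_1$ is $\{\varepsilon_2,\varepsilon_p,\sqrt{\varepsilon_2\varepsilon_p\varepsilon_{2p}}\}$ or $\{\varepsilon_2,\varepsilon_p,\sqrt{\varepsilon_{2p}}\}$ according to the sign of $N(\varepsilon_{2p})$. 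The key preliminary point is $k_3=\QQ(\sqrt2,\sqrt{pq})$: since $2p(x+1)$ and $p(v-1)$ are squares, Lemma~\ref{lm expressions of units under cond 3} gives $\sqrt{\varepsilon_{2pq}}=y_1\sqrt p+\tfrac{y_2}{2}\sqrt{2q}\in k_6$ and $\sqrt{\varepsilon_{pq}}=\tfrac{w_1}{2}\sqrt{2p}+\tfrac{w_2}{2}\sqrt{2q}$, so neither $\varepsilon_{pq}$ nor $\varepsilon_{2pq}$ is a square in $k_3$, whereas multiplying these two expansions shows $\sqrt{\varepsilon_{pq}\varepsilon_{2pq}}\in k_3$. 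Hence $\{\varepsilon_2,\varepsilon_{pq},\sqrt{\varepsilon_{pq}\varepsilon_{2pq}}\}$ is a fundamental system of $k_3$, and I note for later that the same expansions already place $\sqrt{\varepsilon_{pq}}$ and $\sqrt{\varepsilon_{2pq}}$ inside $\KK$.

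I would then write $\xi^2=\varepsilon_2^a\varepsilon_p^b\varepsilon_{pq}^c\sqrt{\varepsilon_q}^d\sqrt{\varepsilon_{2q}}^e\sqrt{\varepsilon_{pq}\varepsilon_{2pq}}^f\,W^g$ with $W=\sqrt{\varepsilon_2\varepsilon_p\varepsilon_{2p}}$ (resp. $W=\sqrt{\varepsilon_{2p}}$) and apply the norm maps $N_{\KK/k_2}$, $N_{\KK/k_5}$, $N_{\KK/k_6}$, $N_{\KK/k_3}$, $N_{\KK/k_4}$ exactly as in Theorem~\ref{T_3_-1}. The values of $\sqrt{\varepsilon_{pq}\varepsilon_{2pq}}^{\,1+\sigma}$ are obtained by multiplying the two relevant rows of Table~\ref{tab3}, and in the case $N(\varepsilon_{2p})=1$ the values of $\sqrt{\varepsilon_{2p}}^{\,1+\sigma}$ are those of \eqref{T_3_-1_eqi2p_N=1}. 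The cancellations needed to extract each congruence are supplied by squareness facts coming from the same explicit square roots: $\varepsilon_q\varepsilon_{2pq}$ is a square in $k_5$, $\varepsilon_{2pq}$ is a square in $k_6$, $\varepsilon_q\varepsilon_{pq}$ is a square in $k_4$ and $\varepsilon_{pq}\varepsilon_{2pq}$ is a square in $k_3$. Solving the resulting $\mathbb{F}_2$-linear system yields $a=e=g=0$ and $b=d=f$, so that only $b$ and $c$ remain free.

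Because $p(v-1)$ is a square, $\sqrt{\varepsilon_{pq}}\in\KK$ and $\sqrt{\varepsilon_{2pq}}=\sqrt{\varepsilon_{pq}\varepsilon_{2pq}}/\sqrt{\varepsilon_{pq}}\in\KK$, whence $\varepsilon_{pq}^c$ is absorbed and the whole question reduces to the single relation $\xi^2=(\varepsilon_p\sqrt{\varepsilon_q}\sqrt{\varepsilon_{pq}\varepsilon_{2pq}})^b$. Putting $\alpha=1$ exactly when $\varepsilon_p\sqrt{\varepsilon_q}\sqrt{\varepsilon_{pq}\varepsilon_{2pq}}$ is a square in $\KK$, and choosing $\gamma$ with $\alpha\neq\gamma$ (so that $\sqrt{\varepsilon_q}$ is recovered from the last generator when $\alpha=0$), gives the stated generating sets in both items, with $\sqrt{\varepsilon_2\varepsilon_p\varepsilon_{2p}}$ resp. $\sqrt{\varepsilon_{2p}}$ as the remaining independent generator. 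Counting the square-root generators gives $q(\KK)=2^{5+\alpha}$, and \eqref{classnumberofKK} then yields $h_2(\KK)=\tfrac{1}{2^{4-\alpha}}h_2(2p)h_2(pq)h_2(2pq)$.

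The delicate point is the case $N(\varepsilon_{2p})=1$, where a priori $\sqrt{\varepsilon_{2p}}$ might combine with the other square roots — as it does in Theorem~\ref{T_3_1_C7} — which would make the integer $u$ of Lemma~\ref{lm noms esp_2p} appear in the last generator. The heart of the matter is to show that $g=0$ is forced regardless of the exact value of $q(k_5)$: $N_{\KK/k_5}$ forces $d+f+g\equiv0\pmod2$ while $N_{\KK/k_4}$ forces $d=f$, whence $g\equiv0\pmod2$. Consequently $\sqrt{\varepsilon_{2p}}$ stays an independent generator and the exceptional unit coincides with the one found for $N(\varepsilon_{2p})=-1$, so that neither $u$ nor a factor $\varepsilon_2^a\varepsilon_p^a$ survives in the final answer.
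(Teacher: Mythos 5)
Your proposal is correct and follows essentially the same route as the paper's proof: the same fundamental systems for $k_1,k_2,k_3$ (with $\{\varepsilon_2,\varepsilon_{pq},\sqrt{\varepsilon_{pq}\varepsilon_{2pq}}\}$ for $k_3$), the same norm-map eliminations reducing the candidate $\xi^2$ to $(\varepsilon_p\sqrt{\varepsilon_q}\sqrt{\varepsilon_{pq}\varepsilon_{2pq}})^b$ up to squares, and the $2$-class number read off from $q(\KK)=2^{5+\alpha}$ via \eqref{classnumberofKK}. The only cosmetic difference is that you extract $d=f$ (hence $g=0$) directly from $N_{\KK/k_4}$, whereas the paper combines the sign congruences coming from $N_{\KK/k_2}$, $N_{\KK/k_5}$ and $N_{\KK/k_6}$; both derivations are valid.
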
  	
	\begin{proof} 	\begin{enumerate}[\rm 1.] 
			\item  Assume that   $N(\varepsilon_{2p})=-1$.
			Note that $\{\varepsilon_{2}, \varepsilon_{p},	\sqrt{\varepsilon_{2}\varepsilon_{p}\varepsilon_{2p}}\}$, is  a fundamental system  of units of $k_1$.
			Using 	Lemma \ref{lm expressions of units under cond 3}, we show that $\{\varepsilon_{2}, \sqrt{\varepsilon_{q}}, \sqrt{\varepsilon_{2q}}\}$ and  $\{ \varepsilon_{2}, 	 \varepsilon_{pq},\sqrt{\varepsilon_{pq}\varepsilon_{2pq}}\}$ are respectively fundamental systems of units of $k_2$ and $k_3$.
			It follows that, 
			$$E_{k_1}E_{k_2}E_{k_3}=\langle-1,  \varepsilon_{2}, \varepsilon_{p} ,   \varepsilon_{pq} ,   \sqrt{\varepsilon_{q}}, \sqrt{\varepsilon_{2q}} ,\sqrt{ \varepsilon_{pq}\varepsilon_{2pq}}, \sqrt{\varepsilon_{2}\varepsilon_{p}\varepsilon_{2p}}\rangle.$$	
			
			Let  $\xi$ be an element of $\KK$ which is the  square root of an element of $E_{k_1}E_{k_2}E_{k_3}$. Therefore, we can assume that
			$$\xi^2=\varepsilon_{2}^a\varepsilon_{p}^b  \varepsilon_{pq}^c  \sqrt{\varepsilon_{q}}^d\sqrt{\varepsilon_{2q}}^e\sqrt{\varepsilon_{pq}\varepsilon_{2pq}}^f
			\sqrt{\varepsilon_{2}\varepsilon_{p}\varepsilon_{2p}}^g,$$
			where $a, b, c, d, e, f$ and $g$ are in $\{0, 1\}$.	
			
			\noindent\ding{224}  Let us start	by applying   the norm map $N_{\KK/k_2}=1+\tau_2$. By Tables \ref{norm q=3 p=1mod4q}  and \ref{tab3}  we have 
			\begin{eqnarray*}
				N_{\KK/k_2}(\xi^2)&=&
				\varepsilon_{2}^{2a}\cdot (-1)^b \cdot1\cdot \varepsilon_{q}^d\cdot \varepsilon_{2q}^e\cdot(-1)^{f}  \cdot (-1)^{gu}\varepsilon_{2}^g\\\
				&=&	\varepsilon_{2}^{2a}  \varepsilon_{q}^c\varepsilon_{2q}^d\cdot(-1)^{b +f+gu}\varepsilon_{2}^g .
			\end{eqnarray*}
			Thus, $ b + f+gu\equiv 0 \pmod 2$ and $g=0$. Therefore, $b=f$ and 
			$$\xi^2=\varepsilon_{2}^a\varepsilon_{p}^b  \varepsilon_{pq}^c  \sqrt{\varepsilon_{q}}^d\sqrt{\varepsilon_{2q}}^e\sqrt{\varepsilon_{pq}\varepsilon_{2pq}}^b,$$

			\noindent\ding{224} Let us apply the norm $N_{\KK/k_5}=1+\tau_1\tau_2$, with $k_5=\QQ(\sqrt{q}, \sqrt{2p})$.  Tables \ref{norm q=3 p=1mod4q}  and \ref{tab3} give:
			\begin{eqnarray*}
				N_{\KK/k_5}(\xi^2)&=&(-1)^a\cdot(-1)^b\cdot 1\cdot   (-1)^d\cdot \varepsilon_{  q}^d\cdot1 \cdot   \varepsilon_{2pq}^b\\
				&=&	      (-1)^{a+b +d  }\varepsilon_{  q}^d\varepsilon_{2pq}^b.
			\end{eqnarray*}
			Thus, $a+b +d=0\pmod8$ et  $   d=b$. Therefore,   $a=0$ and 
			$$\xi^2= \varepsilon_{p}^b  \varepsilon_{pq}^c  \sqrt{\varepsilon_{q}}^b\sqrt{\varepsilon_{2q}}^e\sqrt{\varepsilon_{pq}\varepsilon_{2pq}}^b.$$

			\noindent\ding{224} Let us apply the norm $N_{\KK/k_6}=1+\tau_1\tau_3$, with $k_6=\QQ(\sqrt{p}, \sqrt{2q})$. So by Tables \ref{norm q=3 p=1mod4q}  and \ref{tab3}  we have %We 
			\begin{eqnarray*}
				N_{\KK/k_6}(\xi^2)&=&   \varepsilon_{p}^{2b} \cdot 1\cdot1\cdot  
				(-1)^{ e}  \cdot \varepsilon_{2q}^e\cdot  \varepsilon_{2pq}^{b}\\
				&=&	 \varepsilon_{p}^{2b} \varepsilon_{2pq}^{b} (-1)^{e }\varepsilon_{2  q}^e .
			\end{eqnarray*}
			So $e =0$ and therefore, 	 
			$$\xi^2= \varepsilon_{p}^b  \varepsilon_{pq}^c  \sqrt{\varepsilon_{q}}^b \sqrt{\varepsilon_{pq}\varepsilon_{2pq}}^b.$$
			As $\varepsilon_{pq}$ is a square in $\KK$,  then we can put $\xi^2= \varepsilon_{p}^b    \sqrt{\varepsilon_{q}}^b \sqrt{\varepsilon_{pq}\varepsilon_{2pq}}^b$.  By applying $1+\tau_2\tau_3$,  $1+ \tau_3$ and $1+\tau_1 $, we deduce thing new.	So the first item.
			\item In this case we have: $$E_{k_1}E_{k_2}E_{k_3}=\langle-1,  \varepsilon_{2}, \varepsilon_{p} ,   \varepsilon_{pq} ,   \sqrt{\varepsilon_{q}}, \sqrt{\varepsilon_{2q}} ,\sqrt{ \varepsilon_{pq}\varepsilon_{2pq}}, \sqrt{ \varepsilon_{2p}}\rangle.$$	
			
			Let  $\xi$ be an element of $\KK$ which is the  square root of an element of $E_{k_1}E_{k_2}E_{k_3}$. Therefore, we can assume that
			$$\xi^2=\varepsilon_{2}^a\varepsilon_{p}^b  \varepsilon_{pq}^c  \sqrt{\varepsilon_{q}}^d\sqrt{\varepsilon_{2q}}^e\sqrt{\varepsilon_{pq}\varepsilon_{2pq}}^f
			\sqrt{\varepsilon_{2p}}^g,$$
			where $a, b, c, d, e, f$ and $g$ are in $\{0, 1\}$.	
			
			\noindent\ding{224}  Let us start	by applying   the norm map $N_{\KK/k_2}=1+\tau_2$. We have:
			\begin{eqnarray*}
				N_{\KK/k_2}(\xi^2)&=&
				\varepsilon_{2}^{2a}\cdot (-1)^b \cdot1\cdot \varepsilon_{q}^d\cdot \varepsilon_{2q}^e\cdot(-1)^f  \cdot (-1)^{gu}\\\
				&=&	\varepsilon_{2}^{2a}  \varepsilon_{q}^c\varepsilon_{2q}^d\cdot(-1)^{b+f +gu}  .
			\end{eqnarray*}
			Thus, $ b +f+gu\equiv 0 \pmod 2$.

			\noindent\ding{224} Let us apply the norm $N_{\KK/k_5}=1+\tau_1\tau_2$, with $k_5=\QQ(\sqrt{q}, \sqrt{2p})$. We have 
			%	\begin{eqnarray*}
			%	N_{\KK/k_5}(\xi^2)&=&(-1)^a\cdot (-1)^b\cdot 1\cdot   (-1)^d\cdot \varepsilon_{  %q}^d\cdot1 \cdot (-1)^f\cdot \varepsilon_{2pq}^f\cdot (-1)^g\cdot\varepsilon_{2p}^g. %\\
			%		&=&	 (-1)^{a+b +d +f+g}\varepsilon_{  q}^d\varepsilon_{2pq}^f \varepsilon_{2p}^g.
			%	\end{eqnarray*}
			%	Thus,   $a+b +d +f+g\equiv 0 \pmod 2$ and   $ d +f+g\equiv 0 \pmod 2$. Therefore,  $a=b$ %and
			%	$$\xi^2=\varepsilon_{2}^a\varepsilon_{p}^a  \varepsilon_{pq}^c  %\sqrt{\varepsilon_{q}}^d\sqrt{\varepsilon_{2q}}^e\sqrt{\varepsilon_{pq}\varepsilon_{2pq}}^%f  \sqrt{\varepsilon_{2p}}^g.$$
			\begin{eqnarray*}
				N_{\KK/k_5}(\xi^2)&=&(-1)^a\cdot (-1)^b\cdot 1\cdot   (-1)^d\cdot \varepsilon_{  q}^d\cdot1 \cdot \varepsilon_{2pq}^f\cdot (-1)^g\cdot\varepsilon_{2p}^g. \\
				&=&	 (-1)^{a+b +d  +g}\varepsilon_{  q}^d\varepsilon_{2pq}^f \varepsilon_{2p}^g.
			\end{eqnarray*}
			Thus,   $a+b +d  +g\equiv 0 \pmod 2$ and   $ d +f+g\equiv 0 \pmod 2$.  
			% $$\xi^2=\varepsilon_{2}^a\varepsilon_{p}^b  \varepsilon_{pq}^c  \sqrt{\varepsilon_{q}}^d\sqrt{\varepsilon_{2q}}^e\sqrt{\varepsilon_{pq}\varepsilon_{2pq}}^f  \sqrt{\varepsilon_{2p}}^g.$$

			\noindent\ding{224} Let us apply the norm $N_{\KK/k_6}=1+\tau_1\tau_3$, with $k_6=\QQ(\sqrt{p}, \sqrt{2q})$. We have 
			\begin{eqnarray*}
				N_{\KK/k_6}(\xi^2)&=& (-1)^a\cdot \varepsilon_{p}^{2b} \cdot 1\cdot 1  \cdot (-1)^e\cdot \varepsilon_{2  q}^e\cdot    \varepsilon_{2pq}^f\cdot (-1)^{ug+g}\\
				&=&	\varepsilon_{p}^{2b}\varepsilon_{2pq}^f (-1)^{a+e+ug+g}\varepsilon_{2  q}^e .
			\end{eqnarray*}
			Thus, $ a+e +ug+g\equiv 0 \pmod 2$ and $e=0$. Then, $ a+ug+g\equiv 0 \pmod 2$. Since $ b +f+gu\equiv 0 \pmod 2$ and $a+b +d  +g\equiv b +d  +ug     \equiv 0 \pmod 2$, this implies that $f=d$.  The equality $ d +f+g\equiv 0 \pmod 2$ gives $g=0$. Thus, $a=0$ and $b=f$. Therefore,
			$$\xi^2= \varepsilon_{p}^f  \varepsilon_{pq}^c  \sqrt{\varepsilon_{q}}^f \sqrt{\varepsilon_{pq}\varepsilon_{2pq}}^f   .$$
			
			As $\varepsilon_{pq}$ is a square in $\KK$,  then we can put  
			$\xi^2=\varepsilon_{p}^f     \sqrt{\varepsilon_{q}}^f \sqrt{\varepsilon_{pq}\varepsilon_{2pq}}^f $. 
			%	where $ a+ug+g\equiv 0 \pmod 2$. Which completes the proof. 	
		\end{enumerate}
	\end{proof}

	\begin{theorem}\label{T_3_1_C9}Let $p\equiv 1\pmod{8}$ and $q\equiv3\pmod 8$ be two primes such that     $\genfrac(){}{0}{p}{q} =1$. 
		Put     $\KK=\QQ(\sqrt 2, \sqrt{p}, \sqrt{q} )$.  Assume furthermore that $2p(x+1)$ and $ 2p(v+1)$ are squares in $\mathbb{N}$, where $x$ and $v$ are defined in Lemma \ref{lm expressions of units under cond 3}.
		Then
		\begin{enumerate}[\rm 1.]
			\item Assume that $N(\varepsilon_{2p})=-1$.  We have
			\begin{enumerate}[\rm $\bullet$]
				
				\item The unit group of $\KK$ is :
				$$E_{\KK}=\langle -1,   \varepsilon_{2}, \varepsilon_{p} ,    \sqrt{\varepsilon_{q}}, \sqrt{\varepsilon_{2q}},\sqrt{ \varepsilon_{pq}} ,\sqrt{ \varepsilon_{2pq}}, \sqrt{\varepsilon_{2}\varepsilon_{p}\varepsilon_{2p}} \rangle.$$
				\item  The $2$-class number  of  $\KK$ equals $ \frac{1}{2^{4}} h_2(2p)h_2(pq)h_2(2pq)$.  
			\end{enumerate}
			\item  Assume that $N(\varepsilon_{2p})=1$ and let $a\in\{0,1\}$ be such that $a \equiv 1+u\pmod2$.   We have
			
			\begin{enumerate}[\rm $\bullet$]
				\item  The unit group of $\KK$ is :
				$$E_{\KK}=\langle -1,   \varepsilon_{2}, \varepsilon_{p},   \sqrt{\varepsilon_{q}}, \sqrt{\varepsilon_{2q}},  \sqrt{\varepsilon_{pq}} ,\sqrt{ \varepsilon_{2pq}},  \sqrt{\varepsilon_{2}^{a\alpha}\varepsilon_{p}^{  u\alpha}     \sqrt{\varepsilon_{pq}\varepsilon_{2pq}}^{ \alpha}
					\sqrt{\varepsilon_{2p}}^{1+\gamma}}  
				\rangle$$
				where $\alpha$, $\gamma\in \{0,1\}$ are such that $\alpha\not=\gamma$ and $\alpha =1$ if and only if  $\varepsilon_{2}^{a }\varepsilon_{p}^{  u }     \sqrt{\varepsilon_{pq}\varepsilon_{2pq}} 
				\sqrt{\varepsilon_{2p}} $ is a square in $\KK$.
				\item  The $2$-class number  of  $\KK$ equals $ \frac{1}{2^{4-\alpha}} h_2(2p)h_2(pq)h_2(2pq)$.
			\end{enumerate}
		\end{enumerate}
	\end{theorem}  	
	\begin{proof} 	\begin{enumerate}[\rm 1.] 
			\item  Assume that   $N(\varepsilon_{2p})=-1$.
			Note that $\{\varepsilon_{2}, \varepsilon_{p},	\sqrt{\varepsilon_{2}\varepsilon_{p}\varepsilon_{2p}}\}$, is  a fundamental system  of units of $k_1$.
			Using 	Lemma \ref{lm expressions of units under cond 3}, we show that $\{\varepsilon_{2}, \sqrt{\varepsilon_{q}}, \sqrt{\varepsilon_{2q}}\}$ and  $\{ \varepsilon_{2}, 	 \varepsilon_{pq},\sqrt{\varepsilon_{pq}\varepsilon_{2pq}}\}$ are respectively fundamental systems of units of $k_2$ and $k_3$.
			It follows that, 
			$$E_{k_1}E_{k_2}E_{k_3}=\langle-1,  \varepsilon_{2}, \varepsilon_{p} ,   \varepsilon_{pq} ,   \sqrt{\varepsilon_{q}}, \sqrt{\varepsilon_{2q}} ,\sqrt{ \varepsilon_{pq}\varepsilon_{2pq}}, \sqrt{\varepsilon_{2}\varepsilon_{p}\varepsilon_{2p}}\rangle.$$	
			
			Let  $\xi$ be an element of $\KK$ which is the  square root of an element of $E_{k_1}E_{k_2}E_{k_3}$. Therefore, we can assume that
			$$\xi^2=\varepsilon_{2}^a\varepsilon_{p}^b  \varepsilon_{pq}^c  \sqrt{\varepsilon_{q}}^d\sqrt{\varepsilon_{2q}}^e\sqrt{\varepsilon_{pq}\varepsilon_{2pq}}^f
			\sqrt{\varepsilon_{2}\varepsilon_{p}\varepsilon_{2p}}^g,$$
			where $a, b, c, d, e, f$ and $g$ are in $\{0, 1\}$.	
			
			\noindent\ding{224}  Let us start	by applying   the norm map $N_{\KK/k_2}=1+\tau_2$. By Tables \ref{norm q=3 p=1mod4q}  and \ref{tab3}  we have 
			\begin{eqnarray*}
				N_{\KK/k_2}(\xi^2)&=&
				\varepsilon_{2}^{2a}\cdot (-1)^b \cdot1\cdot \varepsilon_{q}^d\cdot \varepsilon_{2q}^e\cdot1  \cdot (-1)^{gu}\varepsilon_{2}^g\\\
				&=&	\varepsilon_{2}^{2a}  \varepsilon_{q}^c\varepsilon_{2q}^d\cdot(-1)^{b +f+gu}\varepsilon_{2}^g .
			\end{eqnarray*}
			Thus, $ b +  gu\equiv 0 \pmod 2$ and $g=0$. Therefore, $b=0$ and 
			$$\xi^2=\varepsilon_{2}^a  \varepsilon_{pq}^c  \sqrt{\varepsilon_{q}}^d\sqrt{\varepsilon_{2q}}^e\sqrt{\varepsilon_{pq}\varepsilon_{2pq}}^f ,$$

			\noindent\ding{224} Let us apply the norm $N_{\KK/k_5}=1+\tau_1\tau_2$, with $k_5=\QQ(\sqrt{q}, \sqrt{2p})$.  Tables \ref{norm q=3 p=1mod4q}  and \ref{tab3} give:
			\begin{eqnarray*}
				N_{\KK/k_5}(\xi^2)&=&(-1)^a\cdot \varepsilon_{p}^{2b}\cdot 1\cdot   (-1)^d\cdot \varepsilon_{  q}^d\cdot1 \cdot   \varepsilon_{2pq}^f\\
				&=&	  \varepsilon_{p}^{2b}    (-1)^{a +d  }\varepsilon_{  q}^d\varepsilon_{2pq}^f.
			\end{eqnarray*}
			Thus,   $a  =  d=f$. Therefore,   
			$$\xi^2=\varepsilon_{2}^a  \varepsilon_{pq}^c  \sqrt{\varepsilon_{q}}^a\sqrt{\varepsilon_{2q}}^e\sqrt{\varepsilon_{pq}\varepsilon_{2pq}}^a .$$

			\noindent\ding{224} Let us apply the norm $N_{\KK/k_6}=1+\tau_1\tau_3$, with $k_6=\QQ(\sqrt{p}, \sqrt{2q})$. So by Tables \ref{norm q=3 p=1mod4q}  and \ref{tab3}  we have %We 
			\begin{eqnarray*}
				N_{\KK/k_5}(\xi^2)&=& (-1)^a\cdot \varepsilon_{p}^{2b} \cdot 1\cdot1\cdot  
				(-1)^{ e}  \cdot \varepsilon_{2q}^e\cdot  \varepsilon_{2pq}^{a}\\
				&=&	 \varepsilon_{p}^{2b} \varepsilon_{2pq}^{a} (-1)^{e+b}\varepsilon_{2  q}^e .
			\end{eqnarray*}
			So $a=e=0$. Thus, 	 
			$$\xi^2=    \varepsilon_{ pq}^c    	.$$
			So the first item.
			\item In this case we have: $$E_{k_1}E_{k_2}E_{k_3}=\langle-1,  \varepsilon_{2}, \varepsilon_{p} ,   \varepsilon_{pq} ,   \sqrt{\varepsilon_{q}}, \sqrt{\varepsilon_{2q}} ,\sqrt{ \varepsilon_{pq}\varepsilon_{2pq}}, \sqrt{ \varepsilon_{2p}}\rangle.$$	
			
			Let  $\xi$ be an element of $\KK$ which is the  square root of an element of $E_{k_1}E_{k_2}E_{k_3}$. Thus, we can assume that
			$$\xi^2=\varepsilon_{2}^a\varepsilon_{p}^b  \varepsilon_{pq}^c  \sqrt{\varepsilon_{q}}^d\sqrt{\varepsilon_{2q}}^e\sqrt{\varepsilon_{pq}\varepsilon_{2pq}}^f
			\sqrt{\varepsilon_{2p}}^g,$$
			where $a, b, c, d, e, f$ and $g$ are in $\{0, 1\}$.	
			
			\noindent\ding{224}  Let us start	by applying   the norm map $N_{\KK/k_2}=1+\tau_2$. We have:
			\begin{eqnarray*}
				N_{\KK/k_2}(\xi^2)&=&
				\varepsilon_{2}^{2a}\cdot (-1)^b \cdot1\cdot \varepsilon_{q}^d\cdot \varepsilon_{2q}^e\cdot1  \cdot (-1)^{gu}\\\
				&=&	\varepsilon_{2}^{2a}  \varepsilon_{q}^c\varepsilon_{2q}^d\cdot(-1)^{b +gu}  .
			\end{eqnarray*}
			Thus, $ b +gu\equiv 0 \pmod 2$.

			\noindent\ding{224} Let us apply the norm $N_{\KK/k_5}=1+\tau_1\tau_2$, with $k_5=\QQ(\sqrt{q}, \sqrt{2p})$. We have 
			\begin{eqnarray*}
				N_{\KK/k_5}(\xi^2)&=&(-1)^a\cdot (-1)^b\cdot 1\cdot   (-1)^d\cdot \varepsilon_{  q}^d\cdot1 \cdot \varepsilon_{2pq}^f\cdot (-1)^g\cdot\varepsilon_{2p}^g. \\
				&=&	 (-1)^{a+b +d+g}\varepsilon_{  q}^d\varepsilon_{2pq}^f \varepsilon_{2p}^g.
			\end{eqnarray*}
			Thus,   $a+b +d +g\equiv 0 \pmod 2$ and   $ d +f+g\equiv 0 \pmod 2$.

			\noindent\ding{224} Let us apply the norm $N_{\KK/k_6}=1+\tau_1\tau_3$, with $k_6=\QQ(\sqrt{p}, \sqrt{2q})$. We have 
			\begin{eqnarray*}
				N_{\KK/k_6}(\xi^2)&=& (-1)^a\cdot \varepsilon_{p}^{2b} \cdot 1\cdot 1  \cdot (-1)^e\cdot \varepsilon_{2  q}^e\cdot    \varepsilon_{2pq}^f\cdot (-1)^{ug+g}\\
				&=&	\varepsilon_{p}^{2b}\varepsilon_{2pq}^f (-1)^{a+e+ug+g}\varepsilon_{2  q}^e .
			\end{eqnarray*}
			Thus, $ a+e +ug+g\equiv 0 \pmod 2$ and $e=0$. Then, $ a+ug+g\equiv 0 \pmod 2$. 
			Since $ b +gu\equiv 0 \pmod 2$ and  $ a+ug+g\equiv 0 \pmod 2$, this implies 
			that $a+b+g\equiv 0 \pmod 2$.  As $a+b +d +g\equiv 0 \pmod 2$
			(resp. $ d +f+g\equiv 0 \pmod 2$), then $d=0$ (resp. $f=g$). Therefore,
			$$\xi^2=\varepsilon_{2}^a\varepsilon_{p}^{gu}  \varepsilon_{pq}^c  \sqrt{\varepsilon_{pq}\varepsilon_{2pq}}^g
			\sqrt{\varepsilon_{2p}}^g,$$
			with  $ a+ug+g\equiv 0 \pmod 2$. By applying the other norms we deduce nothing new. So we have the second item.
		\end{enumerate}
	\end{proof}

	\subsection{\bf The cases: $p\equiv 5$ or $3 \pmod8$  and $q\equiv 3 \pmod4$}$\,$\\
	The following theorem provide some families with odd class number and explicit unit groups.
	\begin{theorem}\label{easy cases} Let $p$ and $q$ be two primes.    
		Put     $\KK=\QQ(\sqrt 2, \sqrt{p}, \sqrt{q} )$. Then
		\begin{enumerate}[\rm 1.]
			\item If $p\equiv 3\pmod 8$,  $q\equiv7\pmod 8 $                and    $\left(\dfrac{p}{q}\right)=1 $,
			$$E_{\KK}=\langle-1,\varepsilon_{  2},    \sqrt{\varepsilon_{   q}}, \sqrt{\varepsilon_{   2q}},  \sqrt{\varepsilon_{   p}}, 
			%\sqrt{\varepsilon_{   2p}},   \sqrt{\varepsilon_{pq} } ,
			\sqrt{ \varepsilon_{2pq}},
			\sqrt[4]{ {\varepsilon_{   2q}}    {\varepsilon_{pq}\varepsilon_{2pq}} },
			\sqrt[4]{ \varepsilon_{  2}^2   {\varepsilon_{   q}} {\varepsilon_{   2q}}  {\varepsilon_{   p}} {\varepsilon_{   2p}} } 
			\;\rangle.$$
			\item If $p\equiv 3\pmod 8$,  $q\equiv7\pmod 8 $                and    $\left(\dfrac{p}{q}\right)=-1 $,
			$$E_{\KK}=\langle-1, \varepsilon_{  2}  ,   \sqrt{\varepsilon_{   q}}, \sqrt{\varepsilon_{   2p}},   \sqrt{\varepsilon_{pq} }, \sqrt{ \varepsilon_{2pq}}, 
			\sqrt[4]{ {\varepsilon_{q}}    {\varepsilon_{   p}}   {\varepsilon_{   2p}}    {\varepsilon_{pq}\varepsilon_{2pq}}},\sqrt[4]{\varepsilon_{  2}^2     {\varepsilon_{   2q}}    {\varepsilon_{pq}\varepsilon_{2pq}} }  \; \rangle.$$
			
			\item If  $p\equiv 5\pmod 8$, $q\equiv7\pmod 8$  and    $\left(\dfrac{p}{q}\right)=1 $,
			$$E_{\KK}=\langle-1,   \varepsilon_{2}, \varepsilon_{p},  \sqrt{\varepsilon_{q}}, \sqrt{\varepsilon_{2q}},
			\sqrt{ \varepsilon_{pq}}, \sqrt{\varepsilon_{2}\varepsilon_{p}\varepsilon_{2p}}, 
			\sqrt[4]{    {\varepsilon_{2q}}  {\varepsilon_{pq}\varepsilon_{2pq}} }   \rangle.$$
			\item  If $p\equiv 5\pmod 8$, $q\equiv7\pmod 8$     and     $\left(\dfrac{p}{q}\right)=-1$,
			$$E_{\mathbb{K}}=\langle-1,  \varepsilon_{2},  \varepsilon_{p},  \sqrt{\varepsilon_{q}},  \sqrt{\varepsilon_{2q}},  \sqrt{\varepsilon_{pq}},   \sqrt{\varepsilon_{2}\varepsilon_{p}\varepsilon_{2p}},
			\sqrt[4]{\varepsilon_{2}^2\varepsilon_{q}\varepsilon_{pq}\varepsilon_{2pq}}\rangle.$$
			\item If $p\equiv   5\pmod 8$,  $q\equiv  3\pmod 8$	    and    $\left(\dfrac{p}{q}\right)=1$, then 
			$$E_{\KK }=\langle-1,  \varepsilon_{2}, \varepsilon_{p},  \sqrt{\varepsilon_{q}}, \sqrt{\varepsilon_{2q}},
			\sqrt{ \varepsilon_{pq}}, \sqrt{\varepsilon_{2}\varepsilon_{p}\varepsilon_{2p}},
			\sqrt[4]{   \varepsilon_{p}^2 {\varepsilon_{2q}}{\varepsilon_{pq}\varepsilon_{2pq}}   } \rangle.$$
			\item If $p\equiv   5\pmod 8$,  $q\equiv  3\pmod 8$	    and    $\left(\dfrac{p}{q}\right)=-1$,
			$$E_{\KK }=\langle-1,  \varepsilon_{2}, \varepsilon_{p},  \sqrt{\varepsilon_{q}}, \sqrt{\varepsilon_{2q}},
			\sqrt{ \varepsilon_{pq}}, \sqrt{\varepsilon_{2}\varepsilon_{p}\varepsilon_{2p}},
			\sqrt[4]{\varepsilon_{2}^2\varepsilon_{p}^2\varepsilon_{q}\varepsilon_{pq}\varepsilon_{2pq}     }   \rangle.$$ 
			\item If $p\equiv    q\equiv  3\pmod 8$, then 
			$$E_{\KK}= \langle-1, \varepsilon_{  2} ,    \sqrt{\varepsilon_{   p}},  \sqrt{\varepsilon_{   2p}} ,     \sqrt{\varepsilon_{   q}},
			\sqrt{\varepsilon_{   pq}}  ,
			\sqrt[4]{ {\varepsilon_{   p}}      {\varepsilon_{   q}}      {\varepsilon_{   2pq}} },
			\sqrt[4]{ {\varepsilon_{   2p}}       {\varepsilon_{   2q}}    {\varepsilon_{   2pq}}}  \rangle.$$
		\end{enumerate}
		Furthermore, in all the above cases the class number of $\KK$ is odd.
	\end{theorem}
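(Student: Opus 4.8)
The plan is to prove all seven cases by the uniform strategy of Theorems~\ref{T_3_-1}--\ref{T_3_1_C9}, running the same machinery separately in each congruence regime. The first step is to pin down, in each case, the norms of the relevant fundamental units from the hypotheses: when $p\equiv 3\pmod 8$ (resp. $q\equiv 3,7\pmod 8$) one has $p\equiv 3\pmod 4$ (resp. $q\equiv 3\pmod 4$), so $N(\varepsilon_p)=+1$ (resp. $N(\varepsilon_q)=+1$), and since $-1$ is then a quadratic nonresidue modulo the prime in question, the Pell equations $x^2-2py^2=-1$, $x^2-pqy^2=-1$, $x^2-2pqy^2=-1$ have no solutions; hence $N(\varepsilon_{2p})=+1$ whenever $p\equiv 3\pmod 4$, and $N(\varepsilon_{pq})=N(\varepsilon_{2pq})=+1$ in every case (as $q\equiv 3\pmod 4$ throughout), whereas $p\equiv 5\pmod 8$ forces $N(\varepsilon_p)=-1$. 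With the norms fixed, Lemma~\ref{units of k1} and analogues proved as Lemma~\ref{lm expressions of units under cond 1} and Lemma~\ref{lm expressions of units under cond 3} furnish fundamental systems of units for the three biquadratic layers $k_1,k_2,k_3$; these systems already contain square roots such as $\sqrt{\varepsilon_q},\sqrt{\varepsilon_{2q}},\sqrt{\varepsilon_{pq}}$, and it is precisely their reappearance one level up that produces the fourth roots in the statement.

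The computational engine is the Wada elimination used throughout Section~3. I would write an arbitrary candidate $\xi^2$ as a $\{0,1\}$-monomial in the generators of $E_{k_1}E_{k_2}E_{k_3}$, then apply the norm maps $N_{\KK/k}=1+\sigma$ to the biquadratic subfields $k_2,k_3,k_4,k_5,k_6$, reading the action of each $1+\tau_i$ and $1+\tau_i\tau_j$ on every generator from norm tables analogous to Tables~\ref{norm q=3 p=1mod4q} and~\ref{tab3}. The subcases $q\equiv 7\pmod 8$ require fresh such tables, since the existing ones are tailored to $q\equiv 3\pmod 8$; these are produced by the same square-root computations. Each norm map yields one linear congruence modulo $2$ among the exponents, and a generator that is not a square in the target subfield forces its exponent to vanish. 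Solving the resulting $\mathbb{F}_2$-linear system isolates the genuine squares in $\KK$; running the square-root test one further level on the surviving product gives the two-layer extractions $\sqrt[4]{\cdots}$, with the auxiliary parameters $\alpha,\gamma$ (as in Theorems~\ref{T_3_1_C2}--\ref{T_3_1_C9}) recording which of the two admissible alternatives actually occurs.

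Once $E_{\KK}$ is explicit, the unit index $q(\KK)=[E_{\KK}:\prod_{i=1}^{7}E_{k_i}]$ over the seven quadratic subfields is the absolute value of the determinant of the integer matrix expressing $\varepsilon_2,\dots,\varepsilon_{2pq}$ in the chosen basis of $E_{\KK}$, hence a power of $2$ read off directly from the listed generators. Substituting into Wada's class-number formula (Lemma~\ref{wada's f.}) with $n=3$, $v=9$ gives
\[
h_2(\KK)=\frac{1}{2^{9}}\,q(\KK)\prod_{i=1}^{7}h_2(k_i).
\]
The seven subfield $2$-class numbers are all $1$ or $2$ here: $h_2(2)=h_2(p)=h_2(q)=1$ because $p,q$ are prime, while $h_2(2p),h_2(2q),h_2(pq),h_2(2pq)$ are controlled by genus theory together with the corollaries packaged in Lemma~\ref{class numbers of quadratic field}; for example $h_2(pq)=1$ when $p\equiv q\equiv 3\pmod 4$, whereas $h_2(2pq)=2$ in that regime because its narrow genus field $\QQ(\sqrt 2,\sqrt{-p},\sqrt{-q})$ is imaginary and so does not descend to an unramified extension in the wide sense. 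Checking case by case that $q(\KK)\prod_i h_2(k_i)=2^{9}$ then yields $h_2(\KK)=1$, i.e. the class number of $\KK$ is odd.

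The main obstacle is twofold. The delicate point is the two-layer descent: unlike the single square-root extractions elsewhere in Section~3, one must first show that a product $X\in E_{k_1}E_{k_2}E_{k_3}$ is a square in $\KK$ and then that $\sqrt X$ is itself a square, which demands an extra round of norm computations and careful bookkeeping of the $\{0,1\}$-parameters to decide between the alternatives encoded by $\alpha$ and $\gamma$. The second difficulty is confirming that the listed generators exhaust $E_{\KK}$, equivalently that $q(\KK)$ is exactly the stated power of $2$ and not a proper divisor: the norm maps give only necessary conditions, so, exactly as in the determination of the exponent $f$ in the proof of Theorem~\ref{T_3_-1}, one fixes the index by an independent evaluation of $h(\KK)$—realizing $\KK$ as an unramified quadratic extension of the biquadratic subfield $k_5=\QQ(\sqrt{2p},\sqrt q)$ and comparing the two resulting expressions for $h_2(\KK)$ through Lemmas~\ref{wada's f.} and~\ref{class numbers of quadratic field}.
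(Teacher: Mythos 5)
Your proposal follows exactly the strategy the paper itself prescribes for this theorem: the paper's ``proof'' merely cites \cite{chemszekhniniaziziUnits1} and \cite{ChemsUnits9} for cases 4--6 and tells the reader to reproduce the computations of Theorem \ref{T_3_-1} for the remaining cases, which is precisely the norm-map elimination, square-root test, and Kuroda/Wada index bookkeeping you outline (including fixing the ambiguous exponents via the unramified extension $\KK/k_5$ and Lemma \ref{wada's f.}). Your sketch is in fact more detailed than the paper's own argument; the only caveat is that, like the paper, it defers the actual case-by-case tables and the verification of the subfield $2$-class numbers in the regimes $p\equiv 3,5\pmod 8$ to computations that are asserted rather than carried out.
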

	\begin{proof}
		The points 4, 5 and 6 are proved in \cite{chemszekhniniaziziUnits1} and \cite{ChemsUnits9} respectively. 
		The proof of the rest demands very long computations as above, however we suggest to the   reader  proceed as in the proof of Theorem \ref{T_3_-1} or \cite[Theorem 2.5]{ChemsUnits9} to construct a detailed prove.
	\end{proof}

	\section{\bf Some   families of Fr\"ohlich triquadratic fields whose $2$-class groups  are  of type $(2,2)$ }$\,$ 	 	
	
	Now we can give some    families   of real triquadratic number fields whose $2$-class groups are of type $(2,2)$.
	
	\begin{theorem}\label{app1}    
		Let $p\equiv 1\pmod{8}$ and $q\equiv3\pmod 8$ be two primes such that     $\genfrac(){}{0}{p}{q} =1$. 
		Put     $\KK=\QQ(\sqrt 2, \sqrt{p}, \sqrt{q} )$.    Then the $2$-class group of $\KK$ is of type $(2,2)$ in the following cases:
		\begin{enumerate}[\rm 1.]
			\item $ h_2(pq)=h_2(2pq)=2\cdot h_2(2p)=4 $ and  $(x-1)$ or ($v-1)$  is not square in $\NN$.
			\item $N(\varepsilon_{2p})=-1$, $ h_2(pq)=h_2(2pq)=   h_2(2p)=4 $ and one of the  following conditions is satisfied: 
			\begin{enumerate}[\rm a.]
				\item $(x-1)$ and $2p(v+1)$ are squares in $\NN$,
				\item $(x-1)$ and $p(v-1)$ are squares in $\NN$,
				\item	$p(x-1)$ and $(v-1)$ are squares in $\NN$,
				\item $p(x-1)$ and $p(v-1)$  are squares in $\NN$,
				\item $2p(x+1)$ and $(v-1)$  are squares in $\NN$,
				\item  $2p(x+1)$ and $2p(v+1)$  are squares in $\NN$,
			\end{enumerate}
		\end{enumerate}
		where $x$ and $v$ are defined in Lemma \ref{lm expressions of units under cond 3}.
	\end{theorem}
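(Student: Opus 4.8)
The plan is to split the claim into a class-number computation and a $2$-rank computation. For the first, I would observe that the square/non-square hypotheses on $x$ and $v$ (in the sense of Lemma~\ref{lm expressions of units under cond 3}), together with the sign of $N(\varepsilon_{2p})$, place $\KK$ in exactly one of Theorems~\ref{T_3_1_C1}--\ref{T_3_1_C9}. The sign $N(\varepsilon_{2p})$ is itself pinned down by the hypothesis on $h_2(2p)$: for $p\equiv1\pmod8$ the negative Pell equation $X^2-2pY^2=-1$ is solvable only when $4\mid h_2(2p)$, so $h_2(2p)=2$ forces $N(\varepsilon_{2p})=1$ (Case~1), while $N(\varepsilon_{2p})=-1$ is consistent with $h_2(2p)=4$ (Case~2). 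Substituting the numerical hypotheses into the class-number conclusion of the relevant theorem, which in every listed subcase has the shape
$$h_2(\KK)=\frac{1}{2^{4-s}}\,h_2(2p)\,h_2(pq)\,h_2(2pq),$$
I would get $h_2(\KK)=4$; indeed the six subcases retained in Case~2 and the configuration of Case~1 are selected precisely so that the exponent $s$ makes this product equal to $4$, the excluded configurations being exactly those where $s$ would give a different value. Thus $\mathrm{Cl}_2(\KK)$ has order $4$ and is either $\ZZ/4\ZZ$ or of type $(2,2)$.

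The crux is then to rule out $\mathrm{Cl}_2(\KK)\cong\ZZ/4\ZZ$, i.e. to prove that the $2$-rank of $\mathrm{Cl}_2(\KK)$ equals $2$. Here the hypothesis $\genfrac(){}{0}{p}{q}=1$ is essential: by Lemma~\ref{class numbers of quadratic field} it yields $4\mid h_2(pq)$ and $4\mid h_2(2pq)$, and I would feed this into the biquadratic subfields over which $\KK$ is unramified. Concretely, as in the proof of Theorem~\ref{T_3_-1}, $\KK/k_5$ is an unramified quadratic extension, so $\KK$ lies in the Hilbert $2$-class field of $k_5$ and $\mathrm{Cl}_2(\KK)$ is governed by $\mathrm{Cl}_2(k_5)$; the difference with Theorem~\ref{T_3_-1} is that there $\mathrm{Cl}_2(k_5)$ was cyclic (whence $\mathrm{Cl}_2(\KK)$ cyclic), whereas now the larger subfield class numbers make $\mathrm{Cl}_2(k_5)$ non-cyclic. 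I would therefore compute $h_2(k_5)$ (and, if needed, $h_2(k_6)$) by Kuroda's formula (Lemma~\ref{wada's f.}), determine the structure of $\mathrm{Cl}_2(k_5)$, and identify $\mathrm{Cl}_2(\KK)$ with the appropriate index-$2$ subgroup $\mathrm{Gal}(H_2(k_5)/\KK)$; alternatively, I would exhibit two independent everywhere-unramified quadratic extensions of $\KK$ obtained from the Hilbert $2$-class fields of two distinct biquadratic subfields. Either route shows the $2$-rank is $\ge2$, hence exactly $2$.

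Combining the two steps gives a group of order $4$ and $2$-rank $2$, which is of type $(2,2)$, establishing the theorem in both cases. I expect the non-cyclicity to be the real obstacle: the class-number formula alone cannot separate $\ZZ/4\ZZ$ from $(2,2)$, and since the genus field of $\KK/\QQ$ is only of degree $2$ over $\KK$, the second independent unramified layer is necessarily non-abelian over $\QQ$ and must be extracted from the finer structure of the subfield class groups, which is exactly where the condition $\genfrac(){}{0}{p}{q}=1$ and the precise list of square conditions do their work.
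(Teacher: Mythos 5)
Your overall skeleton matches the paper's: read off $h_2(\KK)$ from the class-number formulas of Theorems \ref{T_3_1_C1}--\ref{T_3_1_C9}, then rule out the cyclic group of order $4$. But there are two genuine gaps. First, in Case 1 the exponent $s$ in $h_2(\KK)=\frac{1}{2^{4-s}}h_2(2p)h_2(pq)h_2(2pq)$ is \emph{not} determined by which of those theorems applies: in every one of them the second item (the $N(\varepsilon_{2p})=1$ case, which is the one forced here since $h_2(2p)=2$) leaves $\alpha\in\{0,1\}$ open, conditioned on whether a certain explicit unit is a square in $\KK$. With $h_2(2p)=2$ and $h_2(pq)=h_2(2pq)=4$ this gives $h_2(\KK)=2^{1+\alpha}$, so your assertion that the listed configurations are ``selected precisely so that the exponent makes the product equal to $4$'' is circular --- nothing in the hypotheses selects $\alpha=1$. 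The paper closes this by proving that $h_2(\KK)=2$ is impossible, via class field theory applied to an unramified extension of a biquadratic subfield whose $2$-class group is already too large; this is a substantive extra step, not bookkeeping. (In Case 2 the relevant first items of Theorems \ref{T_3_1_C2}, \ref{T_3_1-3}, \ref{T_3_1_C4}, \ref{T_3_1_C5}, \ref{T_3_1_C7} and \ref{T_3_1_C9} do give the exponent $2^{4}$ unconditionally, so there the order computation is clean.)

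Second, and more seriously, the non-cyclicity step is only gestured at, and your primary mechanism does not work as stated. You propose to control $\mathrm{Cl}_2(\KK)$ by the index-$2$ subgroup $\mathrm{Gal}(H_2(k_5)/\KK)$ of $\mathrm{Cl}_2(k_5)$; but an index-$2$ subgroup of an abelian $2$-group of rank $2$ can perfectly well be cyclic (for instance $(2,4)$ contains two cyclic subgroups of index $2$), and what class field theory actually hands you is that $\mathrm{Gal}(H_2(k_5)/\KK)$ is a \emph{quotient} of $\mathrm{Cl}_2(\KK)$, so you would still have to prove that this particular quotient is non-cyclic --- which is the whole difficulty. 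The paper's route is more specific: it works with $k_3=\QQ(\sqrt 2,\sqrt{pq})$, shows $\mathrm{Cl}_2(k_3)\cong(2,4)$ by combining Kuroda's formula with the $2$-rank result of \cite{AM}, invokes the explicit description of the Hilbert genus field of $k_3$ from \cite{BaeYue} to produce an unramified degree-$4$ extension $\KK(\sqrt{\alpha_1^*})$ of $k_3$ containing $\KK$, and then applies the group-theoretic classification of \cite{Benja} to conclude that $\mathrm{Cl}_2(\KK)$ cannot be cyclic. Your alternative suggestion --- exhibiting two independent everywhere-unramified quadratic extensions of $\KK$ --- would indeed suffice, but you do not construct them, and their construction (via the element $\alpha_1^*$) is exactly the content that is missing. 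You correctly identify non-cyclicity as the crux, but the proposal does not overcome it.
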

	\begin{proof} We shall prove the first item and the reader can similarly prove the second one. So assume that we are in conditions of the first item. Note that by  \cite[Theorem 2]{kuvcera1995parity}, we have $N(\varepsilon_{2p})=1$. Using   Lemmas  \ref{wada's f.} and \ref{lm expressions of units under cond 3}, we get $h_2(k_3)=\frac{1}{4}q(k_3)\cdot4\cdot4=8$, where $k_3=\QQ(\sqrt 2, \sqrt{pq})$.Therefore, as by \cite{AM} the $2$-rank of the class group of $k_3$ equals $2$, the $2$-class group of $k_3$ is of type $(2,4)$.
		By the second items of Theorems  \ref{T_3_1_C2}, \ref{T_3_1-3}, \ref{T_3_1_C4}, \ref{T_3_1_C5}, \ref{T_3_1_C6}, \ref{T_3_1_C7}, \ref{T_3_1_C8} and \ref{T_3_1_C9}, we have 
		$h_2(\KK)= \frac{1}{2^{4-\alpha}} h_2(2p)h_2(pq)h_2(2pq)$, for some $\alpha \in\{0,1\}$. If we assume that $\alpha=0$, then  
		$h_2(\KK)= \frac{1}{2^{4 }}\cdot2\cdot4\cdot 4=2$. This implies that the $2$-class group of $\KK$ is cyclic, but this is impossible by class field theory and the fact that    $\KK/k_5$ is an unramified quadratic extension. Therefore, $h_2(\KK)=4$. Now, let us   show that the $2$-class group of $\KK$ is not cyclic.
		By \cite[Theorem 4.1, (iii)]{BaeYue}, $k_3$ admits three quadratic extensions $\KK_1=\KK$ and the conjugate extensions  $\KK_2:=k_3(\sqrt{\alpha_1^*})$ and $\KK_3$, all contained in 
		$\mathbb{Q}(\sqrt{2},\sqrt{p}, \sqrt{q},\sqrt{\alpha_1^*})$ which is an unramified extension of $k_3$ of degree $4$, where the $\alpha_1^*$ is (the element attached to $p$) defined in \cite[Theorem 4.1, (iii)]{BaeYue}.  So  by the group theoretic properties  given in  \cite[p. 110]{Benja} the $2$-class group of $\KK$ is not cyclic. Hence the $2$-class group of $\KK$ is of type $(2,2)$.
	\end{proof} 	
	
	%\begin{corollary} Keep the hypothesis of the previous theorem. Then,
	%	the length $2$-class field tower of the real biquadratic fields  $\mathbb Q(\sqrt{2p}, \sqrt{q})$ and  $\mathbb Q(\sqrt{2}, \sqrt{pq})$ equals at most $2$.
	% \end{corollary}  	
	%	\begin{proof}
	%	This follows from the fact that $\KK$ is an unramified quadratic extension of these  two fields.
	%	\end{proof}
	%\begin{question}
	%  Keep the hypothesis of the previous theorem.	Is the $2$-class groups $\mathbb Q(\sqrt{2p}, \sqrt{q})$ and  $\mathbb Q(\sqrt{2}, \sqrt{pq})$ of type $(2,4)$?
	% \end{question}	

	%	\begin{remark}
	%	To avoid making the article longer,  a continuation and further applications   will be published in   forthcoming papers.
	%	\end{remark} 	

	%	Let us close this paper by suggesting the following problem.
	%	\begin{problem}
	%		Let $d\not=1$ be a positive squarefree  integer (for example $d=pq$ or $ 2pq$). Many authors characterized  the small values (i.e. $2$, $4$ or $8$) of the $2$-class %number of the quadratic field $\mathbb{Q}(\sqrt{d})$ in terms of 
	%		the signs of Legendre symbols and the rational biquadratic residue symbols (e.g.  \cite{kaplan76}).   
	%		Is it possible to characterize   the small values of the $2$-class number of the quadratic field $\mathbb{Q}(\sqrt{d})$ in terms of $x$ (and $y$), where $x$, $y$ are the integers  such that
	%		$\varepsilon_m=x+y\sqrt{m}$   and $m$   iterate over the  divisors of $d$$\mathord{?} $ (or in terms of certain conditions analogous to those appearing in Theorem \ref{app1}$\mathord{?} $).
	
	%	\end{problem}  	

	\begin{remark}
		For a continuation of this work, we refer the reader to the paper \cite{CEH2024}  which is dedicated to  computation of   the unit group of the fields $ \QQ(\sqrt 2, \sqrt{p}, \sqrt{q} )$, where $p\equiv 1\pmod{8}$ and $q\equiv7\pmod 8$ are two prime numbers. 
	\end{remark}

\end{document}